\newtheorem{theorem}{Theorem}[section]
\newtheorem{corollary}[theorem]{Corollary}
\newtheorem{lemma}[theorem]{Lemma}
\newtheorem{proposition}[theorem]{Proposition}
\newtheorem{remark}[theorem]{Remark}
\begin{document}
\title[Continuity of derivations]{Continuity of derivations in algebras of locally measurable operators}
\author{A. F. Ber}
\address{Department of Mathematics,National University of Uzbekistan,
Vuzgorodok, 100174, Tashkent, Uzbekistan}
\email{ber@ucd.uz}

\author{V. I. Chilin }
\address{Department of Mathematics, National University of Uzbekistan,
Vuzgorodok, 100174, Tashkent, Uzbekistan}
\email{chilin@ucd.uz}

\author{F. A. Sukochev}
\address{School of Mathematics and Statistics, University of New South Wales, Sydney, NSW 2052, Australia }
\email{f.sukochev@unsw.edu.au}
\date{\today}
\begin{abstract}
We prove that any derivation of the $*$-algebra  $LS(\mathcal{M})$ of all locally measurable operators affiliated with a properly infinite von Neumann algebra
$\mathcal{M}$ is continuous with respect to the local measure topology $t(\mathcal{M})$. Building an extension of a derivation $\delta:\mathcal{M}\longrightarrow LS(\mathcal{M})$ up to a derivation from $LS(\mathcal{M})$ into $LS(\mathcal{M})$, it is further established that any derivation from $\mathcal{M}$ into $LS(\mathcal{M})$ is $t(\mathcal{M})$-continuous.
\end{abstract}
\maketitle

\section{Introduction}

The theory of derivations of various classes of Banach $*$-algebras (e.g.
$C^*$,  $AW^*$ and $W^*$-algebras) is very well developed (see,
for example, \cite{Brat,Sak,Sak1}). It is well known that every
derivation of a $C^*$-algebra is norm continuous and every
derivation of a $AW^*$-algebra (in particular, of a $W^*$-algebra)
is inner  \cite{Olesen,Sak}. The development of the theory of
noncommutative integration, initiated by I. Segal's paper
\cite{Seg} prompted the introduction of numerous non-trivial
$*$-algebras of unbounded operators, which, in a certain sense,
are close to $AW^*$ and $W^*$-algebras. The main interest here is
represented by the $*$-algebra $LS(\mathcal{M})$ (respectively,
$S(\mathcal{M})$) of all locally measurable (respectively,
measurable) operators, affiliated with a $W^*$-algebra (or with a
$AW^*$-algebra) $\mathcal{M}$ and also by the $*$-algebra
$S(\mathcal{M},\tau)$ of all $\tau$-measurable operators from
$S(\mathcal{M})$, where $\tau$ is a faithful normal semifinite
trace on $\mathcal{M}$ \cite{Ne,San}.  The importance of the algebra $LS(\mathcal{M})$ for the theory of unbounded derivations on von Neumann algebras may be seen from the following classical example. Consider the algebra $\mathcal{M}= L_\infty(0,\infty)$ equipped with the semifinite trace given by Lebesgue integration and consider (a partially defined) derivation $\delta=d/dt$ on $\mathcal{M}$. A simple argument shows that the algebra $LS(\mathcal{M})$, which in this case coincides with the space of all measurable complex functions on $(0,\infty)$ is the only natural receptacle of $\delta$. Similar examples can be produced in much more sophisticated circumstances and clearly indicate that the algebra 
$LS(\mathcal{M})$ is the most suitable object for studying unbounded derivations on a given von Neumann algebra $\mathcal{M}$. However, the study of derivations in the setting of $LS(\mathcal{M})$ has been greatly impeded by the fact that it is not a Banach algebra (it is not even a Frechet algebra or locally convex algebra when endowed with its natural topology). An additional difficulty (especially, in comparison with rather well studied algebras $S(\mathcal{M},\tau)$) is represented by the lack of developed analytical techniques in $LS(\mathcal{M})$.   Only recently, in the series of papers \cite{AAK, AK, BdPS, BCS, Ber} meaningful attempts have been made to study the structure of derivations on such algebras. Of particular interest is the problem of identifying the class of von Neumann algebras, for which any derivation of the $*$-algebra $LS(\mathcal{M})$ is inner.
In the setting of commutative $W^*$-algebras
(respectively, commutative $AW^*$-algebras) this problem  is fully
resolved in \cite{BCS} (respectively, in \cite{Kusraev}). In the setting of von Neumann algebras of type $I$, a thorough treatment of this problem may be found in \cite{AAK} and \cite{BdPS}. The papers
\cite{AAK,BCS} contain examples of non-inner derivations of the
$*$-algebra $LS(\mathcal{M})$, which are not continuous with
respect to the topology $t(\mathcal{M})$ of local convergence in
measure on $LS(\mathcal{M})$.  The latter topology is the only topology considered on algebras  $LS(\mathcal{M})$ to date, it  may be also viewed as a noncommutative generalization of the classical topology of convergence in measure on the sets of finite measure in the case when $\mathcal{M}$ is given by the algebra $L^\infty(\Omega,\Sigma,\mu)$, where $(\Omega,\Sigma,\mu)$ is a $\sigma$-finite measure space (in this case the algebra $LS(\mathcal{M})$ coincides with the algebra of all measurable complex functions on $\Omega$). See details in Section 2 below.
On the other hand, it is shown in
\cite{AAK} that in the special case when $\mathcal{M}$ is a
properly infinite von Neumann algebra of type $I$, every
derivation of $LS(\mathcal{M})$ is continuous with respect to the
local measure topology  $t(\mathcal{M})$.  Moreover, all such derivations are inner.  Using a completely different technique, a
similar result was also obtained in \cite{BdPS} under the
additional assumption that the predual space $\mathcal{M}_*$ to
$\mathcal{M}$ is separable. It is of interest to observe that
an analogue of this result (that is the continuity of an arbitrary
derivation of $(LS(\mathcal{M}),t(\mathcal{M}))$) also holds for any
von Neumann algebra $\mathcal{M}$ of type $III$ \cite{AK}. In \cite{AK} the following problem is formulated (Problem 3): Let $\mathcal{M}$ be a  von Neumann algebra  of type $II$ and let $\tau$ be a faithful, normal, semifinite trace on $\mathcal{M}$.  Is any derivation of a $*$-algebra $S(\mathcal{M},\tau)$ equipped with the (classical) measure  topology generated by the trace  necessarily continuous? In \cite{Ber} this problem is solved affirmatively for a properly infinite algebra  $\mathcal{M}$. In view of the example we mentioned above,   a natural  problem (analogous to Problem 3 from  \cite{AK}) is whether any derivation in a $*$-algebra $LS(\mathcal{M})$ is necessarily continuous with respect to the topology $t(\mathcal{M})$, where $\mathcal{M}$ is a properly infinite von Neumann algebra of type $II$. The main results of this paper provide an affirmative solution to this problem. In fact, we establish a much stronger result that any derivation $\delta: \mathcal{A}\longrightarrow LS(\mathcal{M})$, where $\mathcal{A}$ is any subalgebra in $LS(\mathcal{M})$ containing the algebra $\mathcal{M}$, is necessarily continuous with respect to the topology $t(\mathcal{M})$. The proof proceeds in two stages. Firstly, we establish the
 $t(\mathcal{M})$-continuity of any derivation $\delta: LS(\mathcal{M})\longrightarrow LS(\mathcal{M})$ for a properly infinite von Neumann algebra $\mathcal{M}$ (section 3). Then, in Section 4, a special construction of extension of a derivation $\delta: \mathcal{M}\longrightarrow LS(\mathcal{M})$ up to a derivation defined on the whole algebra $LS(\mathcal{M})$ is given (here $\mathcal{M}$ is actually an arbitrary von Neumann algebra). We also hope that our approach to unbounded derivations on $\mathcal{M}$ as well as techniques developed for dealing with locally measurable operators and the topology of local convergence in measure in this paper  are of interest in their own right and may be used elsewhere.

We use terminology and notations from von Neumann algebra theory
 \cite{Sak, Tak} and theory of locally measurable operators from \cite{MCh, San, Yead}.

\section{Preliminaries}

Let $H$ be a Hilbert space, let $B(H)$ be the $*$-algebra of all
bounded linear operators on $H$, and let $\mathbf{1}$ be the
identity operator on $H$. Given a von Neumann algebra
$\mathcal{M}$ acting on $H$, denote by $\mathcal{Z}(\mathcal{M})$
the center of $\mathcal{M}$ and by
$\mathcal{P}(\mathcal{M})=\{p\in\mathcal{M}:\ p=p^2=p^*\}$ the
lattice of all projections in $\mathcal{M}$. Let
$P_{fin}(\mathcal{M})$ be the set of all finite projections in
$\mathcal{M}$.  Denote by $\tau_{so}$ the strong operator topology on $B(H)$, that is the locally convex topology generated by the family of seminorms $p_\xi(x)=\|x\xi\|_H,\xi\in H$, where $\|\cdot\|_H$ is the Hilbert norm on $H$.

A linear operator $x:\mathfrak{D}\left( x\right) \rightarrow
H $, where the domain $\mathfrak{D}\left( x\right) $ of $x$ is a linear
subspace of $H$, is said to be {\it affiliated} with $\mathcal{M}$ if $yx\subseteq
xy$ for all $y$ from the commutant $\mathcal{M}^{\prime }$ of algebra $\mathcal{M}$.

A densely-defined closed linear operator $x$ (possibly unbounded)
affiliated with $\mathcal{M}$  is said to be \emph{measurable}
with respect to $\mathcal{M}$ if there exists a sequence
$\{p_n\}_{n=1}^\infty\subset \mathcal{P}(\mathcal{M})$ such that
$p_n\uparrow \mathbf{1},\ p_n(H)\subset \mathfrak{D}(x)$ and
$p_n^\bot=\mathbf{1}-p_n\in P_{fin}(\mathcal{M})$ for every
$n\in\mathbb{N}$, where
$\mathbb{N}$ is the set of all natural numbers. Let us denote by
$S(\mathcal{M})$ the set of all measurable operators.

Let $x,y\in S(\mathcal{M})$. It is well known that $x+y,\ xy$ and $x^*$ are
densely-defined and preclosed operators. Moreover, the closures
$\overline{x+y}$ (strong sum), $\overline{xy}$ (strong product)
and $x^*$ are also measurable, and equipped with this operations (see \cite{Seg}) $S(\mathcal{M})$ is a unital
$*$-algebra over the field $\mathbb{C}$ of complex numbers. It is clear that $\mathcal{M}$ is a
$*$-subalgebra of $S(\mathcal{M})$.

A densely-defined linear operator $x$ affiliated with
$\mathcal{M}$ is called \emph{locally measurable} with respect to
$\mathcal{M}$ if there is a sequence $\{z_n\}_{n=1}^\infty$ of
central projections in $\mathcal{M}$ such that $z_n \uparrow
\mathbf{1}$ and $z_nx\in S(\mathcal{M})$ for all $n\in\mathbb{N}$.

The set $LS(\mathcal{M})$ of all locally measurable operators
(with respect to $\mathcal{M}$) is a unital $*$-algebra over the
field $\mathbb{C}$ with respect to the same algebraic operations
as in $S(\mathcal{M})$ \cite{Yead} and $S(\mathcal{M})$ is a
$*$-subalgebra of $LS(\mathcal{M})$. If $\mathcal{M}$ is finite,
or if $\dim(\mathcal{Z}(\mathcal{M}))<\infty$, the algebras
$S(\mathcal{M})$ and $LS(\mathcal{M})$ coincide \cite[ Corollary 2.3.5 and Theorem 2.3.16]{MCh}. If von Neumann algebra $\mathcal{M}$
is of type $III$ and $\dim(\mathcal{Z}(\mathcal{M}))=\infty$, then
$S(\mathcal{M})=\mathcal{M}$ and $LS(\mathcal{M})\neq
\mathcal{M}$ \cite[Theorem 2.2.19, Corollary 2.3.15,]{MCh}.

%

For every subset $E\subset LS(\mathcal{M})$, the sets of all
self-adjoint (resp., positive) operators in $E$ will be denoted by
$E_h$ (resp. $E_+$). The partial order in $LS(\mathcal{M})$ is
defined by its cone $LS_+(\mathcal{M})$ and is denoted by $\leq$.

We shall need the following important property of the $*$-algebra
$LS(\mathcal{M})$. Let $\{z_i\}_{i\in I}$ be a family of pairwise
orthogonal non-zero central projections from $\mathcal{M}$ with
$\sup_{i\in I}z_i=\mathbf{1}$, where $I$ is an arbitrary set of
indices (in this case, the family $\{z_i\}_{i\in I}$ is called a
central decomposition of the unity  $\mathbf{1}$). Consider the
$*$-algebra $\prod_{i\in I}LS(z_i\mathcal{M})$ with the
coordinate-wise operations and involution and set
$$\phi:\ LS(\mathcal{M})\rightarrow
\prod_{i\in I} LS(z_i\mathcal{M}),\ \phi(x):=\{z_ix\}_{i\in I}.
$$

\begin{proposition} \cite{MCh},\cite{Saito}.
\label{Saito} The mapping $\phi$ is a $*$-isomorphism from
$LS(\mathcal{M})$ onto $\prod_{i\in I}LS(z_i\mathcal{M})$.
\end{proposition}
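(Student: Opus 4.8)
The plan is to establish in turn that $\phi$ is well defined, a $*$-homomorphism, injective, and surjective, the last of these being the substantial point. For well-definedness I would first record that cutting by a central projection preserves local measurability: if $x\in LS(\mathcal M)$ and $w_m\uparrow\mathbf 1$ are central with $w_mx\in S(\mathcal M)$, then $z_iw_m\uparrow z_i$ inside $z_i\mathcal M$ and $(z_iw_m)(z_ix)=z_iw_mx\in S(z_i\mathcal M)$, so $z_ix\in LS(z_i\mathcal M)$ and each coordinate of $\phi(x)$ lands in the correct factor. That $\phi$ respects the structure follows from the centrality of the $z_i$ together with the fact that multiplication by a bounded central operator commutes with the strong sum, the strong product and the adjoint; concretely $z_i\,\overline{x+y}=\overline{z_ix+z_iy}$, $z_i\,\overline{xy}=\overline{(z_ix)(z_iy)}$ and $z_ix^*=(z_ix)^*$, which I would verify on the natural cores and expect to be routine. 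Injectivity is immediate: if $\phi(x)=0$ then $z_ix=0$ for every $i$, so for $\xi\in\mathfrak D(x)$ the vector $x\xi$ lies in $(z_iH)^\perp$ for all $i$, hence in $\bigl(\bigvee_i z_iH\bigr)^\perp=(\mathbf 1\,H)^\perp=\{0\}$, forcing $x=0$.

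The heart of the matter is surjectivity. Given $\{x_i\}_{i\in I}\in\prod_i LS(z_i\mathcal M)$ I would assemble the operator
\[
\mathfrak D(x)=\Bigl\{\xi\in H:\ z_i\xi\in\mathfrak D(x_i)\ \forall i,\ \sum_i\|x_iz_i\xi\|_H^2<\infty\Bigr\},\qquad x\xi=\sum_i x_iz_i\xi,
\]
and check that $x$ is densely defined, closed and affiliated with $\mathcal M$ with $z_ix=x_i$, so that necessarily $\phi(x)=\{x_i\}$. The real work is then to show $x\in LS(\mathcal M)$, that is, to produce a \emph{single countable} central decomposition on whose pieces $x$ is measurable, even though the index set $I$ may be uncountable.

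The device I would use is to group indices by a measurability parameter. Using that each $x_i\in LS(z_i\mathcal M)$, set (here $E^{|y|}$ denotes the spectral measure of $|y|$)
\[
c_i^{(n)}=\sup\bigl\{c\le z_i\ \text{central}:\ cx_i\in S(z_i\mathcal M),\ E^{|cx_i|}(n,\infty)\in P_{fin}(z_i\mathcal M)\bigr\},
\]
which is increasing in $n$ and satisfies $c_i^{(n)}\uparrow z_i$ as $n\to\infty$ by local measurability of $x_i$, and then put $g_n=\sup_i c_i^{(n)}$, a central projection with $g_n\uparrow\mathbf 1$. The aim is to show that each $g_nx$ is measurable, which converts the trivial but uncountable net of cut-offs $\{z_i\}$ into the required sequence $\{g_n\}$.

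The analytic fact underpinning this is that an orthogonal \emph{central} sum of finite projections is finite: if $f_i\le z_i$ is finite in $z_i\mathcal M$ for each $i$, then $\sup_i f_i$ is finite in $\mathcal M$, because a partial isometry cannot transport mass across distinct central summands, so equivalence of projections is compatible with the decomposition $\{z_i\}$. Granting this, $E^{|g_nx|}(n,\infty)=\sup_i E^{|c_i^{(n)}x_i|}(n,\infty)$ is a central sum of finite projections and hence finite, and taking $p_m=\sup_i E^{|c_i^{(n)}x_i|}[0,m]$ for $m\ge n$ exhibits $g_nx\in S(\mathcal M)$, whence $x\in LS(\mathcal M)$; the same lemma also justifies that the defining suprema $c_i^{(n)}$ themselves meet the finiteness condition. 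The main obstacle, and the step I would treat most carefully, is exactly this passage from the uncountable family $\{z_i\}$ to a genuine sequence $\{g_n\}$ of central projections on which $x$ is measurable: once the lemma on central sums of finite projections is secured, grouping by the measurability index does the conversion.
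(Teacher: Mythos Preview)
The paper does not prove this proposition; it is quoted as a known fact with references to \cite{MCh} and \cite{Saito}, so there is no in-paper argument to compare against.

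Your outline is sound and follows the natural route. Well-definedness, the homomorphism property and injectivity are routine, and for surjectivity your key device---grouping the possibly uncountably many central summands by a uniform measurability threshold $n$ so that $g_n=\sup_i c_i^{(n)}$ furnishes a genuine \emph{sequence} of central projections with $g_nx\in S(\mathcal M)$---is exactly the right idea. The supporting lemma (a central orthogonal supremum of finite projections is finite) is correct: if $u^*u=\sup_i f_i$ and $uu^*\le\sup_i f_i$, cutting by each $z_i$ and using finiteness of $f_i$ gives $z_iuu^*=f_i$ for all $i$, hence $uu^*=u^*u$.

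Two small points to tidy. First, the formula $p_m=\sup_i E^{|c_i^{(n)}x_i|}[0,m]$ is not quite what you want: computed in $\mathcal M$, the projection $E^{|c_i^{(n)}x_i|}[0,m]$ already dominates $(c_i^{(n)})^\perp$ (the kernel of $|c_i^{(n)}x_i|$ contains $(c_i^{(n)})^\perp H$), so the supremum is trivially $\mathbf 1$. Write instead $p_m=g_n^\perp\vee\sup_i c_i^{(n)}E^{|c_i^{(n)}x_i|}[0,m]$, so that $p_m^\perp=\sup_i E^{|c_i^{(n)}x_i|}(m,\infty)$ is the desired central sum of finite projections. Second, when you claim that the supremum $c_i^{(n)}$ itself inherits the defining property, the family over which you take the supremum is not orthogonal; orthogonalise it first (well-order the family and subtract predecessors) before invoking the lemma. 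Neither point affects the overall scheme.
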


Observe that the analogue of Proposition \ref{Saito} for the
$*$-algebra $S(\mathcal{M})$ does not hold in general
 \cite[\S2.3]{MCh}.

Proposition \ref{Saito}
implies that given any central decomposition $\{z_i\}_{i\in I}$
of the unity and any family of elements $\{x_i\}_{i\in I}$ in
$LS(\mathcal{M})$, there exists a unique element $x\in
LS(\mathcal{M})$ such that $z_ix=z_ix_i$ for all $i\in I$. This
element is denoted by $x=\sum_{i\in I}z_ix_i$.

It is shown in \cite{MCh2} that if $\mathcal{M}$ is of type $I$ or $III$, then for any $x\in LS(\mathcal{M})$ there exists a countable central decomposition of unity
$\{z_n\}_{n=1}^\infty$, such that $x=\sum_{n=1}^\infty z_n x$ and $z_nx\in
\mathcal{M}$ for all $n\in \mathbb{N}$.

Let $x$ be a closed operator with dense domain $\mathfrak{D}(x)$
in $H$, let $x=u|x|$ be the polar decomposition of the operator $x$,
where $|x|=(x^*x)^{\frac{1}{2}}$ and $u$ is a  partial isometry
in $B(H)$ such that $u^*u$ is the right support $r(x)$ of $x$. It is
known that $x\in LS(\mathcal{M})$ (respectively, $x\in S(\mathcal{M})$) if and only if $|x|\in
LS(\mathcal{M})$ (respectively, $|x|\in
S(\mathcal{M})$) and $u\in \mathcal{M}$~\cite[\S\S\,2.2,2.3]{MCh}. If
$x$ is a self-adjoint operator affiliated with $\mathcal{M}$, then
the spectral family of projections $\{E_\lambda(x)\}_{\lambda\in
  \mathbf{R}}$ for $x$ belongs to
$\mathcal{M}$~\cite[\S\,2.1]{MCh}. A locally measurable operator $x$ is measurable if and only if $E_\lambda^\bot(|x|)\in \mathcal{P}_{fin}(\mathcal{M})$ for some $\lambda>0$ \cite[\S\,2.2]{MCh}.

Let us now recall the definition of the local measure topology.
First let $\mathcal{M}$ be a commutative von Neumann algebra. Then
$\mathcal{M}$ is $*$-isomorphic to the $*$-algebra
$L^\infty(\Omega,\Sigma,\mu)$ of all essentially bounded
measurable complex-valued functions defined on a measure space
$(\Omega,\Sigma,\mu)$ with the measure $\mu$ satisfying the direct
sum property (we identify functions that are equal almost
everywhere) (see e.g. \cite[Ch. III, \S 1]{Tak}).  The direct sum property of a measure $\mu$ means
that the Boolean algebra of all projections of the $*$-algebra
$L^\infty(\Omega,\Sigma,\mu)$ is order complete, and for any
non-zero $ p\in \mathcal{P}(\mathcal{M})$ there exists a non-zero
projection $q\leq p$ such that $\mu(q)<\infty$.  The direct sum property of a measure $\mu$ is equivalent to the fact that the functional $\tau(f):=\int_\Omega f\,d\mu$ is a semi-finite normal faithful trace on the algebra $L^\infty(\Omega,\sigma,\mu)$.

Consider the $*$-algebra
$LS(\mathcal{M})=S(\mathcal{M})=L^0(\Omega,\Sigma,\mu)$ of all
measurable almost everywhere finite complex-valued functions
defined on $(\Omega,\Sigma,\mu)$ (functions that are equal almost
everywhere are identified).  On $L^0(\Omega,\Sigma,\mu)$, define
the local measure topology $t(L^\infty(\Omega))$, that is, the
Hausdorff vector topology, whose base of neighborhoods of zero is
given by
$$
  W(B,\varepsilon,\delta):= \{f\in\ L^0(\Omega,\, \Sigma,\, \mu)
  \colon
  \ \hbox{there exists a set} \ E\in \Sigma\
  \mbox{such that}
  $$
  $$
   E\subseteq B, \ \mu(B\setminus
  E)\leq\delta, \ f\chi_E \in L^\infty(\Omega,\Sigma,\mu), \
  \|f\chi_E\|_{{L^\infty}(\Omega,\Sigma,\mu)}\leq\varepsilon\},
$$
where $\varepsilon, \ \delta >0$, $B\in\Sigma$, $\mu(B)<\infty$, and
$$
\chi(\omega)=\left\{\begin{array}{rcl}
1&,& \ \ \omega\in E, \\ 0&,& \ \ \ \omega  \ \not\in \ E.
\end{array}\right.
  $$

Convergence of a net $\{f_\alpha\}$ to $f$ in the topology
$t(L^\infty(\Omega))$, denoted by $f_\alpha
\stackrel{t(L^\infty(\Omega))}{\longrightarrow}f$, means that
$f_\alpha \chi_B \longrightarrow f\chi_B$ in measure $\mu$ for any
$B\in \Sigma$ with $\mu(B)<\infty$. Note, that the topology
$t(L^\infty(\Omega))$ does not change if the measure $\mu$ is
replaced with an equivalent measure \cite{Yead}.

Now let $\mathcal{M}$ be an arbitrary von Neumann algebra and let
$\varphi$ be a $*$-isomorphism from $\mathcal{Z}(\mathcal{M})$
onto the $*$-algebra $L^\infty(\Omega,\Sigma,\mu)$, where $\mu$ is
a measure satisfying the direct sum property.  Denote by
$L^+(\Omega,\, \Sigma,\, m)$ the set of all measurable real-valued
functions defined on $(\Omega,\Sigma,\mu)$ and taking values in
the extended half-line $[0,\, \infty]$ (functions that are equal
almost everywhere are identified). It was shown in~\cite{Seg} that
there exists a mapping
$$
\mathcal{D}\colon
\mathcal{P}(\mathcal{M})\to L^+(\Omega,\Sigma,\mu)
$$
that possesses the following properties:
\begin{itemize}
\item[(D1)]  $\mathcal{D}(p)\in L_+^0(\Omega,\Sigma,\mu)\Longleftrightarrow p\in \mathcal{P}_{fin}(\mathcal{M})$;
\item[(D2)] $\mathcal{D}(p\vee q)=\mathcal{D}(p)+\mathcal{D}(q)$ if
  $pq=0$;
\item[(D3)] $\mathcal{D}(u^*u)=\mathcal{D}(uu^*)$ for any partial
  isometry $u\in \mathcal{M}$;
\item[(D4)] $\mathcal{D}(zp)=\varphi(z)\mathcal{D}(p)$ for any $z\in
  \mathcal{P}(\mathcal{Z}(\mathcal{M}))$ and $p\in
  \mathcal{P}(\mathcal{M})$;
\item[(D5)] if $p_\alpha, p\in
  \mathcal{P}(\mathcal{M})$, $\alpha\in A$ and $p_\alpha\uparrow p$, then
  $\mathcal{D}(p)=\sup\limits_{\alpha\in A}\mathcal{D}(p_\alpha)$.
\end{itemize}

A mapping $\mathcal{D}\colon \mathcal{P}(\mathcal{M})\to
L^+(\Omega,\Sigma,\mu)$ that satisfies properties (D1)---(D5) is
called a \textit{dimension function} on $\mathcal{P}(\mathcal{M})$.

A dimension function $\mathcal{D}$ also has the following properties \cite{Seg}:
\begin{itemize}
\item[(D6)] if $p_n\in\mathcal{P}(\mathcal{M})$, $n\in\mathbb{N}$, then $\mathcal{D}(\sup_{n\geq 1} p_n)\leq\sum_{n=1}^\infty\mathcal{D}(p_n)$, in addition,  when $p_np_m=0$, $n\neq m$, the equality holds;
\item[(D7)] if $p_n\in\mathcal{P}_{fin}(\mathcal{M})$, $n\in\mathbb{N}$, $p_n\downarrow 0$, then $\mathcal{D}(p_n)\rightarrow 0$ almost everywhere.
\end{itemize}

For arbitrary scalars $\varepsilon , \delta >0$ and a set $B\in
\Sigma$, $\mu(B)<\infty$, we set
$$
  V(B,\varepsilon, \delta ) := \{x\in LS(\mathcal{M})\colon \
  \mbox{there exist} \ p\in \mathcal{P}(\mathcal{M}),\
  z\in \mathcal{P}(\mathcal{Z}(\mathcal{M})),
  $$
  $$
   \mbox{such that} \ xp\in \mathcal{M},
  \|xp\|_{\mathcal{M}}\leq\varepsilon,
  \ \varphi(z^\bot) \in W(B,\varepsilon,\delta), \
    \mathcal{D}(zp^\bot)\leq\varepsilon \varphi(z)\},
$$
where $\|\cdot\|_{\mathcal{M}}$ is the $C^*$-norm on $\mathcal{M}$.

It was shown in~\cite{Yead} that the system of sets
\begin{equation*}
 \{x+V(B,\,\varepsilon,\,\delta)\colon \ x \in LS(\mathcal{M}),\
 \varepsilon, \ \delta >0,\ B\in\Sigma,\ \mu(B)<\infty\}
\end{equation*}
defines a Hausdorff vector topology $t(\mathcal{M})$ on
$LS(\mathcal{M})$ such that the sets $\{x+V(B,\,\varepsilon,\,\delta)\}$, $\varepsilon, \ \delta >0$, $B\in \Sigma$, $\mu(B)<\infty$ form a neighborhood
base of an operator $x\in LS(\mathcal{M})$. It is known that
$(LS(\mathcal{M}), t(\mathcal{M}))$ is a complete topological
$*$-algebra, and the topology $t(\mathcal{M})$ does not depend on
a choice of dimension function $\mathcal{D}$~ and on the choice of $*$-isomorphism $\varphi$ (see e.g. \cite[\S3.5]{MCh}, \cite{Yead}).

The topology $t(\mathcal{M})$ on $LS(\mathcal{M})$ is called the \textit{local
  measure topology } (or the \textit{topology of convergence locally in measure}).  Note, that in case when $\mathcal{M}=B(H)$ the equality $LS(\mathcal{M})=\mathcal{M}$ holds \cite[\S 2.3]{MCh} and the topology $t(\mathcal{M})$ coincides with the uniform topology, generated by the $C^*$-norm $\|\cdot\|_{B(H)}$.

We will need the following  criterion for convergence of nets from $LS(\mathcal{M})$
with respect to this topology.

\begin{proposition}[{\cite[\S\,3.5]{MCh}}]
\label{plm-spk1}
(i). A net $\{p_\alpha\}_{\alpha\in A}\subset
\mathcal{P}(\mathcal{M})$ converges to zero with respect to the topology $t(\mathcal{M})$ if and only if there is a net $\{z_\alpha\}_{\alpha\in A}\subset \mathcal{P}(\mathcal{Z}(\mathcal{M}))$ such that $z_\alpha p_\alpha\in \mathcal{P}_{fin}(\mathcal{M})$ for all $\alpha\in A$, $\varphi(z^\bot_\alpha)
\stackrel{t(L^\infty(\Omega))}{\longrightarrow} 0$, and $\mathcal{D}(z_\alpha
p_\alpha)\stackrel{t(L^\infty(\Omega))}{\longrightarrow} 0$, where $t(L^\infty(\Omega))$ is the local measure topology  on $L^0(\Omega, \Sigma, \mu)$, and $\varphi$ is a $*$-isomorphism of $\mathcal{Z}(\mathcal{M})$ onto $L^\infty(\Omega,\Sigma,\mu)$.

(ii). A net $\{x_\alpha\}_{\alpha\in A} \subset LS(\mathcal{M})$ converges to zero with respect to the topology $t(\mathcal{M})$ if and only if $E^\bot_\lambda(|x_\alpha|) \stackrel{t(\mathcal{M})}{\longrightarrow} 0$ for every $\lambda>0$, where $\{E_\lambda(|x_\alpha|)\}$ is the spectral projection family for the operator $|x_\alpha|$.
\end{proposition}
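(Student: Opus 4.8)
The plan is to establish (i) first by direct manipulation of the defining neighbourhoods $V(B,\varepsilon,\delta)$, and then to deduce (ii) from (i) by passing through the spectral projections $E_\lambda(|x_\alpha|)$. Throughout, the decisive algebraic tool is Kaplansky's parallelogram law together with the monotonicity of the dimension function under Murray--von Neumann subordination $\precsim$ (which follows from (D2)--(D4)): if $p\precsim s$, then $\mathcal D(zp)\le\mathcal D(zs)$ for every $z\in\mathcal P(\mathcal Z(\mathcal M))$. The recurring elementary observation is that for projections $p,q$ the inequality $\|pq\|_{\mathcal M}<1$ forces $p\wedge q=0$, whence $p\sim p\vee q-q\le q^{\bot}$, i.e. $p\precsim q^{\bot}$.

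For the forward implication of (i), I would fix an increasing sequence $B_n\uparrow\Omega$ of sets of finite measure and use $p_\alpha\to 0$ to produce indices $\alpha_1\le\alpha_2\le\cdots$ with $p_\alpha\in V(B_n,1/n,1/n)$ for all $\alpha\ge\alpha_n$. Unwinding the definition of $V$ gives, for such $\alpha$, projections $r_\alpha\in\mathcal P(\mathcal M)$ and $z_\alpha\in\mathcal P(\mathcal Z(\mathcal M))$ with $\|p_\alpha r_\alpha\|_{\mathcal M}\le 1/n<1$, $\varphi(z_\alpha^{\bot})\in W(B_n,1/n,1/n)$ and $\mathcal D(z_\alpha r_\alpha^{\bot})\le(1/n)\varphi(z_\alpha)$. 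By the observation above $p_\alpha\precsim r_\alpha^{\bot}$, so $\mathcal D(z_\alpha p_\alpha)\le\mathcal D(z_\alpha r_\alpha^{\bot})\le(1/n)\varphi(z_\alpha)$; in particular $z_\alpha p_\alpha\in\mathcal P_{fin}(\mathcal M)$ by (D1). Writing $n=n(\alpha):=\max\{n:\alpha_n\le\alpha\}$, so that $n(\alpha)\to\infty$ along the net, one gets $\mathcal D(z_\alpha p_\alpha)\le(1/n(\alpha))\mathbf 1\to 0$ and, since $B_n\uparrow\Omega$, $\varphi(z_\alpha^{\bot})\to 0$ in $t(L^\infty(\Omega))$, which is the required net. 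For the converse I would, given $V(B,\varepsilon,\delta)$, use $r=p_\alpha^{\bot}$ (so $\|p_\alpha r\|=0$) and cut the central witness down to $w_\alpha:=z_\alpha\,\varphi^{-1}(\chi_{\{\mathcal D(z_\alpha p_\alpha)\le\varepsilon\}})$, on whose support $\mathcal D(z_\alpha p_\alpha)\le\varepsilon$, so that $\mathcal D(w_\alpha p_\alpha)\le\varepsilon\varphi(w_\alpha)$ holds pointwise; the hypotheses $\varphi(z_\alpha^{\bot})\to 0$ and $\mathcal D(z_\alpha p_\alpha)\to 0$ locally in measure then force $\mu\big(B\cap\{\varphi(w_\alpha^{\bot})=1\}\big)\le\delta$ eventually, i.e. $\varphi(w_\alpha^{\bot})\in W(B,\varepsilon,\delta)$, so $p_\alpha\in V(B,\varepsilon,\delta)$ eventually.

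For (ii), the easy direction assumes $E_\lambda^{\bot}(|x_\alpha|)\to 0$ for all $\lambda>0$. Given $V(B,\varepsilon,\delta)$ I would take the bounded cut $r=E_\varepsilon(|x_\alpha|)$, for which $\|x_\alpha r\|_{\mathcal M}\le\varepsilon$ from the polar decomposition $x_\alpha=u_\alpha|x_\alpha|$; since $r^{\bot}=E_\varepsilon^{\bot}(|x_\alpha|)\to 0$ in $t(\mathcal M)$, part (i) supplies a central net controlling $\mathcal D(\,\cdot\,r^{\bot})$, and the same cut-down argument as in the converse of (i) places $x_\alpha$ in $V(B,\varepsilon,\delta)$ eventually. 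For the forward direction, fix $\lambda>0$ and $B,\varepsilon,\delta$, and use $x_\alpha\to 0$ to obtain $x_\alpha\in V(B,\varepsilon',\delta)$ eventually with $\varepsilon'=\min(\varepsilon,\lambda/2)$, giving witnesses $r_\alpha,z_\alpha$ with $\|x_\alpha r_\alpha\|\le\varepsilon'<\lambda$. A spectral estimate (a unit vector in $\operatorname{ran}r_\alpha\cap\operatorname{ran}E_\lambda^{\bot}(|x_\alpha|)$ would satisfy both $\||x_\alpha|\xi\|\le\varepsilon'$ and $\||x_\alpha|\xi\|\ge\lambda$) shows $E_\lambda^{\bot}(|x_\alpha|)\wedge r_\alpha=0$, hence $E_\lambda^{\bot}(|x_\alpha|)\precsim r_\alpha^{\bot}$ and $\mathcal D(z_\alpha E_\lambda^{\bot}(|x_\alpha|))\le\mathcal D(z_\alpha r_\alpha^{\bot})\le\varepsilon\varphi(z_\alpha)$. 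Taking in the definition of $V$ the projection $E_\lambda(|x_\alpha|)$ and the central projection $z_\alpha$ places $E_\lambda^{\bot}(|x_\alpha|)$ in $V(B,\varepsilon,\delta)$ eventually, i.e. $E_\lambda^{\bot}(|x_\alpha|)\to 0$.

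The main obstacle I anticipate is not the algebra but the bookkeeping forced by the mismatch between the two kinds of smallness built into $V(B,\varepsilon,\delta)$: the pointwise inequality $\mathcal D(zr^{\bot})\le\varepsilon\varphi(z)$ versus the purely local-in-measure smallness recorded by $W(B,\varepsilon,\delta)$. Reconciling them requires, in each converse step, shrinking the central witness to the region where the relevant dimension function is genuinely bounded by $\varepsilon$ and then verifying that the discarded region has measure at most $\delta$; this is exactly where the hypotheses must be read as convergence-in-measure statements rather than pointwise bounds. A secondary technical point is justifying the spectral and domain manipulations for the possibly unbounded $|x_\alpha|$ and checking that subordination $\precsim$ interacts correctly with multiplication by central projections; both are routine consequences of (D1)--(D7) and the facts recalled above, but should be stated with care.
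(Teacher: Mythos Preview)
The paper does not supply a proof of this proposition: it is quoted from \cite[\S\,3.5]{MCh} and used as a black box throughout Sections~2--4. So there is no in-paper argument to compare your proposal against.

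On its own merits your outline is essentially the natural one, with one genuine gap. In the forward direction of (i) you ``fix an increasing sequence $B_n\uparrow\Omega$ of sets of finite measure'' and build the net $\{z_\alpha\}$ by assigning to each $\alpha$ the index $n(\alpha)$. But the measure $\mu$ in this paper is only assumed to have the direct sum property and need \emph{not} be $\sigma$-finite (this is exactly why Propositions~\ref{p4} and~\ref{p5} are proved separately later), so such a countable exhaustion may fail to exist. The strategy is salvageable --- for each $\alpha$ one can still extract from membership in suitably fine neighbourhoods $V(B,\varepsilon,\delta)$ a single central witness $z_\alpha$ that works simultaneously for the two local-in-measure convergences --- but the construction has to be indexed by the directed set $A$ itself and checked against an arbitrary test set $B$, not routed through a countable ladder. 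Apart from this bookkeeping issue (which you partly anticipate in your closing paragraph), the Kaplansky parallelogram step $\|p_\alpha r_\alpha\|<1\Rightarrow p_\alpha\precsim r_\alpha^{\bot}$ and the spectral-cut argument in (ii) are the right tools and go through as you describe.
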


\begin{remark}\label{rem_plm-spk1}
It follows from Proposition \ref{plm-spk1} that, if $q_\alpha,p_\alpha\in \mathcal{P(M)},q_\alpha\leq p_\alpha$ and $p_\alpha \stackrel{t(\mathcal{M})}{\longrightarrow} 0$, then $q_\alpha \stackrel{t(\mathcal{M})}{\longrightarrow} 0$.
\end{remark}

Since the involution is continuous in the topology
$t(\mathcal{M})$, the set $LS_h(\mathcal{M})$ is closed in
$(LS(\mathcal{M}),t(\mathcal{M}))$. The cone $LS_+(\mathcal{M})$
of positive elements is also closed in
$(LS(\mathcal{M}),t(\mathcal{M}))$~\cite{Yead}.

Proposition \ref{plm-spk1} will be used in the proof of the following convergence criterion.

\begin{proposition}
\label{p3} If $x_\alpha\in LS(\mathcal{M}),\ 0\neq z \in
\mathcal{P}(\mathcal{Z}(\mathcal{M}))$, then
$$zx_\alpha\stackrel{t(\mathcal{M})}{\longrightarrow} 0\Longleftrightarrow zx_\alpha\stackrel{t(z\mathcal{M})}{\longrightarrow} 0.$$
\end{proposition}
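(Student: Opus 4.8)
The plan is to use the convergence criteria in Proposition \ref{plm-spk1} to reduce the assertion, first from arbitrary nets to nets of projections via part (ii), and then to settle the projection case via part (i). Observe first that for a central projection $z$ one has $|zx_\alpha| = z|x_\alpha|$, since $z$ commutes with $x_\alpha^* x_\alpha$; consequently, for every $\lambda > 0$,
\begin{equation*}
E^\bot_\lambda(|zx_\alpha|) = z\,E^\bot_\lambda(|x_\alpha|),
\end{equation*}
and this identity holds whether the spectral family is computed in $\mathcal{M}$ or in $z\mathcal{M}$ (the unit of the latter being $z$). Writing $p_\alpha^\lambda := z E^\bot_\lambda(|x_\alpha|) \in \mathcal{P}(z\mathcal{M})$ and applying Proposition \ref{plm-spk1}(ii) in $\mathcal{M}$ and in $z\mathcal{M}$ respectively, both convergences in the statement are equivalent to the convergence of the same family of projections $p^\lambda_\alpha$ (for every $\lambda>0$) in the respective topologies. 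Thus it suffices to prove, for an arbitrary net $\{p_\alpha\} \subset \mathcal{P}(z\mathcal{M})$ (so $p_\alpha \leq z$), that $p_\alpha \to 0$ in $t(\mathcal{M})$ if and only if $p_\alpha \to 0$ in $t(z\mathcal{M})$.

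To treat the projection case I would fix the $*$-isomorphism $\varphi : \mathcal{Z}(\mathcal{M}) \to L^\infty(\Omega, \Sigma, \mu)$ and the dimension function $\mathcal{D}$ used to define $t(\mathcal{M})$, and write $\varphi(z) = \chi_{\Omega_z}$ for a measurable set $\Omega_z \subseteq \Omega$. Since $t(z\mathcal{M})$ is independent of the choices involved, I may compute it using the restriction $\varphi|_{z\mathcal{Z}(\mathcal{M})} : z\mathcal{Z}(\mathcal{M}) \to L^\infty(\Omega_z)$ together with $\mathcal{D}_z := \mathcal{D}|_{\mathcal{P}(z\mathcal{M})}$, which is a dimension function on $z\mathcal{M}$ by (D1)--(D5). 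Two elementary facts will be used repeatedly: a projection $e \leq z$ is finite in $z\mathcal{M}$ if and only if it is finite in $\mathcal{M}$; and a function $g \in L^0(\Omega)$ supported on $\Omega_z$ converges to $0$ in $t(L^\infty(\Omega))$ if and only if it does so in $t(L^\infty(\Omega_z))$ (because $g\chi_B = g\chi_{B \cap \Omega_z}$ for every $B$ of finite measure). Moreover, by (D4), $\mathcal{D}(e)$ is supported on $\Omega_z$ whenever $e \leq z$.

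With these preparations both implications follow from Proposition \ref{plm-spk1}(i). If $p_\alpha \to 0$ in $t(z\mathcal{M})$, pick the witnessing net $\{w_\alpha\} \subset \mathcal{P}(z\mathcal{Z}(\mathcal{M}))$ from (i) (applied in $z\mathcal{M}$) and set $\tilde w_\alpha := w_\alpha + z^\bot \in \mathcal{P}(\mathcal{Z}(\mathcal{M}))$; then $\tilde w_\alpha^\bot = z - w_\alpha$, the product $\tilde w_\alpha p_\alpha = w_\alpha p_\alpha$ remains finite in $\mathcal{M}$, and the conditions $\varphi(\tilde w_\alpha^\bot) \to 0$ and $\mathcal{D}(\tilde w_\alpha p_\alpha) \to 0$ in $t(L^\infty(\Omega))$ hold because the corresponding functions are supported on $\Omega_z$ and converge in $t(L^\infty(\Omega_z))$; hence $p_\alpha \to 0$ in $t(\mathcal{M})$. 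Conversely, if $p_\alpha \to 0$ in $t(\mathcal{M})$ with witnessing net $\{w_\alpha\} \subset \mathcal{P}(\mathcal{Z}(\mathcal{M}))$, set $v_\alpha := z w_\alpha \in \mathcal{P}(z\mathcal{Z}(\mathcal{M}))$; using $p_\alpha \leq z$ one checks $v_\alpha p_\alpha = w_\alpha p_\alpha$ (finite, and $\leq z$) and $z - v_\alpha = z w_\alpha^\bot$, so that $\varphi(z - v_\alpha) = \chi_{\Omega_z}\varphi(w_\alpha^\bot)$ and $\mathcal{D}(v_\alpha p_\alpha) = \mathcal{D}(w_\alpha p_\alpha)$ both converge to $0$ in $t(L^\infty(\Omega_z))$ (multiplication by $\chi_{\Omega_z}$ and restriction to $\Omega_z$ preserve convergence in measure); hence $p_\alpha \to 0$ in $t(z\mathcal{M})$.

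The main obstacle I anticipate is purely bookkeeping: correctly matching the center, the dimension function, and the underlying measure space of the reduced algebra $z\mathcal{M}$ with those of $\mathcal{M}$, and carefully tracking complements when passing a central projection from $z\mathcal{M}$ up to $\mathcal{M}$ (via $w_\alpha \mapsto w_\alpha + z^\bot$) and back down (via $w_\alpha \mapsto z w_\alpha$). Once the identity $E^\bot_\lambda(|zx_\alpha|) = zE^\bot_\lambda(|x_\alpha|)$ and the compatibility of the two local measure topologies on functions supported on $\Omega_z$ are in place, the remainder is a routine verification of the hypotheses of Proposition \ref{plm-spk1}.
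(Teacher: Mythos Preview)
Your proof is correct and follows essentially the same approach as the paper: both reduce to the projection case via Proposition~\ref{plm-spk1}(ii) using the relation between spectral projections in $\mathcal{M}$ and $z\mathcal{M}$, and then handle projections via Proposition~\ref{plm-spk1}(i) with the same ``lift/cut'' constructions $w_\alpha\mapsto w_\alpha+z^\bot$ and $w_\alpha\mapsto zw_\alpha$ for the witnessing central projections. The paper presents the projection case first and the general case second, and writes out the restricted isomorphism $\psi$ and dimension function $\mathcal{D}_z$ a bit more explicitly, but the substance is the same.
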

\begin{proof}
Fix a $*$-isomorphism $\varphi: \mathcal{Z}(\mathcal{M})\to L^\infty(\Omega,\Sigma,\mu)$ and $0\neq z\in
\mathcal{P}(\mathcal{Z}(\mathcal{M}))$. Let $E\in
\Sigma$ be such that $\varphi(z)=\chi_E$. Define the mapping
$$\psi:\mathcal{Z}(z\mathcal{M})=z\mathcal{Z}(\mathcal{M})\to
L^\infty(E,\Sigma_E, \mu|_E)$$ by setting
$$
\psi(za)=\varphi(za)|_E, \ \hbox{for} \ a\in \mathcal{Z}(\mathcal{M}).
$$
Here, $\Sigma_E:=\{A\cap E: A\in \Sigma\}$ and $\mu|_E$ is the
restriction of $\mu$ to $\Sigma_E$. It is clear that $\psi$ is a
$*$-isomorphism. Now define
$\mathcal{D}_z:\mathcal{P}(z\mathcal{M})\rightarrow
L_+(E,\Sigma_E, \mu|_E)$ by setting $\mathcal{D}_z(q)=\mathcal{D}(q)|_E$
for $q\in \mathcal{P}(z\mathcal{M})$. It is straightforward that $\mathcal{D}_z$ is a
dimension function on $\mathcal{P}(z\mathcal{M})$.

Let $\{q_\alpha\}_{\alpha\in A}\subset \mathcal{P}(z\mathcal{M})$. We claim
$$q_\alpha \stackrel{t(\mathcal{M})}{\longrightarrow} 0\Longleftrightarrow q_\alpha
\stackrel{t(z\mathcal{M})}{\longrightarrow}0.$$
To see the claim, assume that the first convergence holds and observe that by
  Proposition~\ref{plm-spk1}$(i)$, there exists a net
  $\{z_\alpha\}_{\alpha\in A}\subset
  \mathcal{P}(\mathcal{Z}(\mathcal{M}))$ such that $z_\alpha
  q_\alpha \in \mathcal{P}_{fin}(\mathcal{M})$ for any $\alpha\in
  A$,
  $\varphi(z_\alpha^\bot)\stackrel{t(L^\infty(\Omega))}{\longrightarrow}0$,
  and $\mathcal{D}(z_\alpha
  q_\alpha)\stackrel{t(L^\infty(\Omega))}{\longrightarrow}0$.
  The projection $r_\alpha=zz_\alpha$ belongs to the center
  $\mathcal{Z}(z\mathcal{M})$ of the von Neumann algebra
  $z\mathcal{M}$, and $r_\alpha q_\alpha=z_\alpha q_\alpha$ is a
  finite projection in $z\mathcal{M}$ for each $\alpha\in A$.  Also
  $$
  \psi(z-r_\alpha)=\psi(z(\mathbf{1}-z_\alpha))=\varphi(z z_\alpha^\bot)|_E=\varphi(z)\varphi( z_\alpha^\bot)|_E\stackrel{t(L^\infty(E))}{\longrightarrow}0,
  $$
 where $t(L^\infty(E))$ is the local measure topology on $L^0(E,\Sigma_E,\mu|_E)$, and
 $$
 \mathcal{D}_z(r_\alpha q_\alpha)=\mathcal{D}_z(z_\alpha q_\alpha)=\mathcal{D}(z_\alpha q_\alpha)|_E\stackrel{t(L^\infty(E))}{\longrightarrow}0.
 $$
  Hence, by Proposition~\ref{plm-spk1}(i) we get that $q_\alpha
\stackrel{t(z\mathcal{M})}{\longrightarrow}0$.

We will show now that the convergence $q_\alpha
\stackrel{t(z\mathcal{M})}{\longrightarrow}0$ for $\{q_\alpha\}_{\alpha\in A}\subset \mathcal{P}(z\mathcal{M})$
implies the convergence $q_\alpha\stackrel{t(\mathcal{M})}{\longrightarrow}0$.

Let $\{r_\alpha\}_{\alpha\in A}$ be a net in $\mathcal{P}(\mathcal{Z}(z\mathcal{M}))$ such that $r_\alpha q_\alpha\in \mathcal{P}_{fin}(z\mathcal{M})$
for every $\alpha\in A$,
$$
\psi(z-r_\alpha)\stackrel{t(L^\infty(E))}{\longrightarrow}0
$$
and
$$
\mathcal{D}_z(r_\alpha
q_\alpha)\stackrel{t(L^\infty(E))}{\longrightarrow}0.
$$
Put $z_\alpha=z^\bot+r_\alpha$. Then $z_\alpha\in
\mathcal{P}(\mathcal{Z}(\mathcal{M}))$ and $z_\alpha q_\alpha=
r_\alpha q_\alpha\in\mathcal{P}_{fin}(\mathcal{M}) $. Since
$z^\bot_\alpha=z(\mathbf{1}-r_\alpha)$, we have
$\varphi(z_\alpha^\bot)=\chi_E\varphi(z_\alpha^\bot)$ and
  $$
  \varphi(z^\bot_\alpha)|_E=\chi_E\varphi(z(\mathbf{1}-r_\alpha))|_E=\chi_E\psi(z-r_\alpha)\stackrel{t(L^\infty(E))}{\longrightarrow}0.
  $$
 Also
 $$
 \mathcal{D}(z_\alpha q_\alpha)=\mathcal{D}(zr_\alpha q_\alpha)=\chi_E\mathcal{D}(r_\alpha q_\alpha),
 $$
  and so $\mathcal{D}(z_\alpha q_\alpha)\stackrel{t(L^\infty(\Omega))}{\longrightarrow}0$, since $\mathcal{D}(r_\alpha q_\alpha)|_E=\mathcal{D}_z(r_\alpha q_\alpha)\stackrel{t(L^\infty(E))}{\longrightarrow}0$.
 Again appealing to Proposition~\ref{plm-spk1}(i), we conclude that $q_\alpha\stackrel{t(\mathcal{M})}{\longrightarrow}0$.

Now let $\{x_\alpha\}\subset LS(z\mathcal{M})$ and $x_\alpha
\stackrel{t(\mathcal{M})}{\longrightarrow}0$. By
Proposition~\ref{plm-spk1}(ii), we have that
$E^\bot_\lambda(|x_\alpha|)
\stackrel{t(\mathcal{M})}{\longrightarrow}0$ for any $\lambda>0$,
where $\{E_\lambda(|x_\alpha|)\}$ is the spectral family for
$|x_\alpha|$. Denote by $\{E^z_\lambda(|x_\alpha|)\}$ the family
of spectral projections for $|x_\alpha|$ in $LS(z\mathcal{M})$,
$\lambda>0$. It is clear that
$E_\lambda(|x_\alpha|)=z^\bot+E^z_\lambda(|x_\alpha|)$ and
$E^\bot_\lambda(|x_\alpha|)=z-E^z_\lambda(|x_\alpha|)$ for all
$\lambda>0$.  It follows from above that
$z-E^z_\lambda(|x_\alpha|)
\stackrel{t(z\mathcal{M})}{\longrightarrow}0$ for all $\lambda>0$.
Hence, by Proposition~\ref{plm-spk1}(ii), it follows that
$x_\alpha \stackrel{t(z\mathcal{M})}{\longrightarrow}0$.

The proof of the implication $x_\alpha
\stackrel{t(z\mathcal{M})}{\longrightarrow}0\Longrightarrow x_\alpha \stackrel{t(\mathcal{M})}{\longrightarrow}0$ is similar and therefore omitted.
\end{proof}

The lattice $\mathcal{P}(\mathcal{M})$ is said to have a countable type, if every family of non-zero pairwise orthogonal projections in
$\mathcal{P}(\mathcal{M})$ is, at most, countable.  A von Neumann algebra is said to be $\sigma$-finite, if the lattice $\mathcal{P(M)}$ has a countable type.
 It is shown in \cite[Lemma 1.1]{Seg}
that a finite von Neumann algebra  $\mathcal{M}$ is $\sigma$-finite, provided that the
lattice $\mathcal{P}(\mathcal{Z}(\mathcal{M}))$ of central projections has a countable type.

If $\mathcal{M}$ is a commutative von Neumann algebra and
$\mathcal{P}(\mathcal{M})$ has a countable type, then  $\mathcal{M}$ is
$*$-isomorphic to a $*$-algebra $L^\infty(\Omega,\Sigma,\mu)$ with
$\mu(\Omega)<\infty$. In this case, the topology
$t(L^\infty(\Omega))$ is metrizable and has a base of neighborhoods
of $0$ consisting of the sets $W(\Omega,1/n,1/n),\ n\in\mathbb{N}$.
In addition, $f_n
\stackrel{t(L^\infty(\Omega))}{\longrightarrow}0\Leftrightarrow f_n\longrightarrow 0$ in measure $\mu$, where $f_n,f\in L^0(\Omega,\Sigma,\mu)=LS(\mathcal{M})$.

Let $\mathcal{M}$ be a commutative von Neumann algebra such that
$\mathcal{P}(\mathcal{M})$ does not have a countable type. Denote by
$\varphi$ a $*$-isomorphism from $\mathcal{M}$ on
$L^\infty(\Omega,\Sigma,\mu)$, where $\mu$ is a measure with the
direct sum property. Due to the latter property, there exists a
family $\{z_i\}_{i\in I}$ of non-zero pairwise orthogonal
projections from $\mathcal{P}(\mathcal{M})$, such that $\sup_{i\in
I} z_i=\mathbf{1}$ and $\mu(\varphi(z_i))<\infty$ for all
 $i\in I$, in particular,
$\mathcal{P}(z_i\mathcal{Z}(\mathcal{M}))$ has a countable type.
Select $A_i\in\Sigma$ so that $\varphi(z_i)=\chi_{A_i}$ and set $$\Sigma_{A_i}=\{A\cap A_i:\ A\in\Sigma\},\ \mu_i(A\cap
A_i)=\mu(A\cap A_i),\ i\in I.$$ Let $t(L^\infty(A_i))$ be the local
measure topology on $L^0(A_i,\Sigma_{A_i},\mu_i)$. Since
$\mu_i(A_i)<\infty$, we see that the topology $t(L^\infty(A_i))$ coincides with the topology of convergence in measure $\mu_i$ in
$L^0(A_i,\Sigma_{A_i},\mu_i)$.

\begin{proposition}
\label{p4} For a net $\{f_\alpha\}_{\alpha\in A}$ and $f$ from
$L^0(\Omega,\Sigma,\mu)$ the following conditions are equivalent:

(i). $f_\alpha\stackrel{t(L^\infty(\Omega))}{\longrightarrow} f$ ;

(ii).
$f_\alpha\chi_{A_i}\stackrel{t(L^\infty(A_i))}{\longrightarrow}
f\chi_{A_i}$ for all $i\in I$.
\end{proposition}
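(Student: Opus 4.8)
The plan is to unwind both topologies into statements about convergence in measure and then, on each set of finite measure, to replace the uncountable index set $I$ by a countable subfamily. By the definition recalled before Proposition \ref{p4}, $f_\alpha\stackrel{t(L^\infty(\Omega))}{\longrightarrow}f$ means exactly that $f_\alpha\chi_B\to f\chi_B$ in measure $\mu$ for every $B\in\Sigma$ with $\mu(B)<\infty$; and since $\mu_i(A_i)=\mu(A_i)<\infty$, the convergence $f_\alpha\chi_{A_i}\stackrel{t(L^\infty(A_i))}{\longrightarrow}f\chi_{A_i}$ is nothing but $f_\alpha\chi_{A_i}\to f\chi_{A_i}$ in measure $\mu_i$. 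Because $\mu$ and $\mu_i$ agree on subsets of $A_i$ and the function $(f_\alpha-f)\chi_{A_i}$ is supported in $A_i$, convergence in $\mu_i$-measure on $A_i$ is the same as $\mu$-measure convergence of $(f_\alpha-f)\chi_{A_i}$ on $\Omega$.

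Granting this, the implication (i)$\Rightarrow$(ii) is immediate: one applies the definition of (i) with the admissible set $B=A_i$ (here $\mu(A_i)<\infty$) to obtain $f_\alpha\chi_{A_i}\to f\chi_{A_i}$ in measure $\mu$, which is (ii). The whole content therefore lies in (ii)$\Rightarrow$(i). So I fix $B\in\Sigma$ with $\mu(B)<\infty$ and must show $f_\alpha\chi_B\to f\chi_B$ in measure $\mu$.

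The crucial preliminary step is to cover $B$, up to a $\mu$-null set, by countably many of the $A_i$. Since the $z_i$ are pairwise orthogonal, the $A_i$ are pairwise disjoint modulo $\mu$, so the sets $B\cap A_i$ are pairwise disjoint with total measure at most $\mu(B)<\infty$; hence $J:=\{i\in I:\ \mu(B\cap A_i)>0\}$ is at most countable, because for each $n$ only finitely many indices can satisfy $\mu(B\cap A_i)>1/n$. Writing $A:=\bigcup_{i\in J}A_i$, I claim $\mu(B\setminus A)=0$. Indeed, put $C:=B\setminus A$ and suppose $\mu(C)>0$. From $\sup_{i\in I}z_i=\mathbf{1}$ we get $\sup_{i\in I}\chi_{A_i}=\chi_\Omega$ in the measure algebra $\varphi(\mathcal{M})$, so $\chi_C\leq\sup_{i\in I}\chi_{A_i}$. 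If $\mu(C\cap A_i)=0$ for every $i$, then $\chi_{A_i}\leq\chi_{\Omega\setminus C}$ for every $i$, whence $\chi_C\leq\sup_{i\in I}\chi_{A_i}\leq\chi_{\Omega\setminus C}$ and thus $\mu(C)=0$, a contradiction; hence $\mu(C\cap A_i)>0$ for some $i$. But $i\in J$ forces $A_i\subseteq A$ and so $\mu(C\cap A_i)=0$, while $i\notin J$ forces $\mu(C\cap A_i)\leq\mu(B\cap A_i)=0$; in either case a contradiction. Therefore $\mu(B\setminus A)=0$, so $B=\bigcup_{i\in J}(B\cap A_i)$ modulo $\mu$ and $\mu(B)=\sum_{i\in J}\mu(B\cap A_i)$.

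With this in hand the conclusion follows by a tail estimate exploiting the directedness of the net. Enumerate $J=\{i_1,i_2,\dots\}$ and, given $\varepsilon,\delta>0$, choose $N$ with $\sum_{k>N}\mu(B\cap A_{i_k})<\delta/2$. By (ii) each $f_\alpha\chi_{A_{i_k}}\to f\chi_{A_{i_k}}$ in $\mu$-measure, so for each $k\leq N$ there is $\alpha_k\in A$ with $\mu(\{\omega\in A_{i_k}:\ |f_\alpha(\omega)-f(\omega)|>\varepsilon\})<\delta/(2N)$ for all $\alpha\geq\alpha_k$; choosing $\alpha_0$ above $\alpha_1,\dots,\alpha_N$ and splitting $B=\big(\bigcup_{k\leq N}(B\cap A_{i_k})\big)\cup\big(B\setminus\bigcup_{k\leq N}A_{i_k}\big)$ yields $\mu(\{\omega\in B:\ |f_\alpha(\omega)-f(\omega)|>\varepsilon\})<\delta/2+\delta/2=\delta$ for every $\alpha\geq\alpha_0$. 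As $B$ was an arbitrary set of finite measure, (i) follows. I expect the main obstacle to be exactly the covering claim $\mu(B\setminus A)=0$: passing from the uncountable family $\{A_i\}_{i\in I}$ to a countable subfamily that essentially exhausts the finite-measure set $B$ is what makes the equivalence work, and it is precisely here that the hypothesis $\sup_{i\in I}z_i=\mathbf{1}$, rather than mere pairwise disjointness together with finiteness, is genuinely used.
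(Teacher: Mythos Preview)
Your proof is correct and follows essentially the same route as the paper: both arguments reduce the verification of (ii)$\Rightarrow$(i) on a fixed set $B$ of finite measure to a finite subfamily of the $A_i$ that exhausts $B$ up to small (or null) measure, and then combine the finitely many convergences using directedness of the net. The paper phrases this via the neighborhood base $W(B,\varepsilon,\delta)$ and the monotone relation $\mu(B\cap D_\gamma)\uparrow\mu(B)$, whereas you work directly with convergence in measure and make the covering step $\mu(B\setminus\bigcup_{i\in J}A_i)=0$ explicit; the latter is actually the point the paper leaves implicit, so your treatment here is a bit more careful.
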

\begin{proof}
The implication $(i)\Rightarrow (ii)$ follows from the definitions of topologies
$t(L^\infty(\Omega))$ and $t(L^\infty(A_i))$.

$(ii)\Rightarrow (i)$. It is sufficient to consider the case when $f=0$.

Consider the set $\Gamma$ of all finite subsets $\gamma$
from  $I$ and order it with respect to inclusion. Consider an increasing net $\chi_{D_\gamma}\uparrow \chi_\Omega$ in $L^0_h(\Omega,\Sigma,\mu)$, where
$D_\gamma=\bigcup_{i\in\gamma} A_i,\ \gamma\in\Gamma$. Take an arbitrary neighborhood of zero $U$ (in the topology $t(L^\infty(\Omega))$ ) and select
 $W(B,\varepsilon,\delta)$ in such a way that  $W(B,\varepsilon,\delta)+W(B,\varepsilon,\delta)\subset U$.
Since $\mu(B\cap D_\gamma)\uparrow \mu(B)<\infty$, we can locate such $\gamma_0\in\Gamma$ that $\mu(B\setminus D_{\gamma_0})\leq
\delta$. Hence, $f_\alpha\chi_{\Omega\setminus
D_{\gamma_0}}\in W(B,\varepsilon,\delta)$ for all $\alpha\in A$.

Since
$f_\alpha\chi_{A_i}\stackrel{t(L^\infty(A_i))}{\longrightarrow} 0$
for all $i\in\gamma_0$ and $\gamma_0$ is a finite set, it follows
$f_\alpha\chi_{D_{\gamma_0}}=\sum_{i\in\gamma_0}f_\alpha\chi_{A_i}\stackrel{t(L^\infty(\Omega))}{\longrightarrow}
0$.

Thus, there exists such $\alpha_0\in A$ that
$f_\alpha\chi_{D_{\gamma_0}}\in W(B,\varepsilon,\delta)$ for all $\alpha\geq \alpha_0$.

In particular,
$$f_\alpha=f_\alpha\chi_{D_{\gamma_0}}+f_\alpha\chi_{\Omega\setminus
D_{\gamma_0}}\in
W(B,\varepsilon,\delta)+W(B,\varepsilon,\delta)\subset U,\ \forall\alpha\geq\alpha_0,$$ which implies the convergence
$f_\alpha\stackrel{t(L^\infty(\Omega))}{\longrightarrow} 0$.
\end{proof}

Let us now establish a variant of Proposition \ref{p4} for an arbitrary von Neumann algebra  $\mathcal{M}$.

Let $\varphi$ be a $*$-isomorphism from $\mathcal{Z}(\mathcal{M})$ onto
$L^\infty(\Omega,\Sigma,\mu)$ and let $\{z_i\}_{i\in I}$ be a central decomposition of the unity. As before, we denote $\Gamma$ the directed set of all finite subsets from  $I$. For every
$\gamma\in\Gamma$ we set $z^{(\gamma)}=\sum_{i\in\gamma} z_i$.

Since $\varphi(z_i)=\chi_{A_i}$ for some $A_i\in\Sigma$,
we see that $\varphi(z^{(\gamma)})=\chi_{D_\gamma}$, where
$D_\gamma=\bigcup_{i\in\gamma}A_i$, and, in addition, $z^{(\gamma)} \uparrow
\mathbf{1}$ which implies
$z^{(\gamma)}\stackrel{t(\mathcal{M})}{\longrightarrow} \mathbf{1}$
(see Proposition \ref{plm-spk1} (i) for $p_\alpha=z_\alpha^\bot$ ). As it the proof of
Proposition \ref{p4} for a given $V(B,\varepsilon,\delta)$
we choose $\gamma_0\in\Gamma$ such that
$x(\mathbf{1}-z^{(\gamma_0)})\in V(B,\varepsilon,\delta)$ for every
$x\in LS(\mathcal{M})$. If $x_\alpha\in LS(\mathcal{M})$ and
$x_\alpha z_i\stackrel{t(z_i\mathcal{M})}{\longrightarrow} 0$ for all
 $i\in I$, then by Proposition \ref{p3}, we have $x_\alpha z_i
\stackrel{t(\mathcal{M})}{\longrightarrow} 0$ for all $i\in I$, and so
 $x_\alpha z^{(\gamma_0)}=\sum_{i\in\gamma_0} x_\alpha
z_i\stackrel{t(\mathcal{M})}{\longrightarrow} 0$.

Hence, there exists such $\alpha_0\in A$ that $x_\alpha
z^{(\gamma_0)}\in V(B,\varepsilon,\delta)$ for all
$\alpha\geq\alpha_0$. This means that $$x_\alpha=x_\alpha
z^{(\gamma_0)}+x_\alpha (\mathbf{1}-z^{(\gamma_0)})\in
V(B,\varepsilon,\delta)+V(B,\varepsilon,\delta)\subset
V(B,2\varepsilon,2\delta).$$

The argument above justifies the following result.

\begin{proposition}
\label{p5} Let $\mathcal{M}$ be an arbitrary von Neumann algebra,
$x_\alpha,x\in LS(\mathcal{M})$, $0\neq z_i\in
\mathcal{P}(\mathcal{Z}(\mathcal{M})),\ z_iz_j=0$ when $i\neq j$,
$\sup_{i\in I} z_i=\mathbf{1}$. The following conditions are equivalent:

(i). $x_\alpha\stackrel{t(\mathcal{M})}{\longrightarrow} x$;

(ii). $z_ix_\alpha\stackrel{t(z_i\mathcal{M})}{\longrightarrow}
z_ix$ for any $i\in I$.
\end{proposition}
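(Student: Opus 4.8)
The plan is to reduce everything to the case $x=0$ by exploiting the translation invariance of the vector topologies $t(\mathcal{M})$ and $t(z_i\mathcal{M})$, so that (i) and (ii) become assertions about nets converging to zero. For the easy implication (i) $\Rightarrow$ (ii), suppose $x_\alpha\stackrel{t(\mathcal{M})}{\longrightarrow}0$. Since $(LS(\mathcal{M}),t(\mathcal{M}))$ is a topological $*$-algebra, left multiplication by the fixed central projection $z_i$ is $t(\mathcal{M})$-continuous, whence $z_ix_\alpha\stackrel{t(\mathcal{M})}{\longrightarrow}0$ for each $i\in I$; applying Proposition \ref{p3} with $z=z_i$ to the net $\{z_ix_\alpha\}$ then converts this into $z_ix_\alpha\stackrel{t(z_i\mathcal{M})}{\longrightarrow}0$, which is (ii).

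The substantial direction is (ii) $\Rightarrow$ (i), and here I would follow exactly the argument assembled in the paragraph preceding the statement. Fix a $*$-isomorphism $\varphi:\mathcal{Z}(\mathcal{M})\to L^\infty(\Omega,\Sigma,\mu)$, write $\varphi(z_i)=\chi_{A_i}$, and for $\gamma$ in the directed family $\Gamma$ of finite subsets of $I$ put $z^{(\gamma)}=\sum_{i\in\gamma}z_i$, so that $\varphi(z^{(\gamma)})=\chi_{D_\gamma}$ with $D_\gamma=\bigcup_{i\in\gamma}A_i$ and $z^{(\gamma)}\uparrow\mathbf{1}$. Given a basic neighbourhood $V(B,\varepsilon,\delta)$ of zero, the key preliminary step is a \emph{uniform tail estimate}: because $\mu(B\cap D_\gamma)\uparrow\mu(B)<\infty$, one selects $\gamma_0\in\Gamma$ with $\mu(B\setminus D_{\gamma_0})\leq\delta$, and then taking $z=p=z^{(\gamma_0)}$ as the witnessing projections in the definition of $V(B,\varepsilon,\delta)$ one checks that $x(\mathbf{1}-z^{(\gamma_0)})\cdot z^{(\gamma_0)}=0$, that $\varphi((z^{(\gamma_0)})^\bot)=\chi_{\Omega\setminus D_{\gamma_0}}\in W(B,\varepsilon,\delta)$, and that $\mathcal{D}(z^{(\gamma_0)}(z^{(\gamma_0)})^\bot)=0$. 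This shows $x(\mathbf{1}-z^{(\gamma_0)})\in V(B,\varepsilon,\delta)$ for \emph{every} $x\in LS(\mathcal{M})$, independently of the net.

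With the tail estimate in hand the conclusion follows quickly. By hypothesis $z_ix_\alpha\stackrel{t(z_i\mathcal{M})}{\longrightarrow}0$, so Proposition \ref{p3} gives $z_ix_\alpha\stackrel{t(\mathcal{M})}{\longrightarrow}0$ for each $i$; since $\gamma_0$ is finite, the partial sum $x_\alpha z^{(\gamma_0)}=\sum_{i\in\gamma_0}z_ix_\alpha\stackrel{t(\mathcal{M})}{\longrightarrow}0$. Hence there is $\alpha_0$ with $x_\alpha z^{(\gamma_0)}\in V(B,\varepsilon,\delta)$ for all $\alpha\geq\alpha_0$, and the splitting
$$x_\alpha=x_\alpha z^{(\gamma_0)}+x_\alpha(\mathbf{1}-z^{(\gamma_0)})\in V(B,\varepsilon,\delta)+V(B,\varepsilon,\delta)\subseteq V(B,2\varepsilon,2\delta)$$
yields $x_\alpha\stackrel{t(\mathcal{M})}{\longrightarrow}0$, establishing (i).

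I expect the main obstacle to be precisely the passage from the possibly uncountable decomposition $\{z_i\}_{i\in I}$ to a single finite partial sum $z^{(\gamma_0)}$. Termwise convergence $z_ix_\alpha\to 0$ does not by itself force $x_\alpha\to 0$, so the uniform tail estimate on the complementary central projection $\mathbf{1}-z^{(\gamma_0)}$ is unavoidable; verifying it requires unwinding the definition of $V(B,\varepsilon,\delta)$ and using that $\mathbf{1}-z^{(\gamma_0)}$ is central with $W$-small image under $\varphi$. The transfer between $t(z_i\mathcal{M})$ and $t(\mathcal{M})$ furnished by Proposition \ref{p3} is then what allows the finite sum to be treated entirely inside $(LS(\mathcal{M}),t(\mathcal{M}))$, after which finiteness of $\gamma_0$ does the rest.
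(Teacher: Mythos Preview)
Your proposal is correct and follows essentially the same approach as the paper: the reduction to $x=0$, the use of Proposition~\ref{p3} for the easy direction, and for (ii)$\Rightarrow$(i) the choice of a finite $\gamma_0$ giving the uniform tail estimate $x(\mathbf{1}-z^{(\gamma_0)})\in V(B,\varepsilon,\delta)$, followed by the splitting $x_\alpha=x_\alpha z^{(\gamma_0)}+x_\alpha(\mathbf{1}-z^{(\gamma_0)})$, are exactly what the paper does in the paragraph preceding the statement. Your explicit verification of the tail estimate with witnesses $p=z=z^{(\gamma_0)}$ just spells out what the paper abbreviates by referring back to the proof of Proposition~\ref{p4}.
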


\begin{remark}
\label{r1}
From Propositions \ref{Saito} and \ref{p5} it follows that the topology $t(\mathcal{M})$ coincides with the Tikhonov product of topologies $t(z_i\mathcal{M}),\ i\in I$.  In particular, the isomorphism $\phi$ from Proposition \ref{Saito} is a topological $*$-isomorphism from $(LS(\mathcal{M}),t(\mathcal{M}))$ onto $\prod_{i\in I}(LS(z_i\mathcal{M}),t(z_i\mathcal{M}))$.
\end{remark}

Let $\{z_i\}_{i\in I}$ be the same as in the assumption of Proposition
\ref{p5}, let $T: LS(\mathcal{M})\rightarrow LS(\mathcal{M})$ be a linear operator such that
 $T(z_ix)=z_iT(x)$ for all $x\in
LS(\mathcal{M}),\ i\in I$. It is clear that $T_{z_i}(y):=T(y)$, $y\in
LS(z_i\mathcal{M})$ is a linear operator acting in
$LS(z_i\mathcal{M})$. Due to Proposition \ref{p5}, the next result follows immediately.

\begin{corollary}
\label{c1} Let $\mathcal{M}$ and let $\{z_i\}_{i\in I}$ satisfy the same assumptions of Proposition
 \ref{p5}, and let $T: LS(\mathcal{M})\rightarrow
LS(\mathcal{M})$ be a linear operator such that
$T(z_ix)=z_iT(x)$ for all $x\in LS(\mathcal{M}),\ i\in I$.
The following conditions are equivalent:

(i). The mapping $T: (LS(\mathcal{M}),t(\mathcal{M}))\rightarrow
 (LS(\mathcal{M}),t(\mathcal{M}))$ is continuous;

(ii). The mapping $T_{z_i}: (LS(z_i\mathcal{M}),t(z_i\mathcal{M}))\rightarrow
(LS(z_i\mathcal{M}),t(z_i\mathcal{M}))$ is continuous for every
$i\in I$.
\end{corollary}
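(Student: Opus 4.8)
The plan is to reduce the statement to the two convergence criteria already established, namely Propositions \ref{p3} and \ref{p5}. Since $T$ and each $T_{z_i}$ are linear and $t(\mathcal{M})$, $t(z_i\mathcal{M})$ are vector topologies, continuity of any of these maps is equivalent to continuity at the origin, i.e. to the requirement that null nets are sent to null nets; this is what I will verify. Before starting I would record that $T_{z_i}$ really is a self-map of $LS(z_i\mathcal{M})=z_i LS(\mathcal{M})$: for $y=z_ix\in LS(z_i\mathcal{M})$ one has $T_{z_i}(y)=T(z_ix)=z_iT(x)\in z_iLS(\mathcal{M})$, so no further well-definedness check is needed.

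For $(i)\Rightarrow(ii)$, fix $i\in I$ and a net $\{y_\alpha\}\subset LS(z_i\mathcal{M})$ with $y_\alpha\stackrel{t(z_i\mathcal{M})}{\longrightarrow}0$. Writing $y_\alpha=z_iy_\alpha$ and invoking Proposition \ref{p3} with $z=z_i$, I obtain $y_\alpha\stackrel{t(\mathcal{M})}{\longrightarrow}0$; continuity of $T$ then gives $T(y_\alpha)\stackrel{t(\mathcal{M})}{\longrightarrow}0$. Since $T(y_\alpha)=T_{z_i}(y_\alpha)=z_iT(y_\alpha)\in z_iLS(\mathcal{M})$, a second application of Proposition \ref{p3} converts this back into $T_{z_i}(y_\alpha)\stackrel{t(z_i\mathcal{M})}{\longrightarrow}0$, which is precisely continuity of $T_{z_i}$ at $0$.

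For $(ii)\Rightarrow(i)$, take $\{x_\alpha\}\subset LS(\mathcal{M})$ with $x_\alpha\stackrel{t(\mathcal{M})}{\longrightarrow}0$. By Proposition \ref{p5} this is equivalent to $z_ix_\alpha\stackrel{t(z_i\mathcal{M})}{\longrightarrow}0$ for every $i\in I$. Fixing $i$ and using continuity of $T_{z_i}$ yields $T_{z_i}(z_ix_\alpha)\stackrel{t(z_i\mathcal{M})}{\longrightarrow}0$; but $T_{z_i}(z_ix_\alpha)=T(z_ix_\alpha)=z_iT(x_\alpha)$, so $z_iT(x_\alpha)\stackrel{t(z_i\mathcal{M})}{\longrightarrow}0$ for all $i\in I$. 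Applying Proposition \ref{p5} in the reverse direction gives $T(x_\alpha)\stackrel{t(\mathcal{M})}{\longrightarrow}0$, so $T$ is continuous.

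I do not expect a genuine obstacle here: the whole content is carried by the equivalences of Propositions \ref{p3} and \ref{p5}, which already identify $t(\mathcal{M})$-convergence with coordinatewise $t(z_i\mathcal{M})$-convergence (cf. Remark \ref{r1}). The only point demanding care is the systematic translation between an element $y$ of $LS(z_i\mathcal{M})$ and its realization $z_i(\cdot)$ inside $LS(\mathcal{M})$, together with the observation that $T$ leaves each summand $z_iLS(\mathcal{M})$ invariant. Equivalently, one may phrase the argument purely topologically: under the $*$-isomorphism $\phi$ of Proposition \ref{Saito}, which by Remark \ref{r1} is a homeomorphism onto $\prod_{i\in I}(LS(z_i\mathcal{M}),t(z_i\mathcal{M}))$, the operator $T$ becomes the coordinatewise product map $\prod_{i\in I}T_{z_i}$, and a product map is continuous if and only if each of its factors is.
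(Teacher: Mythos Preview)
Your proof is correct and matches the paper's approach: the paper simply states that the corollary ``follows immediately'' from Proposition~\ref{p5}, and your argument is precisely the unpacking of that remark (with Proposition~\ref{p3} used for the single-coordinate direction, which is harmless since Proposition~\ref{p5} already subsumes it). Your final topological rephrasing via Remark~\ref{r1} is also exactly in the spirit of the paper.
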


\section{Continuity of derivations in $*$-algebra $LS(\mathcal{M})$}

Let $\mathcal{M}$ be an arbitrary von Neumann algebra, let $\mathcal{A}$ be a subalgebra in $LS(\mathcal{M})$. A linear mapping
 $\delta: \mathcal{A} \rightarrow LS(\mathcal{M})$
is called a \emph{derivation}  on $\mathcal{A}$ with values in $LS(\mathcal{M})$, if
$\delta(xy)=\delta(x)y+x\delta(y)$ for all $x,y\in
\mathcal{A}$. Each element $a\in \mathcal{A}$ defines a
derivation  $\delta_a(x):=ax-xa$ on $\mathcal{A}$ with values in $\mathcal{A}$.
Derivations $\delta_a,a\in\mathcal{A}$ are said to be
\emph{inner derivations} on $\mathcal{A}$.
Since the operation of multiplication is continuous with respect to the topology $t(\mathcal{M})$, it immediately follows that  any inner derivation of  $\mathcal{A}$ is continuous with respect to the topology $t(\mathcal{M})$.

Now, we list a few properties of derivations on $\mathcal{A}$ which we shall need below.

\begin{lemma}
\label{l1} If $\mathcal{P}(\mathcal{Z}(\mathcal{M}))\subset \mathcal{A}$, $\delta$ is a derivation on $\mathcal{A}$
and  $z\in\mathcal{P}(\mathcal{Z}(\mathcal{M}))$, then $\delta(z)=0$ and
$\delta(zx)=z\delta(x)$ for all $x\in \mathcal{A}$.
\end{lemma}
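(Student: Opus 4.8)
The plan is to establish $\delta(z)=0$ first, by exploiting the idempotence $z=z^2$ together with the centrality of $z$, and then to read off the identity $\delta(zx)=z\delta(x)$ as an immediate consequence of the derivation property. Before starting I would note that all the relevant expressions make sense: since $\mathcal{P}(\mathcal{Z}(\mathcal{M}))\subseteq\mathcal{A}$ the projection $z$ lies in the domain of $\delta$, and since $\mathcal{A}$ is a subalgebra containing both $z$ and $x$, the product $zx$ again belongs to $\mathcal{A}$, so $\delta(zx)$ is defined as well.

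First I would apply $\delta$ to the relation $z=z^2$ and invoke the Leibniz rule to get $\delta(z)=\delta(z)z+z\delta(z)$. The key observation is that $z$ is a \emph{central} projection, hence commutes with every element of $LS(\mathcal{M})$, in particular with $\delta(z)$; thus $\delta(z)z=z\delta(z)$ and the previous identity collapses to $\delta(z)=2z\delta(z)$. Next I would multiply this relation on the left by $z$; using $z^2=z$ on the left-hand side yields $z\delta(z)=2z\delta(z)$, so that $z\delta(z)=0$. Substituting this back into $\delta(z)=2z\delta(z)$ gives $\delta(z)=0$.

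Finally, for an arbitrary $x\in\mathcal{A}$ the derivation property gives $\delta(zx)=\delta(z)x+z\delta(x)=z\delta(x)$, which is the second assertion. I do not expect any substantial obstacle here; the only point that needs care is the repeated and systematic use of the centrality of $z$ (legitimate because $z\in\mathcal{Z}(\mathcal{M})$ commutes with all of $LS(\mathcal{M})$, including the image $\delta(z)$), since it is precisely this commutation that merges the two Leibniz terms and forces $\delta(z)$ to vanish.
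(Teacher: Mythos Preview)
Your proof is correct and follows essentially the same route as the paper: apply the Leibniz rule to $z=z^2$, use centrality of $z$ to obtain $\delta(z)=2z\delta(z)$, multiply by $z$ to deduce $z\delta(z)=0$, and hence $\delta(z)=0$, from which $\delta(zx)=z\delta(x)$ follows immediately. The paper's argument is identical in substance, only slightly more terse in its presentation.
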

\begin{proof}
We have that
$\delta(z)=\delta(z^2)=\delta(z)z+z\delta(z)=2z\delta(z)$.
Hence, $z\delta(z)=z(2z\delta(z))=2z\delta(z)$, that is
$z\delta(z)=0$. Therefore, we have $\delta(z)=0$. In particular,
$\delta(zx)=\delta(z)x+z\delta(x)=z\delta(x)$.
\end{proof}

Let $\mathcal{A}$ be an $*$-subalgebra in $LS(\mathcal{M})$, let $\delta$ be a derivation on $\mathcal{A}$ with values in $LS(\mathcal{M})$. Let us define a mapping
$$\delta^*: \mathcal{A}\rightarrow
LS(\mathcal{M}),$$
by setting $\delta^*(x)=(\delta(x^*))^*$, $x\in
\mathcal{A}$. A direct verification shows that $\delta^*$ is also a
derivation on $\mathcal{A}$.

A derivation $\delta$ on $\mathcal{A}$ is said to be
\emph{self-adjoint}, if $\delta=\delta^*$. Every
derivation $\delta$ on $\mathcal{A}$ can be represented in the form
$\delta= Re(\delta)+ i Im(\delta)$, where
$Re(\delta)=(\delta+\delta^*)/2,\ Im(\delta)=(\delta-\delta^*)/2i$
are self-adjoint derivations on $\mathcal{A}$.

Since $(LS(\mathcal{M}),t(\mathcal{M}))$ is a topological
 $*$-algebra, the following result holds.

\begin{lemma}
\label{l2} A derivation $\delta: \mathcal{A}\rightarrow
LS(\mathcal{M})$ is continuous with respect to the topology  $t(\mathcal{M})$ if and only if the self-adjoint derivations $Re(\delta)$
and $Im(\delta)$ are continuous with respect to that topology.
\end{lemma}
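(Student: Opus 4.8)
The plan is to reduce the statement to two elementary facts already available in the excerpt: that $t(\mathcal{M})$ is a \emph{vector} topology, so that finite $\mathbb{C}$-linear combinations of $t(\mathcal{M})$-continuous maps are again $t(\mathcal{M})$-continuous, and that the involution $x\mapsto x^*$ is $t(\mathcal{M})$-continuous on $LS(\mathcal{M})$. The decomposition $\delta = Re(\delta) + i\,Im(\delta)$ recorded just before the statement, together with the definition $\delta^*(x)=(\delta(x^*))^*$, will do the rest.

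For the implication from the continuity of $Re(\delta)$ and $Im(\delta)$ to that of $\delta$, I would simply observe that $\delta = Re(\delta) + i\,Im(\delta)$ exhibits $\delta$ as a $\mathbb{C}$-linear combination of two $t(\mathcal{M})$-continuous maps; since $t(\mathcal{M})$ is a vector topology, addition and scalar multiplication are continuous, so $\delta$ is $t(\mathcal{M})$-continuous.

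For the converse, I would first show that $\delta^*$ is $t(\mathcal{M})$-continuous whenever $\delta$ is. This follows from the formula $\delta^*(x) = (\delta(x^*))^*$, which realizes $\delta^*$ as the composition of the involution, the map $\delta$, and the involution once more; each factor being $t(\mathcal{M})$-continuous, so is $\delta^*$. Having both $\delta$ and $\delta^*$ continuous, the identities $Re(\delta) = (\delta+\delta^*)/2$ and $Im(\delta) = (\delta-\delta^*)/(2i)$ again express $Re(\delta)$ and $Im(\delta)$ as $\mathbb{C}$-linear combinations of continuous maps, hence both are $t(\mathcal{M})$-continuous.

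I do not anticipate any genuine obstacle here: the argument is purely formal once one knows that $(LS(\mathcal{M}),t(\mathcal{M}))$ carries a topological vector space structure with continuous involution, both of which are recorded in Section 2. The only point worth flagging is that the construction of $\delta^*$ requires $\mathcal{A}$ to be a $*$-subalgebra, so that $x^*\in\mathcal{A}$ for $x\in\mathcal{A}$ and $\delta(x^*)$ is meaningful; this is precisely the standing assumption under which $\delta^*$ was defined just above the statement.
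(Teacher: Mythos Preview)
Your proof is correct and is precisely the argument the paper has in mind: the paper itself offers no detailed proof, merely observing that the result holds ``since $(LS(\mathcal{M}),t(\mathcal{M}))$ is a topological $*$-algebra,'' which encapsulates exactly the two facts you invoke (continuity of the vector operations and of the involution). Your remark that this requires $\mathcal{A}$ to be a $*$-subalgebra is also apt and matches the standing hypothesis under which $\delta^*$ was defined.
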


As we already stated in the introduction, in the special case, when $\mathcal{M}$ is a properly infinite von Neumann algebra of type $I$ or von Neumann algebra of type $III$, any
derivation of the algebra $LS(\mathcal{M})$ is continuous with respect to the topology  $t(\mathcal{M})$  \cite{AK}. The next theorem extends this result to an arbitrary properly infinite von Neumann algebra.

\begin{theorem}
\label{main} If $\mathcal{M}$ properly infinite von Neumann algebras, then any derivation $\delta:
LS(\mathcal{M})\rightarrow LS(\mathcal{M})$ is continuous with respect to the topology $t(\mathcal{M})$ of local convergence in measure.
\end{theorem}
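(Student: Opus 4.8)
The plan is to peel off the already-known pieces by a central decomposition and then concentrate all the genuinely new work on the type $II_\infty$ summand, where I would argue by contradiction using the abundance of mutually orthogonal equivalent projections that a properly infinite algebra provides.

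First I would invoke Lemma \ref{l2} to assume that $\delta$ is self-adjoint. Since $\mathcal{M}$ is properly infinite, its type decomposition yields central projections $z_{I},z_{II},z_{III}$ (of types $I_\infty$, $II_\infty$, $III$, any of which may vanish) with $z_{I}+z_{II}+z_{III}=\mathbf 1$. Because $\mathcal{P}(\mathcal{Z}(\mathcal{M}))\subset LS(\mathcal{M})$, Lemma \ref{l1} gives $\delta(z_{k}x)=z_{k}\delta(x)$, so each $\delta_{z_k}$ is a derivation on $LS(z_k\mathcal{M})$ and Corollary \ref{c1} reduces the $t(\mathcal{M})$-continuity of $\delta$ to the $t(z_k\mathcal{M})$-continuity of $\delta_{z_k}$ on each homogeneous summand. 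The summands $z_{I}\mathcal{M}$ and $z_{III}\mathcal{M}$ are settled by \cite{AAK} and \cite{AK}, respectively, so the entire problem collapses to a properly infinite algebra of type $II_\infty$; from here I would assume $\mathcal{M}=z_{II}\mathcal{M}$ and let $\tau$ denote a faithful normal semifinite trace on it.

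For this core case I would fix a system of matrix units $\{e_{ij}\}_{i,j\ge 1}\subset\mathcal{M}$ with $e_{ii}\sim\mathbf 1$ and $\sum_i e_{ii}=\mathbf 1$, which exists precisely because $\mathcal{M}$ is properly infinite and realizes the amplification $\mathcal{M}\cong\mathcal{M}\,\overline{\otimes}\,B(\ell^2)$. The strategy is then a gliding-hump argument: if $\delta$ were $t(\mathcal{M})$-discontinuous, then using the spectral convergence criterion of Proposition \ref{plm-spk1}(ii) one extracts elements $y_n$, supported in pairwise orthogonal diagonal blocks $e_{k_n k_n}\mathcal{M}e_{k_n k_n}$, that tend to $0$ yet with $\delta(y_n)$ quantitatively bounded away from $0$. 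Proper infiniteness is exactly what supplies infinitely many such orthogonal slots; one then assembles a single $y\in LS(\mathcal{M})$ out of these humps, where existence of the assembled operator is guaranteed by completeness of $(LS(\mathcal{M}),t(\mathcal{M}))$ (and, where humps can be separated centrally, by the $\sum_i z_i x_i$ construction following Proposition \ref{Saito}). The derivation identity together with block orthogonality forces $\delta(y)$ to reproduce each large contribution $\delta(y_n)$ on its block, contradicting either $\delta(y)\in LS(\mathcal{M})$ or the $t(\mathcal{M})$-convergence of the partial sums; this is the same mechanism underpinning \cite{Ber} for $S(\mathcal{M},\tau)$, here adapted to the topology $t(\mathcal{M})$.

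The hard part will be this final type $II_\infty$ step. Unlike type $I$ there is no discrete dimension to exploit, and $t(\mathcal{M})$ is in general neither metrizable nor normable, so the construction must be run with nets and the explicit neighborhoods $V(B,\varepsilon,\delta)$ rather than with sequences. The real difficulty is to keep simultaneous control of the $C^\ast$-bounds $\|\cdot\|_{\mathcal{M}}$, of the central supports through the $\ast$-isomorphism $\varphi$, and of the dimension function $\mathcal{D}$ when stacking the humps, so that the assembled element genuinely lies in $LS(\mathcal{M})$ and produces a bona fide contradiction. I expect that a preliminary reduction, via a further central decomposition, to $\sigma$-finite pieces on which $t(\mathcal{M})$ becomes metrizable would be the cleanest way to replace nets by sequences in the gliding-hump step and thereby navigate this obstacle.
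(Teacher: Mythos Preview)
Your broad architecture---reduce to self-adjoint $\delta$, pass to $\sigma$-finite center so that $t(\mathcal{M})$ is metrizable and the closed graph theorem applies, then run a gliding-hump using infinitely many orthogonal equivalent copies---matches the paper's. The paper, however, does \emph{not} peel off types $I_\infty$ and $III$ via \cite{AAK,AK}; it gives a single uniform argument for any properly infinite $\mathcal{M}$. Your type-decomposition detour is legal but buys nothing: the hard step you describe for type $II_\infty$ is exactly the argument one has to run in general, and nothing specific to type $II_\infty$ (in particular, no trace $\tau$) is ever used.

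The genuine gap is in the sentence ``the derivation identity together with block orthogonality forces $\delta(y)$ to reproduce each large contribution $\delta(y_n)$ on its block.'' This is precisely where the difficulty lives, and it is not automatic. From $y_n=e_n y e_n$ one gets $e_n\delta(y)e_n=\delta(y_n)-\delta(e_n)ye_n-e_n y\delta(e_n)$, and the cross-terms involve the \emph{locally measurable}, generally unbounded operators $\delta(e_n)$ over which you have no a priori control; they can easily swamp the ``large contribution.'' The paper spends most of its effort here: one must first compress by carefully chosen projections $q_0^{(k)}$ to force these cross-terms into $\mathcal{M}$ with small norm, then prove an identification $q_0^{(k)}\delta(y_k)q_0^{(k)}=a_k$ by a left/right-support argument (since $\delta$ is not continuous, one cannot simply pass to the limit in $\delta(\sum\cdot)$). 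Even then a single level of humps does not close the argument: one obtains only $kq_0\le q_0\delta((k+1)y_k)q_0$ with $y_k$ varying in $k$, and a \emph{second} gliding-hump---placing rescaled copies of the $y_{l_n}$ in a fresh family of orthogonal blocks $e_n$, controlling $\delta(e_n)e_n\in S(\mathcal{M})$ via the dimension function, and assembling a single bounded $h_0$---is needed to reach the final contradiction $g_n\preceq\mathbf{1}-E_{n-1}(\delta(h_0))$ with $g_n$ properly infinite. Your sketch contains one level and treats the block-reproduction as routine; it is this second layer, together with the delicate bookkeeping of $\mathcal{D}$ and central supports you allude to, that is the missing idea.
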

\begin{proof}
By Lemma \ref{l2}, we may assume that $\delta^*=\delta$.
Since  $\mathcal{Z}(\mathcal{M})$ is a commutative von Neumann algebra, there exists a system $\{z_i\},\ i\in I$ of non-zero pairwise orthogonal projections from $\mathcal{Z}(\mathcal{M})$ such that $\sup_{i\in I}z_i=\mathbf{1}$ and the Boolean algebra $\mathcal{P}(z_i\mathcal{Z}(\mathcal{M}))$ has a countable type for all $i\in I$. By Lemma \ref{l1} we have that $\delta(z_ix)=z_i\delta(x)$ for all $x\in LS(\mathcal{M}), i\in I$. Therefore, by
Corollary \ref{c1}, it is sufficient to prove that each derivation $\delta_{z_i}$ is $t(z_i\mathcal{M})$-continuous, $i\in I$. Thus,  we may assume without loss of generality that the Boolean algebra   $\mathcal{P}(\mathcal{Z}(\mathcal{M}))$  has a countable type.

In this case the topology $t(\mathcal{M})$ is metrizable, and the sets
 $V(\Omega,1/n,1/n)$, $n\in\mathbb{N}$ form a countable base of neighborhoods
of $0$; in particular,
$(LS(\mathcal{M}),t(\mathcal{M}))$ is an $F$-space. Therefore it is sufficient to show that the graph of the linear operator  $\delta$ is a closed set.

Arguing by a contradiction, let us assume that the graph of
$\delta$ is not closed. This means that there exists a sequence
$\{x_n\}\subset LS(\mathcal{M})$, such that
$x_n\stackrel{t(\mathcal{M})}{\longrightarrow} 0$ and
$\delta(x_n)\stackrel{t(\mathcal{M})}{\longrightarrow} x\neq 0$.
Recalling that $(LS(\mathcal{M}),t(\mathcal{M}))$ is a topological
$*$-algebra and that $\delta=\delta^*$, we may assume
that  $x=x^*,\ x_n=x_n^*$ for all $n\in\mathbb{N}$. In this case,
$x=x_+-x_-$, where $x_+,x_-\in LS_+(\mathcal{M})$ are respectively
the positive and negative parts of $x$. Without loss of
generality, we shall also assume that $x_+\neq 0$, otherwise,
instead of the sequence $\{x_n\}$ we consider the sequence
$\{-x_n\}$. Let us select scalars $0<\lambda_1<\lambda_2$ so that
the projection
$$p:=E_{\lambda_2}(x)-E_{\lambda_1}(x)$$ does not vanish. We have that
$0<\lambda_1 p\leq pxp=px\leq\lambda_2 p$ and $\|px\|_\mathcal{M}\leq \lambda_2$. Replacing, if necessary, $x_n$ on $x_n/\lambda_1$, we may assume that
\begin{equation}
\label{pxp}
pxp\geq p.
\end{equation}

By the assumption, $\mathcal{M}$ is a properly infinite von Neumann algebra and therefore, there exist
pairwise orthogonal projections
$\{p_m^{(1)}\}_{m=1}^\infty\subset \mathcal{P}(\mathcal{M})$, such that
$\sup_{m\geq 1} p_m^{(1)}=\mathbf{1}$ and $p_m^{(1)}\sim \mathbf{1}$
for all $m\in\mathbb{N}$, in particular, $p\preceq p_m^{(1)}$. Here, the notation $p\sim q$ denotes the
equivalence of projections $p,q\in\mathcal{P}(\mathcal{M})$, and the notation
 $p\preceq q$ means that there exists a projection
$e\leq q$ such that $p\sim e$. In course of the proof of our main result we shall frequently use the following well-known fact: if $p\sim q$ and $z\in\mathcal{P}(\mathcal{Z}(\mathcal{M}))$ then $pz\sim qz$.

For every $m\in\mathbb{N}$ we select a projection $p_m\leq p_m^{(1)}$,
for which $p_m\sim p$ and denote by $v_m$ a partial isometry from
 $\mathcal{M}$ such that $v_m^*v_m=p,\
v_mv_m^*=p_m$. Clearly, we have $p_mp_k=0$ whenever $m\neq k$
and the projection
\begin{equation}
\label{p_0}
p_0:=\sup_{m\geq 1}p_m
\end{equation}
is infinite as the supremum of pairwise orthogonal and equivalent projections. Taking into account that
$$
p_m=v_mpv_m^*\stackrel{(\ref{pxp})}\leq v_mpxpv_m^*=v_mxv_m^*\in p_m\mathcal{M}p_m,
$$
and
$$ \|v_mxv_m^*\|_{\mathcal{M}}=\|v_mpxpv_m^*\|_{\mathcal{M}}\leq\|pxp\|_{\mathcal{M}}\leq \lambda_2,$$
we see that the series $\sum_{m=1}^\infty v_mxv_m^*$ converges  with respect to the topology $\tau_{so}$ to some operator $y\in\mathcal{M}$ satisfying
\begin{equation}
\label{yp_0}
\|y\|_{\mathcal{M}}=\sup_{m\geq 1}\|v_mxv_m^*\|_{\mathcal{M}}\leq
\|pxp\|_{\mathcal{M}},\ \text{and}\ y\geq p_0.
\end{equation}

In what follows, we shall assume that the central support $c(p_0)$ of the projection $p_0$ is equal to $\mathbf{1}$ (otherwise, we replace the algebra
 $\mathcal{M}$ with the algebra $c(p_0)\mathcal{M}$).

Let $\varphi$ be a $*$-isomorphism from $\mathcal{Z}(\mathcal{M})$ onto $L^\infty(\Omega,\Sigma,\mu)$. By the assumption, the Boolean algebra
$\mathcal{P}(\mathcal{Z}(\mathcal{M}))$ has a countable type, and so we may assume that  $\mu(\Omega)=\int_\Omega \mathbf{1}_{L^\infty(\Omega)} \, d\mu =1$, where $\mathbf{1}_{L^\infty(\Omega)}$ is the identity of the $*$-algebra $L^\infty(\Omega,\Sigma,\mu)$. In this case, the countable base of neighborhoods of $0$ in the topology
 $t(\mathcal{M})$ is formed by the sets
$V(\Omega,1/n,1/n),\ n\in\mathbb{N}$.

Recalling that we have  $x_n\stackrel{t(\mathcal{M})}{\longrightarrow} 0$ and $\delta(x_n)\stackrel{t(\mathcal{M})}{\longrightarrow} x$, we obtain
$$v_mx_nv_m^*\stackrel{t(\mathcal{M})}{\longrightarrow} 0,\
\delta(v_m)x_nv_m^*\stackrel{t(\mathcal{M})}{\longrightarrow} 0,\
v_m\delta(x_n)v_m^*\stackrel{t(\mathcal{M})}{\longrightarrow}
v_mxv_m^*$$ when $n\rightarrow\infty$ for every fixed
$m\in\mathbb{N}$.

Fix $k\in\mathbb{N}$, and using the convergence $v_mx_nv_m^*\stackrel{t(\mathcal{M})}\longrightarrow 0$ for $n\longrightarrow 0$, for each $m\in \mathbb{N}$ select an index $n_1(m,k)$ and projections $q^{(1)}_{m,n}\in\mathcal{P}(\mathcal{M})$, $z^{(1)}_{m,n}\in
\mathcal{P}(\mathcal{Z}(\mathcal{M}))$, such that $$\|v_mx_nv_m^*q^{(1)}_{m,n}\|_{\mathcal{M}}\leq 2^{-m}(k+1)^{-1};$$
$$\int_\Omega\varphi(\mathbf{1}-z^{(1)}_{m,n})d\mu\leq 3^{-1}2^{-m-k-1}$$ and $$\mathcal{D}(z^{(1)}_{m,n}(\mathbf{1}-q^{(1)}_{m,n}))\leq 3^{-1}2^{-m-k-1}\varphi(z^{(1)}_{m,n})$$ for all $n\geq n_1(m,k)$.

Similarly, using the convergence $\delta(v_m)x_nv_m^*\stackrel{t(\mathcal{M})}\longrightarrow 0$ (respectively, $v_m\delta(x_n)v_m^*\stackrel{t(\mathcal{M})}\longrightarrow v_mxv_m^*$) for $n\longrightarrow\infty$, for each $m\in\mathbb{N}$ select indexes $n_2(m,k)$ and $n_3(m,k)$ and projections $q^{(2)}_{m,n},q^{(3)}_{m,n}\in\mathcal{P}(\mathcal{M})$, $z^{(2)}_{m,n},z^{(3)}_{m,n}\in
\mathcal{P}(\mathcal{Z}(\mathcal{M}))$, such that $$\|\delta(v_m)x_nv_m^*q^{(2)}_{m,n}\|_{\mathcal{M}}\leq (3(k+1)2^m)^{-1}$$ (respectively, $\|(v_m\delta(x_n)v_m^*-v_mxv_m^*)q^{(3)}_{m,n}\|_{\mathcal{M}}\leq  (3(k+1)2^m)^{-1}$); $$\int_\Omega\varphi(\mathbf{1}-z^{(i)}_{m,n})d\mu\leq 3^{-1}2^{-m-k-1}$$ and $\mathcal{D}(\mathbf{1}-q^{(i)}_{m,n})\leq3^{-1}2^{-m-k-1}\varphi(z^{(i)}_{m,n})$, $i=2,3$, for all $n\geq n_2(m,k)$ (respectively, $n\geq n_3(m,k)$).

Set $n(m,k)=\max_{i=1,2,3} n_i(m,k)$, $z_m=\inf_{i=1,2,3} z^{(i)}_{m,n(m,k)}$, $q_m=\inf_{i=1,2,3} q^{(i)}_{m,n(m,k)}$. Due to the selection of projections $q_m\in\mathcal{P}(\mathcal{M})$, $z_m\in\mathcal{P}(\mathcal{Z}(\mathcal{M}))$ and indexes $n(m,k)$, we have that for each  $m\in\mathbb{N}$ inequalities hold
\begin{itemize}
\item[(A1)] $\|v_mx_{n(m,k)}v_m^*q_m\|_{\mathcal{M}}\leq
2^{-m}(k+1)^{-1}$;
\item[(A2)] $\|\delta(v_m)x_{n(m,k)}v_m^*q_m\|_{\mathcal{M}}\leq
(3(k+1)2^{m})^{-1}$;
\item[(A3)] $\|q_m(v_m\delta(x_{n(m,k)})v_m^*-v_mxv_m^*)\|_{\mathcal{M}}\leq
(3(k+1)2^{m})^{-1}$;
\item[(A4)] $\mathcal{D}(z_m(\mathbf{1}-q_m))\stackrel{(D6)}\leq \sum_{i=1}^3 \mathcal{D}(z_m(\mathbf{1}-q^{(i)}_{m,n(m,k)})) \leq 2^{-m-k-1}\varphi(z_m)$;
\item[(A5)] $1-\int_\Omega\varphi(z_m)d\mu=\int_\Omega\varphi(\mathbf{1}-z_m)d\mu\leq$
\begin{center}
   $\leq\sum_{i=1}^3\int_ \Omega\varphi(\mathbf{1}-z^{(i)}_{m,n(m,k)})d\mu\leq
2^{-m-k-1}.$
\end{center}
\end{itemize}

Fix $m_1,m_2\in\mathbb{N}$ with $m_1<m_2$ and set
$$q_{m_1,m_2}:=\inf_{m_1<m\leq m_2}q_m,\
z_{m_1,m_2}:=\inf_{m_1<m\leq m_2}z_m.$$

 Since $(\mathbf{1}-z_{m_1,m_2})=\sup\limits_{m_1<m\leq m_2}(\mathbf{1}-z_m)$ and $(\mathbf{1}-q_{m_1,m_2})=\sup\limits_{m_1<m\leq m_2}(\mathbf{1}-q_m)$, it follows that $\varphi(\mathbf{1}-z_{m_1,m_2})=\sup\limits_{m_1<m\leq m_2}\varphi(\mathbf{1}-z_m)$ and $\varphi(\mathbf{1}-q_{m_1,m_2})=\sup\limits_{m_1<m\leq m_2}\varphi(\mathbf{1}-q_m)$, and therefore
\begin{equation}
\label{m1m21}
1-\int_\Omega\varphi(z_{m_1,m_2})d\mu=\int_\Omega\varphi(\mathbf{1}-z_{m_1,m_2})d\mu\leq
\sum_{m=m_1+1}^{m_2}\int_\Omega\varphi(\mathbf{1}-z_m)d\mu\stackrel{(A5)}
\leq 2^{-m_1-k-1};
\end{equation}
\begin{equation}
\label{m1m22}
\mathcal{D}(z_{m_1,m_2}(\mathbf{1}-q_{m_1,m_2}))\stackrel{(D6)}\leq\sum_{m=m_1+1}^{m_2}\mathcal{D}(z_{m_1,m_2}(\mathbf{1}-q_m))
\stackrel{(A4)}\leq 2^{-m_1-k-1}\varphi(z_{m_1,m_2})
\end{equation}
and
\begin{equation}
\label{m1m23}
\|\sum_{m=m_1+1}^{m_2}(v_mx_{n(m,k)}v_m^*)q_{m_1,m_2}\|_{\mathcal{M}}\leq
\sum_{m=m_1+1}^{m_2}\|v_mx_{n(m,k)}v_m^*q_m\|_{\mathcal{M}}\stackrel{(A1)}\leq
2^{m_1}(k+1)^{-1}.
\end{equation}

Inequalities (\ref{m1m21})-(\ref{m1m23}) mean that the sequence $$S_{l,k}=\sum_{m=1}^lv_mx_{n(m,k)}v_m^*,\  l\geq 1$$ is a Cauchy sequence in the $F$-space $(LS(\mathcal{M},t(\mathcal{M})))$ for each fixed $k\in\mathbb{N}$. Consequently, there exists $y_k\in LS(\mathcal{M})$ such that $S_{l,k}\stackrel{t(\mathcal{M})}\longrightarrow y_k$ for
$l\longrightarrow\infty$,  i.e. the series
\begin{gather}\label{y_k}
y_k=\sum_{m=1}^\infty v_mx_{n(m,k)}v_m^*
\end{gather} converges in $LS(\mathcal{M})$ with respect to the topology $t(\mathcal{M})$. Since the involution is continuous in topology $t(\mathcal{M})$ and $S^*_{l,k}=S_{l,k}$, we conclude $y_k=y_k^*$.

Setting
\begin{equation}
\label{r_m}
r_m:=p_m\wedge q_m, \ m\in \mathbb{N},
\end{equation}
and using the relation $z_m(p_m-p_m\wedge q_m)\sim z_m(p_m\vee q_m-q_m)$ ( see e.g. \cite[ch. 5, Proposition 1.6]{Tak})
we have
\begin{equation}
\label{Dp_mq_m}
\begin{split}
\mathcal{D}(z_m(p_m-r_m))&=\mathcal{D}(z_m(p_m-p_m\wedge
q_m))\stackrel{(D3)}=\mathcal{D}(z_m(p_m\vee q_m - q_m))\cr \\&\leq
\mathcal{D}(z_m(\mathbf{1}-q_m))\stackrel{(A4)}\leq 2^{-m-k-1}\varphi(z_m).
\end{split}
\end{equation}
Setting
\begin{equation}
\label{q0z0}
q_0^{(k)}:=\sup_{m\geq 1} r_m,\ z_0^{(k)}:=\inf_{m\geq 1}z_m,
\end{equation}
we have (see (\ref{p_0}), (\ref{yp_0}) and (\ref{r_m}))
\begin{equation}
\label{p0q0}
 y \geq p_0 \geq q_0^{(k)},\ k\in\mathbb{N}.
\end{equation}

From (\ref{m1m21}) it follows that
\begin{equation}
\label{z0} 1-\int_\Omega\varphi(z_0^{(k)})d\mu=\int_\Omega\varphi(\mathbf{1}-z_0^{(k)})d\mu\leq 2^{-k-1}.
\end{equation}

Since $p_mp_j=0,\ m\neq j$, and $r_m\leq p_m$ (see (\ref{r_m})) we obtain $p_0-q_0^{(k)}=\sup_{m\geq 1}(p_m-r_m)$ and hence, by (\ref{Dp_mq_m}),
\begin{equation}
\label{Dz_kp0q0k} \mathcal{D}(z_0^{(k)}(p_0-q_0^{(k)}))\stackrel{(D6)}=\sum_{m=1}^\infty
\mathcal{D}(z_0^{(k)}(p_m-r_m))\stackrel{(\ref{Dp_mq_m})}\leq 2^{-k-1}\varphi(z_0^{(k)}).
\end{equation}

Due to (\ref{r_m}), we have $p_mq^{(k)}_0=r_mq^{(k)}_0=r_m=r_mq_m$ for all $m\in \mathbb{N}$. Hence,
$$v_mx_{n(m,k)}v_m^* q_0^{(k)}= v_mx_{n(m,k)}v_m^* p_mq_0^{(k)}= v_mx_{n(m,k)}v_m^* r_m$$
and
\begin{gather}
 \label{yq}\|y_k   q_0^{(k)}\|_{\mathcal{M}} =
\|(\sum_{m=1}^\infty v_mx_{n(m,k)}v_m^*)q_0^{(k)}\|_{\mathcal{M}}=
\notag\\
=
\|\sum_{m=1}^\infty v_mx_{n(m,k)}v_m^*q_0^{(k)}\|_{\mathcal{M}}\leq
\sup_{m\geq 1}\|
v_mx_{n(m,k)}v_m^* r_m\|_{\mathcal{M}}\leq
\notag\\
\leq
\sup_{m\geq 1}\|
v_mx_{n(m,k)}v_m^* q_m\|_{\mathcal{M}}\stackrel{(A1)}\leq (k+1)^{-1}.
\end{gather}

Using the properties of the derivation $\delta$ and equalities $p_nv_n=v_n,\ v_n^*=v_n^*p_n$ and (\ref{r_m}), (\ref{q0z0}), we have
\begin{equation*}
\begin{split}
q_0^{(k)} & \delta(v_mx_{n(m,k)}v_m^*)q_0^{(k)}\cr &=
q_0^{(k)}((\delta(v_mx_{n(m,k)}v_m^*)-v_mxv_m^*)+v_mxv_m^*)q_0^{(k)}\cr & =
(q_0^{(k)}\delta(v_m)x_{n(m,k)}v_m^*q_0^{(k)}+q_0^{(k)}v_mx_{n(m,k)}\delta(v_m^*)q_0^{(k)})\cr & +
q_0^{(k)}(v_m\delta(x_{n(m,k)})v_m^*-v_mxv_m^*)q_0^{(k)}+q_0^{(k)}(v_mxv_m^*)q_0^{(k)}\cr &
=
q_0^{(k)}\delta(v_m)x_{n(m,k)}v_m^*q_mr_m+r_mq_mv_mx_{n(m,k)}\delta(v_m^*)q_0^{(k)}\cr &+
r_mq_m(v_m\delta(x_{n(m,k)})v_m^*-v_mxv_m^*)q_mr_m+q_0^{(k)}(v_mxv_m^*)q_0^{(k)}.
\end{split}
\end{equation*}

Consider the following formal series suggested by the preceding

\begin{equation}
\label{s1}
\sum_{m=1}^\infty q_0^{(k)}\delta(v_m)x_{n(m,k)}v_m^*q_mr_m;
\end{equation}
\begin{equation}
\label{s2}
\sum_{m=1}^\infty r_mq_mv_mx_{n(m,k)}\delta(v_m^*)q_0^{(k)};
\end{equation}
\begin{equation}
\label{s3}
\sum_{m=1}^\infty r_mq_m(v_m\delta(x_{n(m,k)})v_m^*-v_mxv_m^*)q_mr_m;
\end{equation}
\begin{equation}
\label{s4}
\sum_{m=1}^\infty q_0^{(k)}(v_mx_{n(m,k)}v_m^*)q_0^{(k)}.
\end{equation}

By the condition (A2) the first series (\ref{s1}) and the second series (\ref{s2}) converge with respect to
the norm $\|.\|_{\mathcal{M}}$ to some elements
$a,b\in\mathcal{M}$ respectively and $\|a\|_{\mathcal{M}}\leq
(3(k+1))^{-1}$ and $\|b\|_{\mathcal{M}}\leq (3(k+1))^{-1}$.
Similarly, by the condition  (A3), the third series (\ref{s3})
also converges with respect to the norm
 $\|.\|_{\mathcal{M}}$ to some element $c\in\mathcal{M}$, satisfying $\|c\|_{\mathcal{M}}\leq
(3(k+1))^{-1}$. Finally, since $y=\sum_{m=1}^\infty v_mxv_m^*$
(the convergence of the latter series is taken in the $\tau_{so}$
topology), we see that the fourth series (\ref{s4}) converges with respect to the
topology $\tau_{so}$ to some element $q_0^{(k)}yq_0^{(k)}$. Hence, the
series
\begin{equation}\label{sq0deltaq0}
\sum_{m=1}^\infty q_0^{(k)}\delta(v_mx_{n(m,k)}v_m^*)q_0^{(k)}
\end{equation}
converges with respect to the
topology $\tau_{so}$ to some element $a_k\in\mathcal{M}$, and, in
addition, we have
\begin{equation}
\label{aq} \|a_k-q_0^{(k)}yq_0^{(k)}\|_{\mathcal{M}}\leq
(k+1)^{-1}.
\end{equation}

We shall show that
\begin{equation}
\label{qd} a_k=q_0^{(k)}\delta(y_k)q_0^{(k)},
\end{equation}
where $y_k=\sum_{m=1}^\infty v_mx_{n(m,k)}v_m^*$ (the convergence of the latter series is taken in the $t(\mathcal{M})$-topology (see (\ref{y_k})). Using (\ref{q0z0}) for any $m_1,m_2\in\mathbb{N}$
we have
\begin{equation*}
 \begin{split}
r_{m_1}q_0^{(k)}\delta(y_k)q_0^{(k)}r_{m_2}&=
\delta(r_{m_1}q_0^{(k)}y_k)q_0^{(k)}r_{m_2}-\delta(r_{m_1}q_0^{(k)})y_kq_0^{(k)}r_{m_2}\cr &
=\delta(r_{m_1}v_{m_1}x_{n(m_1,k)}v_{m_1}^*)r_{m_2}-\delta(r_{m_1})v_{m_2}x_{n(m_2,k)}v_{m_2}^*r_{m_2}.
\end{split}
\end{equation*}
Since the series $\sum_{m=1}^\infty q_0^{(k)}\delta(v_mx_{n(m,k)}v_m^*)q_0^{(k)}$ converges with respect to the topology $\tau_{so}$ (see \ref{sq0deltaq0}), it follows that the series $$\sum_{m=1}^\infty r_{m_1}(q_0^{(k)}\delta(v_mx_{n(m,k)}v_m^*)q_0^{(k)})r_{m_2}$$ also converges with respect to this topology (\cite[ch. VI]{R-S}), in addition, the following equalities hold
\begin{equation*}
 \begin{split}
r_{m_1}a_kr_{m_2}&=\sum_{m=1}^\infty r_{m_1}(q_0^{(k)}\delta(v_mx_{n(m,k)}v_m^*)q_0^{(k)})r_{m_2}\cr &\stackrel{(\ref{q0z0})}=\sum_{m=1}^\infty r_{m_1}\delta(v_mx_{n(m,k)}v_m^*)r_{m_2}\cr &=
 \sum_{m=1}^\infty (\delta(r_{m_1}v_mx_{n(m,k)}v_m^*)r_{m_2}-\delta(r_{m_1})v_mx_{n(m,k)}v_m^*r_{m_2})\cr &\stackrel{(\ref{r_m})}=
\delta(r_{m_1}v_{m_1}x_{n(m_1,k)}v_{m_1}^*)r_{m_2}-\delta(r_{m_1})v_{m_2}x_{n(m_2,k)}v_{m_2}^*r_{m_2},
\end{split}
\end{equation*}
which guarantees
\begin{equation}
\label{qk}
r_{m_1}q_0^{(k)}\delta(y_k)q_0^{(k)}r_{m_2}=r_{m_1}a_kr_{m_2}.
\end{equation}

Since $$r_{m_1}(\delta(y_k)-a_k)r_m\stackrel{(\ref{qk})}=0,$$ we see that for the right support $r(r_{m_1}(\delta(y_k)-a_k))$ of the operator $r_{m_1}(\delta(y_k)-a_k)$ satisfies the inequality   $$r(r_{m_1}(\delta(y_k)-a_k))\leq \mathbf{1}-r_m, m\in\mathbb{N},$$ and therefore $$r(r_{m_1}(\delta(y_k)-a_k))\leq \inf_{m\geq 1}(\mathbf{1}-r_m)\stackrel{(\ref{q0z0})}=\mathbf{1}-q_0^{(k)}.$$ Consequently, $r_{m_1}(\delta(y_k)-a_k)q_0^{(k)}=0$ for all $m_1\in\mathbb{N}$.

Similarly, using the left support of the operator $(\delta(y_k)-a_k)q_0^{(k)}$, we claim that $q_0^{(k)}(\delta(y_k)-a_k)q_0^{(k)}=0$.

Since $q_0^{(k)}a_kq_0^{(k)}=a_k$, the equality  (\ref{qd}) holds.

Thus, the inequality (\ref{aq}) can be restated as follows
\begin{equation}
\label{aqk}
\|q_0^{(k)}(\delta(y_k)-y)q_0^{(k)}\|_{\mathcal{M}}\leq
(k+1)^{-1}.
\end{equation}
It follows from the inequalities (\ref{yq}) and (\ref{aqk}), that
\begin{equation}
\label{ky} \|(k+1)q_0^{(k)}y_k\|_{\mathcal{M}}=\|(k+1)y_kq_0^{(k)}\|_{\mathcal{M}}\leq 1
\end{equation}
and
\begin{equation}
\label{yky}
\|q_0^{(k)}\delta((k+1)y_k)q_0^{(k)}-(k+1)q_0^{(k)}yq_0^{(k)}\|_{\mathcal{M}}\leq
1.
\end{equation}
Due to (\ref{yky}), and taking into account (\ref{p0q0}), we obtain
\begin{equation*}
 \begin{split}
(k+1)q_0^{(k)}-q_0^{(k)}\delta((k+1)y_k)q_0^{(k)}&\leq
(k+1)q_0^{(k)}yq_0^{(k)}-q_0^{(k)}\delta((k+1)y_k)q_0^{(k)}\cr &\leq
q_0^{(k)},
\end{split}
\end{equation*} that is
\begin{equation}
\label{yqky} kq_0^{(k)}\leq q_0^{(k)}\delta((k+1)y_k)q_0^{(k)}.
\end{equation}

Let us now consider the projections
\begin{equation}
\label{q0z01}q_0:=\inf_{k\geq 1} q_0^{(k)},\ z_0:=\inf_{k\geq 1} z_0^{(k)}.
\end{equation}

Using (\ref{p0q0}), (\ref{q0z01}) we have that $p_0-q_0=\sup_{k\geq 1} (p_0-q_0^{(k)})$.  Therefore, combining (\ref{Dz_kp0q0k}) and (\ref{q0z01}), we obtain
\begin{equation}
\label{Dq0z01}\mathcal{D}(z_0(p_0-q_0))=\mathcal{D}(\sup\limits_{k\geq 1}(z_0(p_0-q_0^{(k)})))\stackrel{(D6)}
\leq \sum_{k=1}^\infty
\mathcal{D}(z_0(p_0-q_0^{(k)}))\stackrel{(\ref{Dz_kp0q0k})}\leq \varphi(z_0),
\end{equation}
 that is the projection
$z_0(p_0-q_0)$ is finite (see (D1)). Moreover, due to inequalities
(\ref{ky}) (respectively, (\ref{yqky})), we have
\begin{equation}
\label{kyq}\|(k+1)q_0y_k\|_{\mathcal{M}}=\|(k+1)y_kq_0\|_{\mathcal{M}}\leq 1,\quad k\in\mathbb{N}
\end{equation}
(respectively,
\begin{equation}
\label{kqy}kq_0\leq q_0\delta((k+1)y_k)q_0,\quad k\in\mathbb{N}.)
\end{equation}

 Since $\varphi$ is a $*$-isomorphism from $\mathcal{Z(M)}$ onto $L^\infty(\Omega,\Sigma,\mu)$, by (\ref{z0}), we have that
$$\int_\Omega \varphi(\mathbf{1}-z_0)d\mu=\int_\Omega \sup_{k\geq 1}\varphi(\mathbf{1}-z_0^{(k)})d\mu\leq\sum_{k=1}^\infty \int_\Omega \varphi(\mathbf{1}-z_0^{(k)})d\mu \stackrel{(\ref{z0})}\leq 2^{-1},$$ in particular, $z_0\neq 0$.
 Since $\mathbf{1}=c(p_0)$ and $c(p_0z_0)=c(p_0)z_0=z_0\neq 0$, we have $z_0p_0\neq 0$, and therefore there exists such $n\in\mathbb{N}$ that $z_0p_n\neq 0$ (see (\ref{p_0})).
Since $z_0p_n\sim z_0p_m$, we have $z_0p_m\neq 0$ for all
$m\in\mathbb{N}$. Hence, $z_0p_0$ is an infinite projection.
Since the projection $z_0(p_0-q_0)$ is finite (see (\ref{Dq0z01})), we see that the projection $z_0q_0$ must be infinite. By \cite[Proposition 6.3.7]{KR},
there exists a central projection $$0\neq
e_0\in\mathcal{P}(\mathcal{Z}(\mathcal{M})),\ e_0\leq
z_0,$$ such that $e_0q_0$ is properly infinite,
in particular, there exist pairwise orthogonal projections
\begin{equation}
\label{e0}e_n\leq e_0q_0,\ e_n\sim e_0q_0
\end{equation}
for all
$n\in\mathbb{N}$ (see, for example, \cite[Proposition 2.2.4]{Sak}).  In addition,
\begin{equation}\label{int_e0q0}
\int_\Omega\varphi(c(q_0)e_0)\,d\mu\neq 0.
\end{equation}

For every $n\in\mathbb{N}$ the operator
$$b_n:=\delta(e_n)e_n$$ is locally measurable, and therefore there exists such a sequence
$\{z_m^{(n)}\} \subset\mathcal{P}(\mathcal{Z}(\mathcal{M}))$ that
$z_m^{(n)}\uparrow \mathbf{1}$ when $m\rightarrow\infty$ and $z_m^{(n)}b_n\in S(\mathcal{M})$ for all $m\in\mathbb{N}$.
 Since $\varphi(z_m^{(n)})\uparrow \varphi(\mathbf{1})=\mathbf{1}_{L^\infty(\Omega)}$ it follows that   $\int_\Omega\varphi(z_m^{(n)})d\mu \uparrow \mu(\mathbf{1}_{L^\infty(\Omega)})=1$ when
$m\rightarrow\infty$, and therefore, by (\ref{int_e0q0}), for every $n\in\mathbb{N}$ there exists such a projection  $z^{(n)}\in\mathcal{P}(\mathcal{Z}(\mathcal{M}))$,
that $z^{(n)}b_n\in S(\mathcal{M})$ and
\begin{equation}
\label{int}1-2^{-n-1}\int_\Omega\varphi(c(q_0)e_0)d\mu<\int_\Omega\varphi(z^{(n)})d\mu.
\end{equation}
Consider the central projection
$$g_0:=\inf_{n\geq 1}z^{(n)}.$$
  Since $z^{(n)}b_n\in S(\mathcal{M}), g_0=g_0z^{(n)}$ we have that 
$g_0b_n\in S(\mathcal{M})$ for all $n\in\mathbb{N}$. Due to (\ref{int}) we have
\begin{gather*}
\begin{split}1&-\int_\Omega\varphi(g_0)d\mu=\int_\Omega\varphi(\mathbf{1}-g_0)d\mu=\int_\Omega\sup\varphi(\mathbf{1}-z^{(n)})d\mu\leq\\& \sum_{n=1}^\infty\int_\Omega\varphi(\mathbf{1}-z^{(n)})d\mu=\sum_{n=1}^\infty(1-\int_\Omega\varphi(z^{(n)})d\mu)\leq 2^{-1}\int_\Omega\varphi(c(q_0)e_0)d\mu.
\end{split}
\end{gather*}

Consequently, $1-2^{-1}\int_\Omega\varphi(c(q_0)e_0)d\mu\leq \int_\Omega\varphi(g_0)d\mu$, and therefore
\begin{equation}
\label{phi}1+2^{-1}\int_\Omega\varphi(c(q_0)e_0)d\mu\leq \int_\Omega\varphi(g_0)d\mu+\int_\Omega\varphi(c(q_0)e_0)d\mu.
\end{equation}
 From (\ref{int_e0q0}) and inequality (\ref{phi}), it follows that $\int_\Omega \varphi(g_0c(q_0)e_0)d\mu>0$, i.e. $g_0c(q_0)e_0\neq 0$
and so $g_0e_0q_0\neq 0$. Since $e_0q_0$ is a properly infinite projection it follows that $g_0e_0q_0$ is a properly infinite projection. From the relationship $g_0e_n\stackrel{(\ref{e0})}\sim g_0e_0q_0$,
we see that the projection $g_0e_n$ is
also properly infinite for all $n\in\mathbb{N}$.  Since
$$c(g_0e_n)=g_0c(e_n)\stackrel{(\ref{e0})}\leq q_0c(q_0e_0)=g_0c(q_0)e_0,$$ it follows that $ze_n$ is
also properly infinite projection  for every $0\neq z \in
\mathcal{P}(\mathcal{Z}(\mathcal{M}))$ with $z\leq
g_0c(q_0)e_0$. Indeed, if $z'\in\mathcal{P(Z(M))}$ and $z'ze_n\neq 0$, then $0\neq z'ze_n=(z'zc(q_0)e_0)g_0e_n$, and therefore, since the projection $g_0e_n$ is properly infinite, we have $(z'zc(q_0)e_0)g_0e_n\notin\mathcal{P}_{fin}(\mathcal{M})$. Consequently, the projection $ze_n$ is also properly infinite.

Passing, if necessary to the algebra
$g_0c(q_0)e_0\mathcal{M}$, we may assume that
$g_0c(q_0)e_0=\mathbf{1}$. In this case, we also may assume that
$b_n\in S(\mathcal{M}),\ e_n\sim q_0$, $c(e_n)=\mathbf{1}$ and $ze_n$ is a properly infinite projection for every non-zero
$z\in\mathcal{P}(\mathcal{Z}(\mathcal{M}))$.

The assumption $b_n\in S(\mathcal{M})$ means that for every fixed $n\in\mathbb{N}$ there exists such a sequence
$\{p^{(n)}_m\}_{m=1}^\infty\subset
\mathcal{P}_{fin}(\mathcal{M})$, that $p^{(n)}_m\downarrow 0$ when
$m\rightarrow\infty$ and $b_n(\mathbf{1}-p^{(n)}_m)\in\mathcal{M}$
for all $m\in\mathbb{N}$. Since $\mathcal{D}(p^{(n)}_m)\in
L^0(\Omega,\Sigma,\mu)$ and $\mathcal{D}(p^{(n)}_m)\downarrow 0$ (see (D7)),  it follows that $\{\mathcal{D}(p_m^{(n)})\}_{n=1}^\infty$ converges in measure $\mu$ to zero. Consequently,
we may select a central projection  $f_n$ and a finite projection
$s_n=p_{m_n}^{(n)}\in \mathcal{P}_{fin}(\mathcal{M})$ as to guarantee
$\mathcal{D}(f_ns_n)<2^{-n}\varphi(f_n)$,
$1-2^{-n-1}<\int\varphi(f_n)d\mu$ and

\begin{equation}\label{f_nb_n}
 f_nb_n(\mathbf{1}-s_n)\in\mathcal{M}
\end{equation} for all
$n\in\mathbb{N}$.

Setting $$f:=\inf_{n\geq 1} f_n,\ s:=\sup_{n\geq 1}s_n,$$
we have that $$1/2<\int\varphi(f)d\mu,\quad
\mathcal{D}(fs)\stackrel{(D6)}\leq\sum_{n=1}^\infty\mathcal{D}(fs_n)\leq
\varphi(f).$$

This means that $f\neq 0$ and $fs\in
\mathcal{P}_{fin}(\mathcal{M})$ (see (D1)). In addition,  since $f\leq f_n, (\mathbf{1}-s)\leq (\mathbf{1}-s_n)$ from (\ref{f_nb_n}) it follows that 
$fb_n(\mathbf{1}-s)\in\mathcal{M}$ for all $n\in\mathbb{N}$.

Consider the projections $t=f(\mathbf{1}-s)$ and $g_n=f(e_n\wedge(\mathbf{1}-s)),\
n\in\mathbb{N}$. Clearly (see (\ref{e0})),
\begin{equation}
\label{gze} g_n\leq fe_n\leq q_0,\ b_ng_n\in\mathcal{M},\ g_n\leq t
\end{equation}
for all $n\in\mathbb{N}$, and also
$$fe_n-g_n=f(e_n-e_n\wedge(\mathbf{1}-s))\sim
f(e_n\vee(\mathbf{1}-s)-(\mathbf{1}-s))\leq fs,$$ that is
$fe_n-g_n\in \mathcal{P}_{fin}(\mathcal{M})$. Hence,
for every non-zero central projection  $z\leq f$, we have that the projection
 $ze_n-zg_n$ is finite. Since the projection $ze_n$
is infinite, the projection $zg_n$ is also infinite, i.e.
\begin{equation}
\label{zgn} zg_n\notin\mathcal{P}_{fin}(\mathcal{M})
\end{equation}
for any $0\neq z\in \mathcal{P}(\mathcal{Z}(\mathcal{M}))$ and
$n\in\mathbb{N}$.

Since $b_nt=fb_n(\mathbf{1}-s) \in \mathcal{M}$, we see that there exists such an increasing sequence
 $\{l_n\}\subset\mathbb{N}$ that
$l_n>n+2\|b_nt\|_{\mathcal{M}}$ for all $n\in\mathbb{N}$.
Appealing to the inequalities (\ref{kyq}), (\ref{gze}) and taking into account the equality $b_n=\delta(e_n)e_n$, we deduce
\begin{equation*}
\begin{split}
\|g_n({l_n}+1)y_{l_n}\delta(e_n)e_ng_n\|_{\mathcal{M}}&\leq
\|g_n({l_n}+1)y_{l_n}\|_{\mathcal{M}}\|\delta(e_n)e_ng_n\|_{\mathcal{M}}\cr &\leq
\|q_0({l_n}+1)y_{l_n}\|_{\mathcal{M}}\|\delta(e_n)e_nt\|_{\mathcal{M}}\cr &<
(l_n-n)/2.
\end{split}
\end{equation*}
Hence,
\begin{equation}\label{ineq_l_n-n}
\|g_ne_n\delta(e_n)({l_n}+1)y_{l_n}g_n+g_n({l_n}+1)y_{l_n}\delta(e_n)e_ng_n\|_{\mathcal{M}}\leq
l_n-n.
\end{equation}

 For every $x=x^*\in\mathcal{M}$ the inequalities $-\|x\|_\mathcal{M}\mathbf{1}\leq x\leq \|x\|_\mathcal{M}\mathbf{1}$ holds, in particular, $-g_n\|x\|_\mathcal{M}\leq q_nxq_n\leq g_n\|x\|_\mathcal{M}$. Hence, inequality (\ref{ineq_l_n-n}) implies that
\begin{equation}
\label{ge}g_ne_n\delta(e_n)({l_n}+1)y_{l_n}g_n+g_n({l_n}+1)y_{l_n}\delta(e_n)e_ng_n\geq
(n-l_n)g_n.
\end{equation}

Since $e_ne_m=0$ whenever $n\neq m$, we see (due to inequalities
(\ref{kyq}) and (\ref{gze})) that the series $\sum_{n=1}^\infty
e_n({l_n}+1)y_{l_n}e_n$ converges with respect to the topology $\tau_{so}$ to a
self-adjoint operator $h_0\in\mathcal{M}$, satisfying
$$\|h_0\|_{\mathcal{M}}\leq \sup_{n\geq
1}\|e_n({l_n}+1)y_{l_n}e_n\|_{\mathcal{M}}\leq 1.$$ Again
appealing to the inequalities (\ref{kqy}), (\ref{gze}) and
(\ref{ge}), we infer that
\begin{equation*}
\begin{split}
n g_n & =l_ng_n+(n-l_n)g_n  \cr &\leq
g_n({l_n}+1)\delta(y_{l_n})g_n+g_ne_n\delta(e_n)({l_n}+1)y_{l_n}g_n+g_n({l_n}+1)y_{l_n}\delta(e_n)e_ng_n\cr &=
({l_n}+1)(g_n\delta(y_{l_n})g_n+g_ne_n\delta(e_n)y_{l_n}g_n+g_ny_{l_n}\delta(e_n)e_ng_n)\cr &=
({l_n}+1)g_n\delta(e_ny_{l_n}e_n)g_n\cr &=
\delta(g_ne_n({l_n}+1)y_{l_n}e_n)g_n-\delta(g_n)e_n({l_n}+1)y_{l_n}e_ng_n\cr &=
\delta(g_nh_0)g_n-\delta(g_n)h_0g_n= g_n\delta(h_0)g_n.
\end{split}
\end{equation*}
 Thus,
\begin{equation}\label{ng_n}
n g_n\leq g_n\delta(h_0)g_n
\end{equation}  for every
$n\in\mathbb{N}$.

Set $g_n^{(0)}=g_n\wedge E_{n-1}(\delta(h_0)),\ n\in\mathbb{N}$, where $\{E_\lambda(\delta(h_0))\}$ is the spectral family of projections for self-adjoint operator $\delta(h_0)$. For every $n\in\mathbb{N}$ we have
\begin{gather*}
\begin{split}
ng_n^{(0)} & =ng_n^{(0)}g_ng_n^{(0)}\stackrel{(\ref{ng_n})}\leq g_n^{(0)}(g_n\delta(h_0)g_n)g_n^{(0)}\cr &=
g_n^{(0)}\delta(h_0)g_n^{(0)}=g_n^{(0)}E_{n-1}(\delta(h_0))\delta(h_0)g_n^{(0)}\cr & \leq
g_n^{(0)}(n-1)E_{n-1}(\delta(h_0))g_n^{(0)}=(n-1)g_n^{(0)}.
\end{split}
\end{gather*}
Hence, $g_n\wedge E_{n-1}(\delta(h_0))=g_n^{(0)}=0$ which implies $$g_n=g_n-g_n\wedge E_{n-1}(\delta(h_0))\sim g_n\vee E_{n-1}(\delta(h_0))-E_{n-1}(\delta(h_0))\leq \mathbf{1}-E_{n-1}(\delta(h_0)),$$ i.e. $g_n\preceq \mathbf{1}-E_{n-1}(\delta(h_0))$.

Then $g_n\stackrel{(\ref{gze})}\leq fg_n\preceq f(\mathbf{1}-E_{n-1}(\delta(h_0)))$, and therefore
\begin{equation}
\label{Dgn} \mathcal{D}(g_n)\stackrel{(D3)}\leq \mathcal{D}(f(\mathbf{1}-E_{n-1}(\delta(h_0))))
\end{equation}
for all $n\in\mathbb{N}$.

Since $|f\delta(h_0)|\in LS(\mathcal{M})$, we see that there exists
such a non-zero central projection $f_0\leq f$,
that $|f_0\delta(h_0)|\in S_h(\mathcal{M})$.
Hence, we may find such $\lambda_0>0$, that
$(f_0-E_{\lambda}(|f_0\delta(h_0)|))\in
\mathcal{P}_{fin}(\mathcal{M})$ for all $\lambda\geq \lambda_0$
(\cite[\S2.2]{MCh}), that is
$\mathcal{D}(f_0(\mathbf{1}-E_{\lambda}(|f_0\delta(h_0)|)))\in
L_+^0(\Omega,\Sigma,\mu)$ when $\lambda>\lambda_0$.

Since
$f_0(\mathbf{1}-E_{\lambda}(|f_0\delta(h_0)|))=
f_0(\mathbf{1}-E_{\lambda}(|\delta(h_0)|))$, we infer from (\ref{Dgn}) that
$$\mathcal{D}(f_0g_n)\in L_+^0(\Omega,\Sigma,\mu)$$ for all $n\geq
\lambda_0+1$ which contradicts with the property (D1) in the
definition of the dimension function $\mathcal{D}$, since
$f_0g_n$ is an infinite projection (see (\ref{zgn})).

Hence, our assumption that the derivation
$\delta$ fails to be continuous in
$(LS(\mathcal{M}),t(\mathcal{M}))$ has led to a contradiction.
\end{proof}

Observe that in the special case of properly infinite von Neumann algebras of type
 $I$ or $III$ , Theorem \ref{main} gives a new proof of the results concerning the continuity of a derivation of $(LS(\mathcal{M}),t(\mathcal{M}))$ established earlier in  \cite{AAK,AK,BdPS}.

\section{Extension of a derivation $\delta : \mathcal{M}\rightarrow LS(\mathcal{M})$ up to a derivation on $LS(\mathcal{M})$}

In this section the construction of extension of any derivation, acting on a von Neumann algebra $\mathcal{M}$ with values in $LS(\mathcal{M})$, up to a derivation from $LS(\mathcal{M})$ into $LS(\mathcal{M})$ is given. Using this extension and Theorem \ref{main} it is established that in case the of a properly infinite von Neumann algebra $\mathcal{M}$, any derivation $\delta: \mathcal{A}\longrightarrow LS(\mathcal{M})$ from a subalgebra $\mathcal{A}$ satisfying $\mathcal{M}\subset\mathcal{A}\subset LS(\mathcal{M})$ is continuous with respect to the local measure topology.

Let $\mathcal{\mathcal{M}}$ be an arbitrary von Neumann algebra and let $\{z_n\}_{n=1}^\infty$ be a sequence of central projections from $\mathcal{\mathcal{M}}$, such that $z_n\uparrow \mathbf{1}$. A sequence $\{x_n\}_{n=1}^\infty$ is called \emph{consistent} with the sequence $\{z_n\}_{n=1}^\infty$, if for any $n,m\in\mathbb{\mathbb{N}}$ the equality $x_mz_n=x_nz_n$ holds for $n<m$.

\begin{proposition}
\label{p8} Let $\{x_n\}_{n=1}^\infty\subset LS(\mathcal{M})$ (respectively, $\{y_n\}_{n=1}^\infty\subset LS(\mathcal{M})$) be a sequence consistent with the sequence $\{z_n\}_{n=1}^\infty\subset\mathcal{P}(\mathcal{Z}(\mathcal{M}))$ (respectively, with the sequence $\{z'_n\}_{n=1}^\infty\subset\mathcal{P}(\mathcal{Z}(\mathcal{M}))$), $z_n\uparrow \mathbf{1}$ ($z'_n\uparrow \mathbf{1}$). Then

(i). There exists a unique $x\in LS(\mathcal{M})$, such that $xz_n=x_nz_n$ for all $n\in\mathbb{N}$, in addition, $x_n\stackrel{t(\mathcal{M})}{\longrightarrow} x$;

(ii). If $x_nz_nz'_m=y_mz_nz'_m$ for all $n,m\in\mathbb{N}$, then $(x_nz_n-y_nz'_n)\stackrel{t(\mathcal{M})}{\longrightarrow} 0$ for $n\rightarrow\infty$.
\end{proposition}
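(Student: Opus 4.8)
The plan is to derive both parts from Proposition~\ref{Saito} together with the explicit description of the basic neighborhoods $V(B,\varepsilon,\delta)$ of $t(\mathcal{M})$. Two elementary facts will do most of the work: (a) \emph{annihilation} --- if $a\in LS(\mathcal{M})$ and $az_n=0$ for all $n$ with $z_n\uparrow\mathbf 1$ central, then $a=0$; and (b) \emph{vanishing in measure} --- if $w_n\in LS(\mathcal M)$ satisfy $z_nw_n=0$ with $z_n\uparrow\mathbf 1$ central, then $w_n\stackrel{t(\mathcal M)}{\longrightarrow}0$.

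For part (i) I would first disjointify the sequence: set $\tilde z_1:=z_1$ and $\tilde z_n:=z_n-z_{n-1}$ for $n\ge 2$, so that $\{\tilde z_n\}$ is a central decomposition of the unity. By the consequence of Proposition~\ref{Saito} recorded after its statement, there is a unique $x\in LS(\mathcal M)$ with $\tilde z_nx=\tilde z_nx_n$ for every $n$. The consistency hypothesis upgrades this to $z_nx=z_nx_n$: for $j\le n$ one has $z_jx_n=z_jx_j$ (trivially if $j=n$, and for $j<n$ it is exactly the relation $x_nz_j=x_jz_j$), whence $\tilde z_jx_n=\tilde z_jx_j$ since $\tilde z_j\le z_j$; summing over $j\le n$ and using $z_n=\sum_{j\le n}\tilde z_j$ gives $z_nx=\sum_{j\le n}\tilde z_jx_j=z_nx_n$. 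Fact (a), which is immediate from the injectivity of the $*$-isomorphism $\phi$ of Proposition~\ref{Saito} (if $az_n=0$ then $\tilde z_na=\tilde z_nz_na=0$ for all $n$, so $\phi(a)=0$), yields uniqueness of $x$.

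The convergence $x_n\stackrel{t(\mathcal M)}{\longrightarrow}x$ then follows from fact (b) applied to $w_n:=x_n-x$, which satisfies $z_n(x_n-x)=z_nx_n-z_nx=0$. To prove (b), fix a basic neighborhood $V(B,\varepsilon,\delta)$ and make the choice $z=p=z_n$ in its definition. Then $w_np=w_nz_n=0\in\mathcal M$ with $\|w_np\|_{\mathcal M}=0\le\varepsilon$, and $zp^{\bot}=z_nz_n^{\bot}=0$, so $\mathcal D(zp^{\bot})=0\le\varepsilon\varphi(z)$; the only surviving requirement is $\varphi(z_n^{\bot})\in W(B,\varepsilon,\delta)$. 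Writing $\varphi(z_n^{\bot})=\chi_{C_n}$ with $\chi_{C_n}\downarrow 0$ $\mu$-a.e. (as $z_n^\bot\downarrow 0$), the set $E:=B\setminus C_n$ obeys $E\subseteq B$, $\chi_{C_n}\chi_E=0$, and $\mu(B\setminus E)=\mu(B\cap C_n)\to 0$ because $\mu(B)<\infty$; hence $\varphi(z_n^{\bot})\in W(B,\varepsilon,\delta)$ for all large $n$, so $w_n\in V(B,\varepsilon,\delta)$ eventually.

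For part (ii) I would apply part (i) twice to produce $x,y\in LS(\mathcal M)$ with $xz_n=x_nz_n$, $yz'_m=y_mz'_m$ and $x_n\to x$, $y_n\to y$. Multiplying $xz_n=x_nz_n$ by $z'_m$ and $yz'_m=y_mz'_m$ by $z_n$, the hypothesis $x_nz_nz'_m=y_mz_nz'_m$ becomes $(x-y)z_nz'_m=0$ for all $n,m$. Fixing $m$ and varying $n$, fact (a) gives $(x-y)z'_m=0$; varying $m$ and applying fact (a) again gives $x=y$. Finally, since $x_nz_n=xz_n$ and $y_nz'_n=yz'_n=xz'_n$, I can write $x_nz_n-y_nz'_n=xz_n-xz'_n=-z_n^{\bot}x+(z'_n)^{\bot}x$, and each summand tends to $0$ by fact (b) (applied to $w_n=-z_n^{\bot}x$, which has $z_nw_n=0$, and to $w_n=(z'_n)^{\bot}x$, which has $z'_nw_n=0$). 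I expect the only genuine subtlety to be fact (b): the effective step is the somewhat unobvious choice $z=p=z_n$, which trivializes the two conditions involving $\mathcal M$ and $\mathcal D$ and reduces everything to the elementary convergence $\varphi(z_n^{\bot})\to 0$ in $t(L^\infty(\Omega))$; the remaining work is bookkeeping with the consistency relation and repeated use of the injectivity of $\phi$.
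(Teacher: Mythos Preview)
Your proof is correct, and the key analytical input is the same as the paper's: both establish that $az_n^{\bot}\in V(B,\varepsilon,\delta)$ for all $a\in LS(\mathcal M)$ once $n$ is large (your ``fact (b)'', proved via the choice $z=p=z_n$). The organization differs, however. For the existence of $x$ in (i), the paper argues that $\{x_nz_n\}$ is $t(\mathcal M)$-Cauchy (using $x_mz_m-x_nz_n=x_mz_mz_n^{\bot}$ for $m>n$) and invokes completeness of $(LS(\mathcal M),t(\mathcal M))$, whereas you construct $x$ directly from Proposition~\ref{Saito} via the disjointified family $\{\tilde z_n\}$ and then verify $z_nx=z_nx_n$. (A small caveat: some $\tilde z_n$ may vanish, so strictly speaking one applies Proposition~\ref{Saito} to the nonzero ones; this is harmless.) For (ii), the paper avoids identifying the limits and instead writes
\[
x_nz_n-y_nz'_n=(x_nz_n-x_mz_m)-y_nz'_nz_m^{\bot}+x_mz_m(z'_n)^{\bot},
\]
each term tending to $0$; your route---apply (i) twice to produce limits $x,y$, use the hypothesis and fact~(a) to force $x=y$, and then observe $x_nz_n-y_nz'_n=-z_n^{\bot}x+(z'_n)^{\bot}x\to 0$---is cleaner and makes the role of Proposition~\ref{Saito} more transparent. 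The paper's route, on the other hand, bypasses the need to invoke the product decomposition and gives part (ii) independently of the limit element.
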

\begin{proof}
(i). Consider a neighborhood $V(B,\varepsilon,\delta)$ of zero in topology $t(\mathcal{M})$, where $\varepsilon,\delta>0,\ B\in\Sigma,\ \mu(B)<\infty$ (see the definition of topology $t(\mathcal{M})$ in section 2). Since $z_n^\bot=(\mathbf{1}-z_n)\downarrow 0$, it follows that $\varphi(z_n^\bot)\in W(B,\varepsilon,\delta)$ for $n\geq n(B,\varepsilon,\delta)$. Taking $x\in LS(\mathcal{M}),\  q_n=z_n$, we have $(xz_n^\bot) q_n=0,\ \mathcal{D}(z_n^\bot q_n)=0$, i.e. $xz_n^\bot\in V(B,\varepsilon,\delta)$ for all $x\in LS(\mathcal{M}),\ n\geq n(B,\varepsilon,\delta)$. For $m>n$, we have  $$x_mz_m-x_nz_n=x_mz_m-x_mz_n=x_m(z_m-z_n)=x_mz_mz_n^\bot\in V(B,\varepsilon,\delta)$$ for all $n\geq n(B,\varepsilon,\delta)$. It means that $\{x_nz_n\}_{n=1}^\infty$ is a Cauchy sequence in $(LS(\mathcal{M}),t(\mathcal{M}))$. Consequently, there exists $x\in LS(\mathcal{M})$ such that $x_nz_n\stackrel{t(\mathcal{M})}{\longrightarrow} x$.

Since $x_nz_n^\bot\in V(B,\varepsilon,\delta)$ for all $n\geq n(B,\varepsilon,\delta)$, it follows that $x_nz_n^\bot\stackrel{t(\mathcal{M})}{\longrightarrow} 0$, and therefore $x_n=x_nz_n+x_nz_n^\bot\stackrel{t(\mathcal{M})}{\longrightarrow} x$. Fixing $k\in\mathbb{N}$, for $n>k$ we have $x_kz_k=x_nz_k\stackrel{t(\mathcal{M})}{\longrightarrow} xz_k$ for $n\rightarrow\infty$, i.e. $xz_k=x_kz_k$ for all $k\in\mathbb{N}$.

If $a\in LS(\mathcal{M})$ and $az_n=x_nz_n=xz_n$ for all
$n\in\mathbb{N}$, then $0=(a-x)z_n\stackrel{t(\mathcal{M})}{\longrightarrow} (a-x)$, i.e.
$a=x$.

(ii). If $x_m z_m {z'}_{n}^{\bot} \stackrel{t(\mathcal{M})}{\longrightarrow} 0$ for $n\rightarrow\infty$, $y_nz'_nz_m^\bot\stackrel{t(\mathcal{M})}{\longrightarrow} 0$ for $m\rightarrow\infty$, and $x_nz_n-x_mz_m\stackrel{t(\mathcal{M})}{\longrightarrow} 0$ for $n,m\rightarrow\infty$, then
\begin{gather*}
\begin{split}
x_nz_n-y_nz'_n&=x_nz_n-x_mz_m+x_mz_mz'_n+x_mz_m{z'}_n^\bot-y_nz'_n=
\\
&=(x_nz_n-x_mz_m)+y_nz_mz'_n+x_mz_m{z'}_n^\bot-y_nz'_n=
\\
&=
(x_nz_n-x_mz_m)-y_nz'_nz_m^\bot+x_mz_m{z'}_n^\bot\stackrel{t(\mathcal{M})}{\longrightarrow} 0
\end{split}
\end{gather*}
for $n,m\rightarrow\infty$.
\end{proof}

Now, we consider a derivation $\delta$ from $S(\mathcal{M})$ into
$LS(\mathcal{M})$ and construct an extension $\widetilde{\delta}$ from $LS(\mathcal{M})$ into
$LS(\mathcal{M})$. Recall that for an arbitrary operator $x\in LS(\mathcal{M})$
there exists a sequence $\{z_n\}_{n=1}^\infty\subset
\mathcal{P}(\mathcal{Z}(\mathcal{M}))$ such that
$z_n\uparrow\mathbf{1}$ and $xz_n\in S(\mathcal{M})$ for all
$n\in\mathbb{N}$.

Since $\delta(xz_n)z_m=\delta(xz_nz_m)$ (see Lemma \ref{l1}), the
sequence $\{\delta(xz_n)\}_{n=1}^\infty$ is consistent with the
sequence $\{z_n\}_{n=1}^\infty$. By Proposition \ref{p8}(i), there
exists a unique $y(x)\in LS(\mathcal{M})$ such that
$\delta(xz_n)\stackrel{t(\mathcal{M})}{\longrightarrow} y(x)$
(notation:
$y(x)=t(\mathcal{M})-\lim_{n\rightarrow\infty}\delta(xz_n)$). Set
$\widetilde{\delta}(x)=y(x)$. According to Proposition
\ref{p8}(ii), the definition of operator $\widetilde{\delta}(x)$
does not depend on a choice of a sequence
$\{z_n\}_{n=1}^\infty\subset\mathcal{P}(\mathcal{Z}(\mathcal{M}))$, for which
$z_n\uparrow\mathbf{1}$ and $xz_n\in S(\mathcal{M})$,
$n\in\mathbb{N}$. If $x\in S(\mathcal{M})$, then, taking
$z_n=\mathbf{1},\ n\in\mathbb{N}$, we obtain
$\widetilde{\delta}(x)=\delta(x)$.

\begin{proposition}
\label{p9}
The mapping $\widetilde{\delta}$ is a unique derivation from $LS(\mathcal{M})$ into $LS(\mathcal{M})$ such that $\widetilde{\delta}(x)=\delta(x)$ for all $x\in S(\mathcal{M})$.
\end{proposition}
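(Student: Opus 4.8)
The plan is to take for granted, from the construction preceding the statement, that $\widetilde{\delta}$ is a well-defined map $LS(\mathcal{M})\to LS(\mathcal{M})$ whose value $\widetilde{\delta}(x)$ equals the $t(\mathcal{M})$-limit of $\delta(xz_n)$ for any central sequence $z_n\uparrow\mathbf 1$ with $xz_n\in S(\mathcal{M})$, this limit being independent of the chosen sequence by Proposition \ref{p8}(ii), and that $\widetilde{\delta}(x)=\delta(x)$ for $x\in S(\mathcal{M})$. It then remains to verify three things: that $\widetilde{\delta}$ is linear, that it satisfies the Leibniz rule, and that it is the \emph{only} derivation on $LS(\mathcal{M})$ restricting to $\delta$ on $S(\mathcal{M})$. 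Throughout I would use two basic facts: first, that $az_n\stackrel{t(\mathcal{M})}{\longrightarrow}a$ for every $a\in LS(\mathcal{M})$ whenever $z_n\uparrow\mathbf 1$ are central (this is implicit in the proof of Proposition \ref{p8}(i), since $az_n^\bot\to 0$); and second, that $(LS(\mathcal{M}),t(\mathcal{M}))$ is a topological $*$-algebra, so that addition, scalar multiplication, and multiplication are $t(\mathcal{M})$-continuous.

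For linearity and the Leibniz rule the key device is a common central sequence. Given $x,y\in LS(\mathcal{M})$ with associated sequences $\{z_n\}$ and $\{z'_n\}$, I would set $w_n:=z_nz'_n$; then $w_n\uparrow\mathbf 1$ are central and $xw_n,\,yw_n\in S(\mathcal{M})$, so by the independence of the defining sequence both $\widetilde{\delta}(x)$ and $\widetilde{\delta}(y)$ are the $t(\mathcal{M})$-limits of $\delta(xw_n)$ and $\delta(yw_n)$. Linearity of $\widetilde{\delta}$ then follows at once: since $(\alpha x+\beta y)w_n=\alpha\,xw_n+\beta\,yw_n$ and $\delta$ is linear on $S(\mathcal{M})$, continuity of the linear operations lets me pass $\alpha\,\delta(xw_n)+\beta\,\delta(yw_n)$ to the limit.

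For the Leibniz rule I would note that, $w_n$ being a central idempotent, $(xy)w_n=(xw_n)(yw_n)$, a product of two elements of $S(\mathcal{M})$ and hence again in $S(\mathcal{M})$; so $w_n$ is a legitimate defining sequence for $xy$ as well. Applying the Leibniz rule for $\delta$ on $S(\mathcal{M})$ gives $\delta((xy)w_n)=\delta(xw_n)\,yw_n+xw_n\,\delta(yw_n)$. I would then pass to the $t(\mathcal{M})$-limit using $\delta(xw_n)\to\widetilde{\delta}(x)$, $\delta(yw_n)\to\widetilde{\delta}(y)$, $xw_n\to x$, $yw_n\to y$, together with joint continuity of multiplication, obtaining $\widetilde{\delta}(xy)=\widetilde{\delta}(x)\,y+x\,\widetilde{\delta}(y)$. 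This limiting step is the main obstacle: it relies essentially on multiplication being \emph{jointly} (not merely separately) $t(\mathcal{M})$-continuous, and on having arranged a single sequence $w_n$ that simultaneously computes $\widetilde{\delta}(x)$, $\widetilde{\delta}(y)$ and $\widetilde{\delta}(xy)$.

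Finally, for uniqueness, let $D:LS(\mathcal{M})\to LS(\mathcal{M})$ be any derivation with $D|_{S(\mathcal{M})}=\delta$. Since $LS(\mathcal{M})$ contains $\mathcal{P}(\mathcal{Z}(\mathcal{M}))$, Lemma \ref{l1} applies to both $D$ and $\widetilde{\delta}$, giving $z_nD(x)=D(xz_n)=\delta(xz_n)=\widetilde{\delta}(xz_n)=z_n\widetilde{\delta}(x)$ for every $n$, where I used $xz_n\in S(\mathcal{M})$ and that both maps extend $\delta$. Letting $n\to\infty$ and invoking $z_na\to a$ for $a=D(x)$ and $a=\widetilde{\delta}(x)$, I conclude $D(x)=\widetilde{\delta}(x)$ for all $x\in LS(\mathcal{M})$, which establishes uniqueness.
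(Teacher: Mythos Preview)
Your proposal is correct and follows essentially the same approach as the paper: a common central sequence (your $w_n=z_nz'_n$, the paper's $z_np_n$) is used to pass the linearity and Leibniz identities to the $t(\mathcal{M})$-limit via the joint continuity of the algebra operations, and uniqueness is obtained from Lemma~\ref{l1} by $z_nD(x)=D(xz_n)=\delta(xz_n)\to\widetilde{\delta}(x)$. The only cosmetic difference is that the paper keeps two separate sequences $z_n,p_n$ in the Leibniz step while you merge them into $w_n$ from the outset; the content is the same.
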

\begin{proof}
Let $x,y\in LS(\mathcal{M})$, and let $z_n,p_n\in\mathcal{P}(\mathcal{Z}(\mathcal{M}))$ be such that$z_n\uparrow\mathbf{1},\ p_n\uparrow\mathbf{1},\ xz_n,yp_n\in S(\mathcal{M}),\ n\in\mathbb{N}$. Observing that $$z_np_n\in\mathcal{P}(\mathcal{Z}(\mathcal{M})),\ (z_np_n)\uparrow\mathbf{1},\ xz_np_n,yz_np_n,(x+y)z_np_n\in S(\mathcal{M}),\ n\in\mathbb{N},$$ we have
\begin{gather*}
\begin{split}
\widetilde{\delta}(x+y)&=t(\mathcal{M})-\lim_{n\rightarrow\infty}\delta((x+y)z_np_n)=
 \\
&=\bigl(t(\mathcal{M})-\lim_{n\rightarrow\infty}\delta(xz_np_n)\bigl) +\bigl(t(\mathcal{M})-\lim_{n\rightarrow\infty}\delta(yz_np_n)\bigl)=
\\
&=\widetilde{\delta}(x)+\widetilde{\delta}(y).
\end{split}
\end{gather*}

Similarly, $\widetilde{\delta}(\lambda x)=\lambda\widetilde{\delta}(x)$, $\lambda\in\mathbb{C}$. Further, using convergences $$xz_n\stackrel{t(\mathcal{M})}{\longrightarrow} x,\ yp_n\stackrel{t(\mathcal{M})}{\longrightarrow} y,\ \delta(xz_n)\stackrel{t(\mathcal{M})}{\longrightarrow} \widetilde{\delta}(x),\ \delta(yp_n)\stackrel{t(\mathcal{M})}{\longrightarrow} \widetilde{\delta}(y)$$ and the inclusion $xyz_np_n\in S(\mathcal{M}),\ n\in\mathbb{N}$, we have
\begin{gather*}
\begin{split}
\widetilde{\delta}(xy)&= t(\mathcal{M})-\lim_{n\rightarrow\infty}\delta(xyz_np_n)=t(\mathcal{M})-\lim_{n\rightarrow\infty}\delta((xz_n)(yp_n))=\\
&=t(\mathcal{M})-\lim_{n\rightarrow\infty}(\delta(xz_n)yp_n+xz_n\delta(y_np_n))=\widetilde{\delta}(x)y+x\widetilde{\delta}(y).
\end{split}
\end{gather*}

Consequently, $\widetilde{\delta}: LS(\mathcal{M})\rightarrow LS(\mathcal{M})$ is a derivation, in addition, $\widetilde{\delta}(x)=\delta(x)$ for all $x\in S(\mathcal{M})$.

Assume that $\delta_1: LS(\mathcal{M})\rightarrow LS(\mathcal{M})$ is also a derivation for which $\delta_1(x)=\delta(x)$ for all $x\in S(\mathcal{M})$. Let us show that $\widetilde{\delta}=\delta_1$.

If $x\in LS(\mathcal{M}),\ z_n\uparrow\mathbf{1},\ xz_n\in S(\mathcal{M}),\ n\in\mathbb{N}$, then, by Lemma \ref{l1} and Proposition \ref{p8} (i), we obtain
\begin{gather*}
\begin{split}
\widetilde{\delta}(x)&=t(\mathcal{M})-\lim_{n\rightarrow\infty}\delta(xz_n)=t(\mathcal{M})-\lim_{n\rightarrow\infty}\delta_1(xz_n)=\\
&= t(\mathcal{M})-\lim_{n\rightarrow\infty}\delta_1(x)z_n=\delta_1(x).
\end{split}
\end{gather*}
\end{proof}

Now, we give the construction of extension of a derivation $\delta: \mathcal{M}\rightarrow LS(\mathcal{M})$ up to a derivation $\widehat{\delta}: S(\mathcal{M})\rightarrow LS(\mathcal{M})$. For each $x\in LS(\mathcal{M})$ set $s(x):=l(x)\vee r(x)$, where $l(x)$ is the left and $r(x)$ is the right support of $x$. If $x=u|x|$ is a polar decomposition of $x\in LS(\mathcal{N})$, then $u\in\mathcal{M}$ \cite[\S2.3]{MCh} and, due to equalities $l(x)=uu^*,\ r(x)=u^*u$, we have $l(x)\sim r(x)$. We need the following lemma.

\begin{lemma}
\label{l3}
If $\mathcal{D}$ is a dimension function of a von Neumann algebra $\mathcal{M}$, then for any derivation $\delta$ from $\mathcal{M}$ into $LS(\mathcal{M})$ the following inequality $$\mathcal{D}(s(\delta(x)))\leq 3 \mathcal{D}(s(x))$$ holds for all $x\in\mathcal{M}$.
\end{lemma}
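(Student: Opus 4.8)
The plan is to reduce everything to the single projection $s:=s(x)=l(x)\vee r(x)$ and to exploit the fact that a derivation kills the ``diagonal'' of a projection. First I would record that, since $s\geq l(x)$ and $s\geq r(x)$, one has $sx=xs=x$, hence $x=sxs$. Applying the Leibniz rule to this triple product gives
$$\delta(x)=\delta(s)\,x+s\,\delta(x)\,s+x\,\delta(s)=:a_1+a_2+a_3 .$$
The whole proof then consists in estimating the left and right supports of the three summands $a_1,a_2,a_3$ and assembling them with the subadditivity (D6) of $\mathcal{D}$.

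The key structural input is that $\delta(s)$ is off-diagonal with respect to $s$. From $\delta(s)=\delta(s^2)=\delta(s)s+s\delta(s)$ one gets, after multiplying by $s$ (respectively $s^\bot$) on both sides, the identities $s\delta(s)s=0$ and $s^\bot\delta(s)s^\bot=0$, so that $\delta(s)=s\delta(s)s^\bot+s^\bot\delta(s)s$. Combined with $x=sx=xs$ this yields $a_1=\delta(s)x=s^\bot\delta(s)x$ and $a_3=x\delta(s)=x\delta(s)s^\bot$. Tracking supports I then obtain $r(a_1)\leq r(x)\leq s$ and $l(a_1)\leq s^\bot$; $l(a_2)\leq s$ and $r(a_2)\leq s$; $l(a_3)\leq l(x)\leq s$ and $r(a_3)\leq s^\bot$.

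Next I would use $s(\delta(x))=l(\delta(x))\vee r(\delta(x))\leq\bigvee_{i=1}^3\big(l(a_i)\vee r(a_i)\big)$, since the left (respectively right) support of a sum is dominated by the join of the left (respectively right) supports. Grouping the six supports according to the previous step, four of them ($r(a_1),l(a_2),r(a_2),l(a_3)$) lie under $s$, while only $l(a_1)$ and $r(a_3)$ lie under $s^\bot$. Hence
$$s(\delta(x))\leq s\vee l(a_1)\vee r(a_3).$$
Applying (D6) and then controlling the two ``off-support'' pieces by the equivalence $l(a)\sim r(a)$ valid in $LS(\mathcal{M})$ (so that, by (D3), $\mathcal{D}(l(a_1))=\mathcal{D}(r(a_1))\leq\mathcal{D}(s)$ and $\mathcal{D}(r(a_3))=\mathcal{D}(l(a_3))\leq\mathcal{D}(s)$) gives
$$\mathcal{D}(s(\delta(x)))\leq\mathcal{D}(s)+\mathcal{D}(l(a_1))+\mathcal{D}(r(a_3))\leq 3\,\mathcal{D}(s)=3\,\mathcal{D}(s(x)).$$

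The step I expect to require the most care is not any single estimate but the bookkeeping that separates the harmless contributions lying under $s$ from the two off-diagonal pieces $l(a_1),r(a_3)$ living under $s^\bot$: the whole point is that each of $a_1$ and $a_3$ escapes $s$ only through a single factor, while its opposite support stays under $s$, so the equivalence $l\sim r$ in $LS(\mathcal{M})$ together with (D3) brings it back under control by $\mathcal{D}(s(x))$. Obtaining precisely the constant $3$ (rather than a cruder $4$ or $6$) hinges on estimating $s\vee l(a_1)\vee r(a_3)$ in one application of (D6), instead of bounding $l(\delta(x))$ and $r(\delta(x))$ separately.
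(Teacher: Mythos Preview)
Your argument is correct and follows the same overall strategy as the paper: decompose $\delta(x)$ via the Leibniz rule applied to a factorization of $x$ through $s=s(x)$, observe that all but two of the resulting supports already lie under $s$, and control the two remaining ones by the equivalence $l(a)\sim r(a)$ together with (D3) and (D6).

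The paper, however, uses a leaner decomposition. Rather than $x=sxs$ (three terms), it writes only $x=xs$, so that $\delta(x)=\delta(x)s+x\delta(s)$. For the first summand $r(\delta(x)s)\leq s$, and for the second $l(x\delta(s))\leq s$; the opposite supports are handled exactly as you do, via $l\sim r$ and (D3). This yields $s(\delta(x))\leq s\vee l(\delta(x)s)\vee r(x\delta(s))$ and the same constant $3$ after one application of (D6). The difference is that the paper never needs the off-diagonal identity $s\delta(s)s=s^\bot\delta(s)s^\bot=0$; your observation is correct and pleasant, but it is extra work that can be avoided. Conversely, your version makes explicit that the ``escaping'' supports actually live under $s^\bot$, which is a bit more informative, though not required for the bound.
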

\begin{proof}
For $x\in\mathcal{M}$ we have
\begin{gather*}
l(\delta(x)s(x))\sim r(\delta(x)s(x))\leq s(x),\\
r(x\delta(s(x)))\sim l(x\delta(s(x)))=l(s(x)x\delta(s(x)))\leq s(x),
\end{gather*}
i.e.
$$l(\delta(x)s(x))\preceq s(x)$$ and $$r(x\delta(s(x)))\preceq s(x),$$ that implies the inequalities  (see (D2), (D3))
$$\mathcal{D}(l(\delta(x)s(x)))\leq\mathcal{D}(s(x)),\ \mathcal{D}(r(x\delta(s(x))))\leq\mathcal{D}(s(x)).$$

Since $$\delta(x)=\delta(xs(x))=\delta(x)s(x)+x\delta(s(x)),$$ we have $$s(\delta(x))=s(\delta(x)s(x)+x\delta(s(x)))\leq s(x)\vee l(\delta(x)s(x))\vee r(x\delta(s(x))).$$ Due to (D6), we have $$\mathcal{D}(s(\delta(x)))\leq \mathcal{D}(s(x))+\mathcal{D}(l(\delta(x)s(x)))+\mathcal{D}(r(x\delta(s(x))))\leq 3 \mathcal{D}(s(x)).$$
\end{proof}

As in the definition of the topology $t(\mathcal{M})$, denote by $\varphi$ a $*$-isomorphism from $\mathcal{Z}(\mathcal{M})$ onto the $*$-algebra $L^\infty(\Omega,\Sigma,\mu)$, where $\mu$ is a measure satisfying the direct sum property. By Proposition \ref{plm-spk1}(i), the convergence of the sequence of projections $p_n\stackrel{t(\mathcal{M})}{\longrightarrow} 0$ is equivalent to existence of a sequence $\{z_n\}\subset\mathcal{P}(\mathcal{Z}(\mathcal{M}))$ such that $z_np_n\in \mathcal{P}_{fin}(\mathcal{M})$ for all $n$, $\varphi(z_n^\bot)\stackrel{t(L^\infty(\Omega))}{\longrightarrow} 0$ and $\mathcal{D}(z_np_n)\stackrel{t(L^\infty(\Omega))}{\longrightarrow} 0$.

\begin{lemma}
\label{l4}
If $\{x_n\}_{n=1}^\infty\subset LS(\mathcal{M}),\ s(x_n)\in\mathcal{P}_{fin}(\mathcal{M}),\ \mathcal{D}(s(x_n))\stackrel{t(L^\infty(\Omega))}{\longrightarrow} 0$, then $x_n\stackrel{t(\mathcal{M})}{\longrightarrow} 0$.
\end{lemma}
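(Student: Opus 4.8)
The plan is to reduce the claim to the case of projections via the spectral convergence criterion of Proposition~\ref{plm-spk1}(ii), and then to treat the supports $s(x_n)$ directly using Proposition~\ref{plm-spk1}(i). By Proposition~\ref{plm-spk1}(ii) it suffices to prove that $E^\bot_\lambda(|x_n|)\stackrel{t(\mathcal{M})}{\longrightarrow}0$ for every fixed $\lambda>0$. The decisive observation is that, for $\lambda>0$, the projection $E^\bot_\lambda(|x_n|)=\mathbf{1}-E_\lambda(|x_n|)$ is the spectral projection of $|x_n|$ associated with the interval $(\lambda,\infty)$, so its range lies in the support of $|x_n|$, which equals $r(x_n)$; consequently
\begin{equation*}
E^\bot_\lambda(|x_n|)\leq r(x_n)\leq s(x_n)\qquad(\lambda>0).
\end{equation*}
In view of Remark~\ref{rem_plm-spk1}, it therefore remains only to establish that $s(x_n)\stackrel{t(\mathcal{M})}{\longrightarrow}0$.

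To prove the latter, I would apply Proposition~\ref{plm-spk1}(i) to the sequence of projections $p_n:=s(x_n)$ with the \emph{trivial} choice of central net $z_n:=\mathbf{1}$. Since $s(x_n)\in\mathcal{P}_{fin}(\mathcal{M})$ by hypothesis, we have $z_ns(x_n)=s(x_n)\in\mathcal{P}_{fin}(\mathcal{M})$ for every $n$; moreover $\varphi(z_n^\bot)=0\stackrel{t(L^\infty(\Omega))}{\longrightarrow}0$ trivially, while $\mathcal{D}(z_ns(x_n))=\mathcal{D}(s(x_n))\stackrel{t(L^\infty(\Omega))}{\longrightarrow}0$ is exactly the standing assumption. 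Thus all three conditions of Proposition~\ref{plm-spk1}(i) are satisfied, and we conclude $s(x_n)\stackrel{t(\mathcal{M})}{\longrightarrow}0$.

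Combining the two steps, from $s(x_n)\stackrel{t(\mathcal{M})}{\longrightarrow}0$ together with the domination $E^\bot_\lambda(|x_n|)\leq s(x_n)$ we obtain, by Remark~\ref{rem_plm-spk1}, that $E^\bot_\lambda(|x_n|)\stackrel{t(\mathcal{M})}{\longrightarrow}0$ for each $\lambda>0$, and hence $x_n\stackrel{t(\mathcal{M})}{\longrightarrow}0$ by Proposition~\ref{plm-spk1}(ii). The argument is essentially mechanical once these reductions are in place. The only genuinely load-bearing point is the domination $E^\bot_\lambda(|x_n|)\leq s(x_n)$, which is what transfers the hypothesis on the supports $s(x_n)$ to all the tail spectral projections of $|x_n|$; the finiteness assumption $s(x_n)\in\mathcal{P}_{fin}(\mathcal{M})$ is precisely what allows the central net in Proposition~\ref{plm-spk1}(i) to be taken identically equal to $\mathbf{1}$, so that no additional central cut-off is required and the hypothesis $\mathcal{D}(s(x_n))\stackrel{t(L^\infty(\Omega))}{\longrightarrow}0$ can be used verbatim.
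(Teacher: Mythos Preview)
Your proof is correct and follows essentially the same approach as the paper: both first show $s(x_n)\stackrel{t(\mathcal{M})}{\longrightarrow}0$ by applying Proposition~\ref{plm-spk1}(i) with the trivial central net $z_n=\mathbf{1}$, and then use the domination $E^\bot_\lambda(|x_n|)\leq s(x_n)$ together with Remark~\ref{rem_plm-spk1} and Proposition~\ref{plm-spk1}(ii) to conclude. The only difference is expository order; the content is identical.
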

\begin{proof}
Taking $z_n=\mathbf{1}$ for all $n\in\mathbb{N}$, we have $$z_ns(x_n)\in\mathcal{P}_{fin}(\mathcal{M}),\ \varphi(z_n^\bot)=0,\ n\in\mathbb{N},$$ and $$\mathcal{D}(z_ns(x_n))=\mathcal{D}(s(x_n))\stackrel{t(L^\infty(\Omega))}{\longrightarrow} 0.$$ Consequently, $s(x_n)\stackrel{t(\mathcal{M})}{\longrightarrow} 0$ (see Proposition \ref{plm-spk1}(i)).

Since $E^\bot_\lambda(|x_n|)\leq s(x_n)$ for all $\lambda>0,\ n\in\mathbb{N}$, it follows $E_\lambda^\bot(|x_\alpha|) \stackrel{t(\mathcal{M})}{\longrightarrow} 0$, and therefore $x_n\stackrel{t(\mathcal{M})}{\longrightarrow} 0$ (see  Remark \ref{rem_plm-spk1}).
\end{proof}

If $p_n\in \mathcal{P}_{fin}(\mathcal{M})$ and $p_n\downarrow 0$, then $\mathcal{D}(p_n)\in L^0_+(\Omega,\Sigma,\mu)$ (see (D1)) and $\mathcal{D}(p_n)\downarrow 0$ (see (D2) and D(7)), in particular, $\mathcal{D}(p_n) \stackrel{t(L^\infty(\Omega))}{\longrightarrow} 0$. Hence, Lemma \ref{l4} implies the following
\begin{corollary}
\label{cc1}
If $\{p_n\}_{n=1}^\infty\subset \mathcal{P}_{fin}(\mathcal{M}),\ p_n\downarrow 0$, then $p_n\stackrel{t(\mathcal{M})}{\longrightarrow} 0$.
\end{corollary}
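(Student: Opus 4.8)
The plan is to obtain the corollary as a direct specialization of Lemma~\ref{l4} to the sequence $x_n=p_n$. The first thing I would record is that for any projection $p$ one has $s(p)=p$: since $p$ is self-adjoint and idempotent, its polar decomposition is $p=p\cdot p$ with $|p|=p$ and accompanying partial isometry $p$, whence $l(p)=r(p)=p$ and therefore $s(p)=l(p)\vee r(p)=p$. Taking $p=p_n$, the hypothesis $s(p_n)\in\mathcal{P}_{fin}(\mathcal{M})$ of Lemma~\ref{l4} holds automatically, because $s(p_n)=p_n$ and $p_n$ is finite by assumption.

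It then remains to check the second hypothesis of Lemma~\ref{l4}, namely $\mathcal{D}(s(p_n))=\mathcal{D}(p_n)\stackrel{t(L^\infty(\Omega))}{\longrightarrow}0$. For this I would invoke the properties of the dimension function already listed. Property (D1) guarantees $\mathcal{D}(p_n)\in L^0_+(\Omega,\Sigma,\mu)$, so each $\mathcal{D}(p_n)$ is finite almost everywhere; property (D7), applied to the decreasing sequence of finite projections $p_n\downarrow 0$, gives $\mathcal{D}(p_n)\to 0$ almost everywhere; and monotonicity of $\mathcal{D}$ (a consequence of (D2), since $p\leq q$ yields $\mathcal{D}(q)=\mathcal{D}(p)+\mathcal{D}(q-p)\geq\mathcal{D}(p)$) makes this convergence monotone, $\mathcal{D}(p_n)\downarrow 0$.

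The single analytic point is to pass from almost-everywhere convergence to convergence in $t(L^\infty(\Omega))$. I would argue this set-by-set: since $t(L^\infty(\Omega))$ is the topology of convergence locally in measure, it suffices to verify that $\mathcal{D}(p_n)\chi_B\to 0$ in measure $\mu$ for every $B\in\Sigma$ with $\mu(B)<\infty$; but on a set of finite measure, an almost-everywhere convergent sequence of finite-valued measurable functions always converges in measure, which is exactly what we need. With both hypotheses of Lemma~\ref{l4} now verified, the lemma yields $p_n\stackrel{t(\mathcal{M})}{\longrightarrow}0$. I do not expect a serious obstacle here: the result is a clean corollary of Lemma~\ref{l4}, and the only nontrivial ingredient is the elementary measure-theoretic fact just invoked.
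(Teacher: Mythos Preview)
Your proposal is correct and follows essentially the same approach as the paper: both verify $\mathcal{D}(p_n)\in L^0_+(\Omega,\Sigma,\mu)$ via (D1), obtain $\mathcal{D}(p_n)\downarrow 0$ from (D2) and (D7), conclude $\mathcal{D}(p_n)\stackrel{t(L^\infty(\Omega))}{\longrightarrow}0$, and then invoke Lemma~\ref{l4}. Your write-up is simply more explicit about the identifications $s(p_n)=p_n$ and about why monotone a.e.\ convergence yields convergence locally in measure.
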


\begin{lemma}
\label{l6}
Let $x\in S(\mathcal{M}),\ p_n,q_n\in\mathcal{P}(\mathcal{M}),\ p_n\uparrow\mathbf{1},\ q_n\uparrow\mathbf{1},\ xp_n,xq_n\in\mathcal{M},\ p_n^\bot,q_n^\bot\in\mathcal{P}_{fin}(\mathcal{M}),\ n\in\mathbb{N}$. If $\delta: \mathcal{M}\rightarrow LS(\mathcal{M})$ is a derivation, then there exists $\widehat{\delta}(x)\in LS(\mathcal{M})$, such that $$t(\mathcal{M})-\lim_{n\rightarrow\infty}\delta(xp_n)=\widehat{\delta}(x)=t(\mathcal{M})-\lim_{n\rightarrow\infty}\delta(xq_n).$$
\end{lemma}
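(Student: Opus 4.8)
The plan is to reduce both the existence of the limit and its independence of the approximating sequence to a single support–dimension estimate combined with Lemma \ref{l4}. The estimate I would isolate first is the following: if $e\in\mathcal{P}(\mathcal{M})$ is a finite projection with $xe\in\mathcal{M}$, then $s(\delta(xe))\in\mathcal{P}_{fin}(\mathcal{M})$ and $\mathcal{D}(s(\delta(xe)))\leq 6\,\mathcal{D}(e)$. Indeed, $(xe)e=xe$ gives $r(xe)\leq e$, while the polar decomposition gives $l(xe)\sim r(xe)$; hence by (D3) and (D6) one gets $\mathcal{D}(s(xe))=\mathcal{D}(l(xe)\vee r(xe))\leq 2\mathcal{D}(r(xe))\leq 2\mathcal{D}(e)$. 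Applying Lemma \ref{l3} to the element $xe\in\mathcal{M}$ yields $\mathcal{D}(s(\delta(xe)))\leq 3\mathcal{D}(s(xe))\leq 6\mathcal{D}(e)$, and finiteness of $s(\delta(xe))$ then follows from (D1) since $\mathcal{D}(e)\in L^0_+(\Omega,\Sigma,\mu)$.

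First I would show that $\{\delta(xp_n)\}$ is a Cauchy sequence in $(LS(\mathcal{M}),t(\mathcal{M}))$ and invoke completeness of this space to produce the limit $\widehat{\delta}(x)$. For $m>n$ the projection $e=p_m-p_n$ satisfies $e\leq p_n^\bot\in\mathcal{P}_{fin}(\mathcal{M})$ and $xe=(xp_m)e\in\mathcal{M}$, so $\delta(xp_m)-\delta(xp_n)=\delta(xe)$ with $\mathcal{D}(s(\delta(xe)))\leq 6\mathcal{D}(e)\leq 6\mathcal{D}(p_n^\bot)$. Since $p_n^\bot\downarrow 0$ consists of finite projections, (D7) gives $\mathcal{D}(p_n^\bot)\to 0$ almost everywhere, whence $\mathcal{D}(p_n^\bot)\stackrel{t(L^\infty(\Omega))}{\longrightarrow}0$. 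If $\{\delta(xp_n)\}$ failed to be Cauchy, there would be a neighborhood $U$ of $0$ and indices $n_k<m_k$ with $n_k\to\infty$ and $\delta(xp_{m_k})-\delta(xp_{n_k})\notin U$; but the operators $w_k:=\delta(x(p_{m_k}-p_{n_k}))$ would have finite supports with $\mathcal{D}(s(w_k))\leq 6\mathcal{D}(p_{n_k}^\bot)\stackrel{t(L^\infty(\Omega))}{\longrightarrow}0$, so Lemma \ref{l4} would force $w_k\stackrel{t(\mathcal{M})}{\longrightarrow}0$, a contradiction. Hence the sequence converges; I denote its limit by $\widehat{\delta}(x)$.

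Finally I would prove that the sequence $\{q_n\}$ produces the same limit. Put $r_n:=p_n\wedge q_n\leq p_n$, so that $xr_n=(xp_n)r_n\in\mathcal{M}$. The Kaplansky parallelogram law $p_n-p_n\wedge q_n\sim(p_n\vee q_n)-q_n$ (see \cite[ch.\ 5, Proposition 1.6]{Tak}) together with $(p_n\vee q_n)-q_n\leq q_n^\bot$ gives $\mathcal{D}(p_n-r_n)\leq\mathcal{D}(q_n^\bot)\stackrel{t(L^\infty(\Omega))}{\longrightarrow}0$, and symmetrically $\mathcal{D}(q_n-r_n)\leq\mathcal{D}(p_n^\bot)\stackrel{t(L^\infty(\Omega))}{\longrightarrow}0$. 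Applying the estimate above to the projections $p_n-r_n$ and $q_n-r_n$, and then Lemma \ref{l4} directly to the single sequences $\{\delta(x(p_n-r_n))\}$ and $\{\delta(x(q_n-r_n))\}$, I obtain $\delta(xp_n)-\delta(xr_n)=\delta(x(p_n-r_n))\stackrel{t(\mathcal{M})}{\longrightarrow}0$ and likewise $\delta(xq_n)-\delta(xr_n)\stackrel{t(\mathcal{M})}{\longrightarrow}0$; subtracting gives $\delta(xp_n)-\delta(xq_n)\stackrel{t(\mathcal{M})}{\longrightarrow}0$, so $\lim_n\delta(xq_n)=\widehat{\delta}(x)$ as well. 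I expect the main obstacle to be conceptual rather than computational: one must resist trying to arrange $r_n=p_n\wedge q_n\uparrow\mathbf{1}$, which can fail because the lattice operations are not continuous along decreasing nets, and instead observe that only the dimension estimates $\mathcal{D}(p_n-r_n),\ \mathcal{D}(q_n-r_n)\to 0$ are needed, together with the non-uniform cutoff mechanism already packaged inside Lemma \ref{l4} via Proposition \ref{plm-spk1}(i).
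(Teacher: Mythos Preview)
Your proof is correct and follows essentially the same route as the paper: bound $\mathcal{D}(s(\delta(xe)))$ via Lemma~\ref{l3}, then invoke Lemma~\ref{l4}. The only organizational difference is in the independence step: the paper bounds $\delta(xp_n)-\delta(xq_n)$ directly by observing $r(p_n-q_n)\leq (p_n-p_n\wedge q_n)\vee(q_n-p_n\wedge q_n)$ and hence $\mathcal{D}(s(x(p_n-q_n)))\leq 4(\mathcal{D}(p_n^\bot)+\mathcal{D}(q_n^\bot))$, whereas you triangulate through $r_n=p_n\wedge q_n$; both give the same conclusion with the same tools.

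One minor inaccuracy in your closing remark: in the present setting $r_n=p_n\wedge q_n\uparrow\mathbf{1}$ \emph{does} hold. Indeed $r_n^\bot=p_n^\bot\vee q_n^\bot$ is a decreasing sequence of finite projections with $\mathcal{D}(r_n^\bot)\leq\mathcal{D}(p_n^\bot)+\mathcal{D}(q_n^\bot)\downarrow 0$, which forces $r_n^\bot\downarrow 0$ (the paper uses exactly this in the proof of Proposition~\ref{p10}). So your caution there is unnecessary, though harmless to the argument.
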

\begin{proof}
For $n<m$ we have $$l(x(p_m-p_n))\sim r(x(p_m-p_n))\leq
p_m-p_n,$$ and therefore, applying Lemma \ref{l3} and properties (D2), (D3),
we obtain
\begin{align*}
\mathcal{D}( & s(\delta(xp_m-xp_n)))=
\mathcal{D}(s(\delta(x(p_m-p_n))))\leq
3\mathcal{D}(s(x(p_m-p_n)))\leq\\ & 3\mathcal{D}(l(x(p_m-p_n))\vee
(p_m-p_n))\leq 6\mathcal{D}(p_m-p_n)\leq 6\mathcal{D}(p_n^\bot).
\end{align*}
Since $\mathcal{D}(p_n^\bot)\in L_+^0(\Omega,\Sigma,\mu)$ (see (D1)) and  $\mathcal{D}(p_n^\bot)\downarrow 0$ (see (D7)) it follows that $\mathcal{D}(p_n^\bot)\stackrel{t(L^\infty(\Omega))}{\longrightarrow} 0$ (see (D7)). Hence, $$\mathcal{D}(s(\delta(xp_m)-\delta(xp_n)))\stackrel{t(L^\infty(\Omega))}{\longrightarrow} 0$$ for  $n,m\rightarrow\infty$. By Lemma \ref{l4}, we have that $(\delta(xp_m)-\delta(xp_n))\stackrel{t(\mathcal{M})}{\longrightarrow} 0$ for $n,m\rightarrow\infty$, i.e. $\{\delta(xp_n)\}_{n=1}^\infty$ is a Cauchy sequence in $(LS(\mathcal{M}),t(\mathcal{M}))$. Consequently, there exists $\widehat{\delta}(x)\in LS(\mathcal{M})$, such that $$t(\mathcal{M})-\lim_{n\rightarrow\infty}\delta(xp_n)=\widehat{\delta}(x).$$

Let us show that $t(\mathcal{M})-\lim_{n\rightarrow\infty}\delta(xq_n)=\widehat{\delta}(x)$.

For each $n\in\mathbb{N}$ we have
\begin{gather*}
\begin{split}
(p_n-q_n) & ((p_n-p_n\wedge q_n)\vee (q_n-p_n\wedge q_n))=\\ &=((p_n-p_n\wedge q_n)-(q_n-p_n\wedge q_n))((p_n-p_n\wedge q_n)\vee (q_n-p_n\wedge q_n))\\ &=(p_n-p_n\wedge q_n)-(q_n-p_n\wedge q_n)=p_n-q_n.
\end{split}
\end{gather*}
Hence, $$r(p_n-q_n)\leq ((p_n-p_n\wedge q_n)\vee
(q_n-p_n\wedge q_n)).$$
Since $$r(x(p_n-q_n))\leq r(p_n-q_n)$$ and $$l(x(p_n-q_n))\sim r(x(p_n-q_n)),$$ it follows
\begin{gather*}
\begin{split}
\mathcal{D}(s(x(p_n-q_n)))&=\mathcal{D}(l(x(p_n-q_n))\vee
r(x(p_n-q_n)))\\ &\stackrel{(D6)}{\leq}
\mathcal{D}(l(x(p_n-q_n)))+\mathcal{D}(r(x(p_n-q_n)))
=2\mathcal{D}(r(x(p_n-q_n)))\cr &\stackrel{(D6)}{\leq}
2\mathcal{D}(p_n-p_n\wedge q_n)+2\mathcal{D}(q_n-p_n\wedge q_n)\leq 4\mathcal{D}(\mathbf{1}-p_n\wedge q_n)\cr
&=4\mathcal{D}(p_n^\bot\vee q_n^\bot)\leq
4(\mathcal{D}(p_n^\bot)+\mathcal{D}(q_n^\bot)).
\end{split}
\end{gather*}

Since (see Lemma \ref{l3}) $$\mathcal{D}(s(\delta(xp_n)-\delta(xq_n)))=\mathcal{D}(s(\delta(x(p_n-q_n))))\leq 3\mathcal{D}(s(x(p_n-q_n))),$$ we have
$$\mathcal{D}(s(\delta(xp_n)-\delta(xq_n)))\leq 12(\mathcal{D}(p_n^\bot)+\mathcal{D}(q_n^\bot))\downarrow 0.$$ By Lemma \ref{l4}, we obtain $$t(\mathcal{M})-\lim_{n\rightarrow\infty}\delta(xq_n)=t(\mathcal{M})-\lim_{n\rightarrow\infty}\delta(xp_n)=\widehat{\delta}(x).$$
\end{proof}

Now, equipped with Lemma \ref{l6}, we may extend any derivation $\delta: \mathcal{M}\rightarrow LS(\mathcal{M})$ up to a derivation $\widehat{\delta}$ from $S(\mathcal{M})$ into $LS(\mathcal{M})$.

For each $x\in S(\mathcal{M})$ there exists a sequence $\{p_n\}\in\mathcal{P}(\mathcal{M})$, such that $p_n\uparrow\mathbf{1},\ p_n^\bot\in\mathcal{P}_{fin}(\mathcal{M}),\ xp_n\in\mathcal{M}$ for all $n\in\mathbb{N}$. By Lemma \ref{l6}, there exists $\widehat{\delta}(x)\in LS(\mathcal{M})$, such that $t(\mathcal{M})-\lim_{n\rightarrow\infty}\delta(xp_n)=\widehat{\delta}(x)$. In addition, the definition of $\widehat{\delta}(x)$ does not depend on a choice of sequence $\{p_n\}_{n\geq 1}$ satisfying the above mentioned property, in particular, $\widehat{\delta}(x)=\delta(x)$ for all $x\in\mathcal{M}$ (in this case, $p_n=\mathbf{1},\ n\in\mathbb{N}$).

\begin{proposition}
\label{p10}
The mapping $\widehat{\delta}$ is a unique derivation from $S(\mathcal{M})$ into $LS(\mathcal{M})$, such that $\widehat{\delta}(x)=\delta(x)$ for all $x\in\mathcal{M}$.
\end{proposition}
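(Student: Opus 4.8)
The plan is to check, in order, that $\widehat\delta$ is linear, that it obeys the Leibniz rule, and that it is unique; recall that the construction preceding the statement (via Lemma~\ref{l6}) already makes $\widehat\delta$ well defined and equal to $\delta$ on $\mathcal{M}$.

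For \emph{linearity}, given $x,y\in S(\mathcal{M})$ I would pick projections $p_n\uparrow\mathbf{1}$, $q_n\uparrow\mathbf{1}$ with $p_n^\bot,q_n^\bot\in\mathcal{P}_{fin}(\mathcal{M})$ and $xp_n,yq_n\in\mathcal{M}$, and pass to the common sequence $r_n:=p_n\wedge q_n$. Then $xr_n=(xp_n)r_n$, $yr_n=(yq_n)r_n$ and $(x+y)r_n$ all lie in $\mathcal{M}$, while $r_n^\bot=p_n^\bot\vee q_n^\bot$ is finite by (D6). The one point needing care is $r_n\uparrow\mathbf{1}$: here $r_n^\bot$ decreases, $\mathcal{D}(r_n^\bot)\downarrow\mathcal{D}(\inf_n r_n^\bot)$ by (D2) and (D5), while $\mathcal{D}(r_n^\bot)\le\mathcal{D}(p_n^\bot)+\mathcal{D}(q_n^\bot)\to 0$ by (D6) and (D7); hence $\inf_n r_n^\bot$ is a zero-dimensional projection, i.e. $0$. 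Independence of the defining sequence (Lemma~\ref{l6}) applied to $\{r_n\}$ then gives $\widehat\delta(x+y)=\lim_n\delta((x+y)r_n)=\widehat\delta(x)+\widehat\delta(y)$, and homogeneity is clear.

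The \emph{Leibniz rule} is the crux, and I would reduce it to the one-sided identity $\widehat\delta(xb)=\widehat\delta(x)b+x\delta(b)$ for $x\in S(\mathcal{M})$, $b\in\mathcal{M}$. First record the auxiliary fact that $\delta(p_n)\stackrel{t(\mathcal{M})}{\longrightarrow}0$ whenever $p_n\in\mathcal{P}_{fin}(\mathcal{M})$ and $p_n\downarrow 0$: by Lemma~\ref{l3}, $\mathcal{D}(s(\delta(p_n)))\le 3\mathcal{D}(p_n)\to 0$, so $\delta(p_n)\to 0$ by Lemma~\ref{l4}. The genuine obstacle is that the obvious taming of $xb$ built from a sequence for $x$ leaves a projection wedged between $x$ and $b$; I would avoid this by tailoring the taming sequence to $b$. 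With $xp_n\in\mathcal{M}$ as above, set $f_n:=\mathbf{1}-r(p_n^\bot b)$. Since $l(p_n^\bot b)\le p_n^\bot$ and $r(p_n^\bot b)\sim l(p_n^\bot b)$, the projection $r(p_n^\bot b)$ is finite and $\preceq p_n^\bot$; thus $f_n^\bot\in\mathcal{P}_{fin}(\mathcal{M})$, and $f_n\uparrow\mathbf{1}$ (again $f_n^\bot=r(p_n^\bot b)$ decreases to a zero-dimensional projection), and, crucially, $p_n^\bot bf_n=0$, so that $(xb)f_n=xp_nbf_n=(xp_n)(bf_n)$ factors through $\mathcal{M}$. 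Hence $\widehat\delta(xb)=\lim_n\delta((xp_n)(bf_n))=\lim_n\bigl(\delta(xp_n)(bf_n)+(xp_n)\delta(bf_n)\bigr)$; as $n\to\infty$ one has $xp_n\to x$, $\delta(xp_n)\to\widehat\delta(x)$, $bf_n\to b$, and $\delta(bf_n)=\delta(b)f_n+b\delta(f_n)\to\delta(b)$ (the last using the auxiliary fact for $f_n^\bot$), so the limit is $\widehat\delta(x)b+x\delta(b)$.

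With the one-sided rule established, the rest is quick. For $x,y\in S(\mathcal{M})$, tame $y$ by $q_n$ and split $xy=x(yq_n)+(xy)q_n^\bot$; applying the one-sided rule to both summands gives $\widehat\delta(xy)q_n=\widehat\delta(x)(yq_n)+x\delta(yq_n)+(xy)\delta(q_n^\bot)$, and letting $n\to\infty$ (so $q_n\to\mathbf{1}$ by Corollary~\ref{cc1}, $\delta(yq_n)\to\widehat\delta(y)$ by the definition of $\widehat\delta(y)$, and $\delta(q_n^\bot)\to 0$ by the auxiliary fact) yields $\widehat\delta(xy)=\widehat\delta(x)y+x\widehat\delta(y)$. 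Finally, for uniqueness, if $\delta_1$ is any derivation of $S(\mathcal{M})$ with $\delta_1|_{\mathcal{M}}=\delta$, then for $xp_n\in\mathcal{M}$ both $\delta_1$ and $\widehat\delta$ agree on $xp_n$ and obey the Leibniz rule, whence $(\delta_1(x)-\widehat\delta(x))p_n=\delta_1(xp_n)-\widehat\delta(xp_n)=0$; since $p_n\to\mathbf{1}$ (Corollary~\ref{cc1}) and multiplication is $t(\mathcal{M})$-continuous, $\delta_1(x)=\widehat\delta(x)$. I expect the construction of the tailored taming sequence $f_n$, which makes the intermediate projection vanish, to be the main obstacle; everything else is bookkeeping with the dimension function and the $t(\mathcal{M})$-continuity of multiplication.
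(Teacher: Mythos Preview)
Your proof is correct and takes a different route to the Leibniz rule than the paper. The paper constructs, in one shot, a single taming sequence $s_n$ for the product $xy$ by combining the spectral projections $E_n(|x|)$ and $E_n(|y|)$: it sets $g_n=\mathbf{1}-r(E_n^\bot(|x|)y_n)$ with $y_n=yE_n(|y|)$, then $s_n=g_n\wedge E_n(|y|)$, verifies $s_n\uparrow\mathbf{1}$ and the factorization $xys_n=xE_n(|x|)\,yE_n(|y|)\,s_n$, and computes $\widehat\delta(xy)$ directly from $\delta(xE_n(|x|)\,yE_n(|y|)\,s_n)$. Your approach instead proves the intermediate one-sided rule $\widehat\delta(xb)=\widehat\delta(x)b+x\delta(b)$ for $b\in\mathcal{M}$ using the simpler tailored sequence $f_n=\mathbf{1}-r(p_n^\bot b)$, and then bootstraps to general $y\in S(\mathcal{M})$ by splitting $xy=x(yq_n)+(xy)q_n^\bot$. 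Both constructions rest on the same device---killing an unwanted factor by projecting away the right support of the obstruction---but your two-step version keeps the bookkeeping lighter at each stage and in particular avoids the paper's somewhat delicate verification that $s_n$ is increasing. Your uniqueness argument, which pairs the Leibniz identity for $\delta_1$ and $\widehat\delta$ to get $(\delta_1(x)-\widehat\delta(x))p_n=0$ directly, is also cleaner than the paper's, which expands $\delta_1(x)E_n(|x|)$ and then separately shows that the residual term $x\lim_n\delta(E_n(|x|))$ vanishes.
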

\begin{proof}
For $x,y\in S(\mathcal{M})$ select $p_n,q_n\in \mathcal{P}(\mathcal{M}),\ n\in\mathbb{N}$, such that $$p_n\uparrow\mathbf{1},\ q_n\uparrow\mathbf{1},\ p_n^\bot,q_n^\bot\in\mathcal{P}_{fin}(\mathcal{M}),\ xp_n,yq_n\in\mathcal{M}$$ for all $n\in\mathbb{N}$. The sequence of projections $e_n=p_n\wedge q_n$ is increasing, and, in addition, \begin{gather*}
xe_n=xp_ne_n\in\mathcal{M},\ ye_n=yq_ne_n\in\mathcal{M},\\
 e_n^\bot=p_n^\bot\vee q_n^\bot\in\mathcal{P}_{fin}(\mathcal{M}),\mathcal{D}(e_n^\bot)\leq\mathcal{D}(p_n^\bot)+\mathcal{D}(q_n^\bot)\downarrow 0.
\end{gather*}
The last estimate implies the convergence $e_n^\bot\downarrow 0$ (see (D7)), or $e_n\uparrow\mathbf{1}$. By Lemma \ref{l6}, we have
\begin{gather*}
\begin{split}
\widehat{\delta}(x+y)&=t(\mathcal{M})-\lim_{n\rightarrow\infty}\delta((x+y)e_n)=
\\
&=\bigl(t(\mathcal{M})-\lim_{n\rightarrow\infty}\delta(xe_n)\bigl) +\bigl(t(\mathcal{M})-\lim_{n\rightarrow\infty}\delta(ye_n)\bigl)=\widehat{\delta}(x)+\widehat{\delta}(y).
\end{split}
\end{gather*}

Similarly, $\widehat{\delta}(\lambda x)=\lambda\widehat{\delta}(x)$ for all $\lambda\in\mathbb{C}$.

Let us show that $\widehat{\delta}(xy)=\widehat{\delta}(x)y+x\widehat{\delta}(y),x,y\in S(\mathcal{M})$.

Due to polar decomposition $y=u|y|,\ u^*u=r(y)$, we have
$y_n=yE_n(|y|)\in\mathcal{M}$ for all $n\in\mathbb{N}$. Set
$$g_n=\mathbf{1}-r(E_n^\bot(|x|)y_n),\ s_n=g_n\wedge E_n(|y|).$$
Since $$g_n^\bot=r(E_n^\bot(|x|)y_n)\sim
l(E_n^\bot(|x|)y_n)\leq E_n^\bot(|x|),$$ we obtain $$g_n^\bot\preceq
E_n^\bot(|x|).$$ Since $x\in S(\mathcal{M})$, there exists $n_0\in\mathbb{N}$ such that
$E_n^\bot(|x|)\in\mathcal{P}_{fin}(\mathcal{M})$ for all $n\geq
n_0$, and therefore $g_n^\bot\in\mathcal{P}_{fin}(\mathcal{M})$ for
all $n\geq n_0$. The equality
$$y_ng_n=E_n(|x|)y_ng_n+E_n^\bot(|x|)y_ng_n=E_n(|x|)y_ng_n$$
implies that
\begin{gather*}
\begin{split}
E_{n+1}^\bot(|x|)y_{n+1}s_n&=E_{n+1}^\bot(|x|)E_n^\bot(|x|y_{n+1}E_n(|y|))s_n=\\
&=
E_{n+1}^\bot(|x|)(E_n^\bot(|x|)y_nE_n(|y|))s_n=\\&
=E_{n+1}^\bot(|x|)(E_n^\bot(|x|)y_ns_n)=E_{n+1}^\bot(|x|)(E_n^\bot(|x|)y_ng_n)s_n=0,
\end{split}
\end{gather*}
in particular, $$s_n\leq
\mathbf{1}-r(E_{n+1}^\bot(|x|)y_{n+1})=g_{n+1}$$ for all
$n\in\mathbb{N}$. From here and from the inequalities $s_n\leq E_n(|y|)\leq
E_{n+1}(|y|)$ it follows that $s_n\leq s_{n+1}$.

Since $y\in S(\mathcal{M})$, we have $E_n^\bot(|y|)\in \mathcal{P}_{fin}(\mathcal{M})$ for $n\geq n_1$ for some $n_1\geq n_0$. Hence, $$s_n^\bot=g_n^\bot\vee E_n^\bot(|y|)\in\mathcal{P}_{fin}(\mathcal{M})$$ for $n\geq n_1$ and $$\mathcal{D}(s_n^\bot)\leq\mathcal{D}(g_n^\bot)+\mathcal{D}(E_n^\bot(|y|))\leq (\mathcal{D}(E_n^\bot(|x|))+\mathcal{D}(E_n^\bot(|y|)))\downarrow 0,$$ i.e. $s_n^\bot\downarrow 0$ or $s_n\uparrow\mathbf{1}$.

Using Corollary \ref{cc1}, Lemma \ref{l6}, the inclusions $xE_n(|x|)\in\mathcal{M},\ yE_n(|y|)\in\mathcal{M}$ and equalities
\begin{gather*}
\begin{split}
xys_n&=xyE_n(|y_n|)s_n=xy_ns_n=xy_ng_ns_n=
\\
&=xE_n(|x|)y_nq_ns_n=
xE_n(|x|)yE_n(|y|)s_n,
\end{split}
\end{gather*}
we obtain
\begin{gather*}
\begin{split}
\widehat{\delta}(xy)=&t(\mathcal{M})-\lim_{n\rightarrow\infty}\delta(xys_n)= t(\mathcal{M})-\lim_{n\rightarrow\infty}\delta(xE_n(|x|)yE_n(|y|s_n))=
\\&
=t(\mathcal{M})-\lim_{n\rightarrow\infty}\bigl(\delta(xE_n(|x|))ys_n+xE_n(|x|)\delta(ys_n)\bigl)=
\\&
=\bigl(t(\mathcal{M})-\lim_{n\rightarrow\infty}\delta(xE_n(|x|))\bigl)\cdot
\bigl(t(\mathcal{M})-\lim_{n\rightarrow\infty}ys_n\bigl)+\cr &+
\bigl(t(\mathcal{M})-\lim_{n\rightarrow\infty}xE_n(|x|)\bigl)\cdot
\bigl(t(\mathcal{M})-\lim_{n\rightarrow\infty}\delta(ys_n)\bigl)=\widehat{\delta}(x)y+x\widehat{\delta}(y).
\end{split}
\end{gather*}
Consequently, $\widehat{\delta}: S(\mathcal{M})\rightarrow
LS(\mathcal{M})$ is a derivation, such that
$\widehat{\delta}(x)=\delta(x)$ for all $x\in\mathcal{M}$.

Let $\delta_1: S(\mathcal{M})\rightarrow LS(\mathcal{M})$ also be a derivation, for which $\delta_1(x)=\delta(x)$ for all $x\in\mathcal{M}$. If $x\in S(\mathcal{M})$, then $E_n(|x|)\uparrow\mathbf{1},\ xE_n(|x|)\in\mathcal{M},\ n\in\mathbb{N},\ E_n^\bot(|x|)\in\mathcal{P}_{fin}(\mathcal{M})$ for all $n\geq n_3$ for some $n_3\in\mathbb{N}$. Hence, $E_n(|x|)\stackrel{t(\mathcal{M})}{\longrightarrow} \mathbf{1}$ (see Corollary \ref{cc1}).   Since $(LS(\mathcal{M}),t(\mathcal{M}))$ is a topological algebra, it follows that
\begin{gather*}
\begin{split}
\delta_1(x)&=t(\mathcal{M})-\lim_{n\rightarrow\infty}\delta_1(x)E_n(|x|)=
\cr
&=\bigl(t(\mathcal{M})-\lim_{n\rightarrow\infty}\delta_1(xE_n(|x|))\bigl)-
\bigl(t(\mathcal{M})-\lim_{n\rightarrow\infty}x\delta_1(E_n(|x_n|))\bigl)=
\cr
&=\bigl(t(\mathcal{M})-\lim_{n\rightarrow\infty}\delta(xE_n(|x|))\bigl)-\bigl(
t(\mathcal{M})-\lim_{n\rightarrow\infty}x\delta(E_n(|x_n|))\bigl)=\cr &=
\widehat{\delta}(x)-x(t(\mathcal{M})-\lim_{n\rightarrow\infty}\delta(E_n(|x_n|))).
\end{split}
\end{gather*}
Since $\delta(\mathbf{1})=0,\ s(x)=s(-x)$ for $x\in LS(\mathcal{M})$, it follows via Lemma \ref{l3}, that
\begin{gather*}
\begin{split}
\mathcal{D}(s(\delta(E_n(|x|))))&= \mathcal{D}(s(\delta(-E_n(|x|))))=\\
&=\mathcal{D}(s(\delta(\mathbf{1}-E_n(|x|))))\leq 3\mathcal{D}(E_n^\bot(|x|))\downarrow 0.
\end{split}
\end{gather*}  By Lemma \ref{l4}, we obtain $\delta(E_n(|x|))\stackrel{t(\mathcal{M})}{\longrightarrow} 0$, that implies the equality $\delta_1(x)=\widehat{\delta}(x)$.
\end{proof}

Propositions \ref{p9} and \ref{p10} imply the following theorem, which is the main result of this section.

\begin{theorem}
\label{ext}
Let $\mathcal{A}$ be a subalgebra of $LS(\mathcal{M})$, $\mathcal{M}\subset\mathcal{A}$ and let $\delta: \mathcal{A}\rightarrow LS(\mathcal{M})$ be a derivation. Then there exists a unique derivation $\delta_{\mathcal{A}}: LS(\mathcal{M})\rightarrow LS(\mathcal{M})$ such that $\delta_{\mathcal{A}}(x)=\delta(x)$ for all $x\in\mathcal{A}$.
\end{theorem}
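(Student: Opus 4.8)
The plan is to construct $\delta_{\mathcal{A}}$ by composing the two extension results already established, and then to verify that the resulting derivation agrees with $\delta$ on all of $\mathcal{A}$, not merely on $\mathcal{M}$. First I would restrict $\delta$ to $\mathcal{M}$. Applying Proposition \ref{p10} to $\delta|_{\mathcal{M}}:\mathcal{M}\to LS(\mathcal{M})$ yields a derivation $\widehat{\delta}:S(\mathcal{M})\to LS(\mathcal{M})$ with $\widehat{\delta}|_{\mathcal{M}}=\delta|_{\mathcal{M}}$, and then Proposition \ref{p9} applied to $\widehat{\delta}$ produces a derivation $\delta_{\mathcal{A}}:LS(\mathcal{M})\to LS(\mathcal{M})$ with $\delta_{\mathcal{A}}|_{S(\mathcal{M})}=\widehat{\delta}$; in particular $\delta_{\mathcal{A}}|_{\mathcal{M}}=\delta|_{\mathcal{M}}$. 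It then remains to check that $\delta_{\mathcal{A}}(x)=\delta(x)$ for every $x\in\mathcal{A}$.

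The crux is the following vanishing statement, which I would isolate as a lemma: if $\mathcal{M}\subset\mathcal{B}\subset LS(\mathcal{M})$ and $D:\mathcal{B}\to LS(\mathcal{M})$ is a derivation with $D|_{\mathcal{M}}=0$, then $D\equiv 0$. Granting this, existence follows by setting $D=\delta_{\mathcal{A}}|_{\mathcal{A}}-\delta$, which is a derivation on $\mathcal{A}$ vanishing on $\mathcal{M}$, whence $\delta_{\mathcal{A}}|_{\mathcal{A}}=\delta$. Uniqueness follows because any two derivations on $LS(\mathcal{M})$ extending $\delta$ agree on $\mathcal{A}\supset\mathcal{M}$, so their difference is a derivation on $LS(\mathcal{M})$ vanishing on $\mathcal{M}$, hence zero by the lemma applied with $\mathcal{B}=LS(\mathcal{M})$.

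To prove the vanishing lemma I would argue in two layers, exploiting the $t(\mathcal{M})$-continuity of multiplication. Since $\mathcal{P}(\mathcal{Z}(\mathcal{M}))\subset\mathcal{M}\subset\mathcal{B}$, Lemma \ref{l1} gives $D(zx)=zD(x)$ for every central projection $z$. Fix $x\in\mathcal{B}$ and choose central projections $z_n\uparrow\mathbf{1}$ with $w_n:=xz_n\in S(\mathcal{M})$; note $w_n\in\mathcal{B}$ and $D(w_n)=z_nD(x)$. For a fixed $w:=w_n$ take the spectral projections $p_m=E_m(|w|)\in\mathcal{M}$, so that $p_m\uparrow\mathbf{1}$, $wp_m\in\mathcal{M}$, and $p_m^{\bot}\in\mathcal{P}_{fin}(\mathcal{M})$ for all large $m$; then $0=D(wp_m)=D(w)p_m+wD(p_m)=D(w)p_m$, because $p_m\in\mathcal{M}$ forces $D(wp_m)=0$ and $D(p_m)=0$. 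Corollary \ref{cc1} gives $p_m^{\bot}\stackrel{t(\mathcal{M})}{\longrightarrow}0$, i.e.\ $p_m\to\mathbf{1}$, and continuity of multiplication yields $D(w)=t(\mathcal{M})\text{-}\lim_m D(w)p_m=0$. Hence $z_nD(x)=0$ for all $n$; finally $z_n^{\bot}\downarrow 0$ forces $z_n\to\mathbf{1}$ in $t(\mathcal{M})$ by Proposition \ref{plm-spk1}(i) applied to the central net $z_n^{\bot}$, so $D(x)=\lim_n z_nD(x)=0$.

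The main obstacle is precisely this lemma, and the reason it requires care is that the passage from $\mathcal{M}$ to $S(\mathcal{M})$ to $LS(\mathcal{M})$ must be handled by two successive approximations: spectral cut-offs $E_m(|w|)$ to land inside $\mathcal{M}$, and central cut-offs $z_n$ to land inside $S(\mathcal{M})$. At each stage one needs both the derivation identity and the $t(\mathcal{M})$-convergence of the cut-offs to $\mathbf{1}$. I expect the bookkeeping to confirm that no continuity hypothesis on $D$ is needed here; only the continuity of the algebra multiplication and the order-convergence facts recorded in Corollary \ref{cc1} and Proposition \ref{plm-spk1} are used, which is exactly why the existence and uniqueness of the extension can be established independently of the continuity theorem (Theorem \ref{main}), with the two results then combining to give $t(\mathcal{M})$-continuity of $\delta$ on $\mathcal{A}$.
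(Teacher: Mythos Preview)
Your proof is correct and follows essentially the same approach as the paper: construct the extension via Propositions \ref{p10} and \ref{p9}, then verify agreement with $\delta$ on all of $\mathcal{A}$ using the two-layer (spectral, then central) approximation. Your packaging is slightly cleaner---you isolate the vanishing lemma and apply it uniformly for both existence and uniqueness, whereas the paper verifies $\delta_{\mathcal{A}}(az_n)=\delta(az_n)$ by appealing to the uniqueness argument inside Proposition \ref{p10}---but the underlying computation (using $D(wp_m)=0$, $D(p_m)=0$, and $p_m\to\mathbf{1}$ in $t(\mathcal{M})$) is the same.
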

\begin{proof}
Since $\mathcal{M}\subset\mathcal{A}$, the restriction $\delta_0$ of the derivation $\delta$ on $\mathcal{M}$ is a well-defined derivation from $\mathcal{M}$ into $LS(\mathcal{M})$. Hence, by Propositions \ref{p9} and \ref{p10}, the mapping $\delta_{\mathcal{A}}=\widetilde{\widehat{\delta}}$ is a unique derivation from $LS(\mathcal{M})$ into $LS(\mathcal{M})$ such that $\delta_{\mathcal{A}}(x)=\delta_0(x)$ for all $x\in\mathcal{M}$. Let us show that $\delta_{\mathcal{A}}(a)=\delta(a)$ for every $a\in\mathcal{A}$. If $a\in\mathcal{A}$, then there exists a sequence $\{z_n\}_{n=1}^\infty\subset\mathcal{P}(\mathcal{Z}(\mathcal{M}))$, such that $z_n\uparrow\mathbf{1}$ and $az_n\in S(\mathcal{M}),\ n\in\mathbb{N}$. Since $z_n\stackrel{t(\mathcal{M})}{\longrightarrow}\mathbf{1}$ (see Proposition \ref{p8}(i)), we have, by Lemma \ref{l1}, $$\delta_{\mathcal{A}}(a)=t(\mathcal{M})-\lim_{n\rightarrow\infty}\delta_{\mathcal{A}}(a)z_n= t(\mathcal{M})-\lim_{n\rightarrow\infty}\delta_{\mathcal{A}}(az_n),$$ and, similarly, $\delta(a)=t(\mathcal{M})-\lim_{n\rightarrow\infty}\delta(az_n)$.

Using the equality $\delta_{\mathcal{A}}(x)=\delta_0(x)=\delta(x)$ for each $x\in\mathcal{M}$, and following the proof of uniqueness of the derivation $\widehat{\delta}$ from Proposition \ref{p10}, we obtain $\delta_{\mathcal{A}}(az_n)=\delta(az_n)$ for all $n\in\mathbb{N}$, that implies the equality $\delta_{\mathcal{A}}(a)=\delta(a)$.
\end{proof}

The following corollary immediately follows from Theorems \ref{main} and \ref{ext}.
\begin{corollary}
\label{c2}
Let $\mathcal{M}$ be a properly infinite von Neumann algebra, $\mathcal{A}$ is a subalgebra in $LS(\mathcal{M})$ and $\mathcal{M}\subset\mathcal{A}$. Then any derivation $\delta: \mathcal{A}\rightarrow LS(\mathcal{M})$ is continuous with respect to the local measure topology $t(\mathcal{M})$.
\end{corollary}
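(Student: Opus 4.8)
The plan is to obtain the result as an immediate consequence of the two main theorems already in hand, Theorem \ref{main} and Theorem \ref{ext}, so that essentially all of the work is already done and what remains is only to assemble it. First I would apply Theorem \ref{ext}: since $\mathcal{M}\subset\mathcal{A}\subset LS(\mathcal{M})$ and $\delta:\mathcal{A}\rightarrow LS(\mathcal{M})$ is a derivation, that theorem furnishes a (unique) derivation $\delta_{\mathcal{A}}:LS(\mathcal{M})\rightarrow LS(\mathcal{M})$ with $\delta_{\mathcal{A}}(x)=\delta(x)$ for every $x\in\mathcal{A}$. In particular, $\delta$ is exactly the restriction $\delta_{\mathcal{A}}|_{\mathcal{A}}$ of this globally defined derivation.

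Next, because $\mathcal{M}$ is properly infinite, Theorem \ref{main} applies directly to $\delta_{\mathcal{A}}$ and guarantees that $\delta_{\mathcal{A}}$ is continuous as a map from $(LS(\mathcal{M}),t(\mathcal{M}))$ into itself. Finally, equipping $\mathcal{A}$ with the topology inherited from $t(\mathcal{M})$, the restriction of a $t(\mathcal{M})$-continuous map to the subspace $\mathcal{A}$ is continuous; since $\delta=\delta_{\mathcal{A}}|_{\mathcal{A}}$, this yields at once the $t(\mathcal{M})$-continuity of $\delta$, which is the assertion of the corollary.

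There is no genuine obstacle remaining at this stage: all of the analytic difficulty has already been absorbed, on the one hand into Theorem \ref{main} (the $F$-space reduction and the closed-graph argument for properly infinite $\mathcal{M}$), and on the other into the extension machinery of Section~4, namely the consistency Proposition \ref{p8} together with the successive extensions $\delta\mapsto\widehat{\delta}\mapsto\widetilde{\widehat{\delta}}$ culminating in Theorem \ref{ext}. The only point I would take care to state explicitly is that continuity of $\delta$ is understood with $\mathcal{A}$ carrying the subspace topology induced by $t(\mathcal{M})$, so that the passage from continuity of the global extension $\delta_{\mathcal{A}}$ to continuity of its restriction $\delta$ is automatic and requires no further estimates.
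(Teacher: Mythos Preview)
Your proof is correct and is exactly the argument the paper has in mind: the corollary is stated there as an immediate consequence of Theorems~\ref{main} and~\ref{ext}, and your write-up simply spells out that deduction (extend via Theorem~\ref{ext}, apply Theorem~\ref{main} to the extension, then restrict). Nothing further is needed.
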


In particular, Corollary \ref{c2} implies that for a properly infinite von Neumann algebra $\mathcal{M}$ any derivation $\delta: S(\mathcal{M})\rightarrow S(\mathcal{M})$ is $t(\mathcal{M})$-continuous.  Note, that in case, when $\mathcal{M}$ is of type $I_\infty$, any derivation of $S(\mathcal{M})$ is inner \cite{AK}, and therefore is automatically continuous with respect to the topology $t(\mathcal{M})$.

\section{Applications to the algebra $S(\mathcal{M},\tau)$ of $\tau$-measurable operators}

Let $\mathcal{M}$ be a semifinite von Neumann algebra acting on Hilbert space $H$, $\tau$ be a faithful normal semifinite trace on $\mathcal{M}$. An operator $x\in S(\mathcal{M})$ with domain $\mathfrak{D}(x)$ is called \emph{$\tau$-measurable} if for any $\varepsilon>0$ there exists a projection $p\in\mathcal{P}(\mathcal{M})$ such that $p(H)\subset \mathfrak{D}(x)$ and $\tau(p^\bot)<\infty$.

The set $S(\mathcal{M},\tau)$ of all $\tau$-measurable operators is a $*$-subalgebra of $S(\mathcal{M})$ such that $\mathcal{M}\subset S(\mathcal{M},\tau)$. If the trace $\tau$ is finite, then $S(\mathcal{M},\tau)=S(\mathcal{M})$. The algebra $S(\mathcal{M},\tau)$ is a noncommutative version of the algebra of all measurable complex functions $f$ defined on $(\Omega,\Sigma,\mu)$, for which $\mu(\{|f|>\lambda\})\rightarrow 0$ for $\lambda\rightarrow\infty$. For each $x\in S(\mathcal{M},\tau)$ it is possible to define the generalized singular value function
$$\mu_t(x)=\inf\{\lambda>0:\tau(E_\lambda^\bot(|x|)<t\}=\inf\{\|x(\mathbf{1}-e)\|_\mathcal{M}:e\in\mathcal{P(M)},\tau(e)<t\},$$
which allows to define and  study a noncommutative version of rearrangement invariant function spaces. For the theory of the latter spaces, we refer to \cite{DDdP},\cite{K-S}.

Let $t_\tau$ be the measure topology \cite{Ne} on $S(\mathcal{M},\tau)$, whose base of neighborhoods of zero is given by $U(\varepsilon,\delta)=\{x\in S(\mathcal{M},\tau):\ \exists p\in \mathcal{P}(\mathcal{M}),\ \tau(p^\bot)\leq\delta,\ xp\in\mathcal{M},\ \|xp\|_{\mathcal{M}}\leq\varepsilon \}$, $\varepsilon>0,\ \delta>0$.

The pair $(S(\mathcal{M},\tau),t_\tau)$ is a complete metrizable topological $*$-algebra. Here, the topology $t_\tau$ majorizes the topology $t(\mathcal{M})$ on $S(\mathcal{M},\tau)$ and, if $\tau$ is a finite trace, the topologies $t_\tau$ and $t(\mathcal{M})$ coincide \cite[\S\S\,3.4,3.5]{MCh}.

However, if $\tau(\mathbf{1})=\infty$, then on $(S(\mathcal{M},\tau),t_\tau)$ topologies $t_\tau$ and $t(\mathcal{M})$ do not coincide in general \cite{CM}. For example, when $\mathcal{M}=L^\infty(\Omega,\Sigma,\mu),\tau(f)=\int_\Omega fd\mu,f\in L^\infty_+(\Omega),$ where $\mu$ is a $\sigma$-finite measure, $\mu(\Omega)=\infty$, the topology $t_\tau$ in $S(L^\infty(\Omega),\tau)$ coincide with the topology of convergence in measure $\mu$, and the topology $t(L^\infty(\Omega))$ is the topology of convergence locally in measure $\mu$ (Section 2), in particular, if $A_n\in\Sigma,\mu(A_n)=\infty,n\in\mathbb{N}$ and $\chi_{A_n}\downarrow 0$, then $\chi_{A_n}\xrightarrow{t(L^\infty(\Omega))} 0$, whereas $\chi_{A_n}\stackrel{t_\tau}{\nrightarrow}0$. See the detailed comparison of topologies $t_\tau$ and $t(\mathcal{M})$ in \cite{CM}.

It is proved in \cite{Ber} that in a properly infinite von Neumann algebra $\mathcal{M}$ any derivation  $\delta: S(\mathcal{M},\tau)\rightarrow S(\mathcal{M},\tau)$ is continuous with respect to the topology $t_\tau$. Corollary \ref{c2} implies that, in this case, derivation $\delta: S(\mathcal{M},\tau)\rightarrow S(\mathcal{M},\tau)$ is continuous with respect to the topology $t(\mathcal{M})$ too.

Now, we give an application of Theorem \ref{ext} to derivations defined on absolutely solid $*$-subalgebras of the algebra $LS(\mathcal{M})$.

Recall \cite{BdPS}, that a $*$-subalgebra $\mathcal{A}$ of $LS(\mathcal{M})$ is called absolutely solid if conditions $x\in LS(\mathcal{M}),\ y\in\mathcal{A},\ |x|\leq |y|$ imply that $x\in\mathcal{A}$.
In \cite[Proposition 5.13]{BdPS} it is proved that, if $\delta$ is a derivation on absolutely solid $*$-subalgebra $\mathcal{A}\supset\mathcal{M}$ and $\delta(x)=[w,x]$ for all $x\in\mathcal{A}$ and some $w\in LS(\mathcal{M})$, then there exists $w_1\in\mathcal{A}$, such that $\delta(x)=[w_1,x]$  for all $x\in\mathcal{A}$, i.e. the derivation $\delta$ is inner on the $*$-subalgebra $\mathcal{A}$. This observation and Theorem \ref{ext} yield our final result

\begin{corollary}
\label{c3}
Suppose that all derivations on the algebra $LS(\mathcal{M})$ are inner and let $\mathcal{A}\supset\mathcal{M}$ be an absolutely solid $*$-subalgebra of $LS(\mathcal{M})$. Then all derivations on $\mathcal{A}$  are inner. In particular, any derivation on the algebras $S(\mathcal{M})$ and $S(\mathcal{M},\tau)$ are inner.
\end{corollary}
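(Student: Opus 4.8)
The plan is to reduce the statement to the two results already available: the extension Theorem~\ref{ext} and the cited \cite[Proposition 5.13]{BdPS}. Let $\delta$ be an arbitrary derivation on $\mathcal{A}$; since $\mathcal{A}\subset LS(\mathcal{M})$ we may regard it as a derivation $\delta:\mathcal{A}\rightarrow LS(\mathcal{M})$, so the case in which $\delta$ takes values in $\mathcal{A}$ is automatically covered. Because $\mathcal{M}\subset\mathcal{A}$, Theorem~\ref{ext} applies and furnishes a unique derivation $\delta_{\mathcal{A}}:LS(\mathcal{M})\rightarrow LS(\mathcal{M})$ with $\delta_{\mathcal{A}}(x)=\delta(x)$ for every $x\in\mathcal{A}$.

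First I would invoke the hypothesis that every derivation of $LS(\mathcal{M})$ is inner. Applied to $\delta_{\mathcal{A}}$, this produces $w\in LS(\mathcal{M})$ with $\delta_{\mathcal{A}}(x)=[w,x]=wx-xw$ for all $x\in LS(\mathcal{M})$. Restricting to $\mathcal{A}$ gives $\delta(x)=[w,x]$ for every $x\in\mathcal{A}$, where a priori only $w\in LS(\mathcal{M})$. This is precisely the situation treated in \cite[Proposition 5.13]{BdPS}: a derivation on the absolutely solid $*$-subalgebra $\mathcal{A}\supset\mathcal{M}$ implemented by an element $w$ of the ambient algebra $LS(\mathcal{M})$. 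That proposition then supplies $w_1\in\mathcal{A}$ with $\delta(x)=[w_1,x]$ for all $x\in\mathcal{A}$. Since $w_1,x\in\mathcal{A}$ and $\mathcal{A}$ is an algebra, $[w_1,x]\in\mathcal{A}$, so $\delta=\delta_{w_1}$ is an inner derivation on $\mathcal{A}$, as required.

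For the final assertion it remains only to check that $S(\mathcal{M})$ and $S(\mathcal{M},\tau)$ are absolutely solid $*$-subalgebras of $LS(\mathcal{M})$ containing $\mathcal{M}$; once this is done the general statement applies verbatim. The containment of $\mathcal{M}$ and the $*$-subalgebra property are clear. For absolute solidity, suppose $x\in LS(\mathcal{M})$, $y\in S(\mathcal{M})$ and $|x|\leq|y|$. The order relation $|x|\leq|y|$ yields $E^\bot_\lambda(|x|)\preceq E^\bot_\lambda(|y|)$ for each $\lambda>0$ (monotonicity of the spectral distribution under the order of positive operators), and since $y\in S(\mathcal{M})$ we have $E^\bot_\lambda(|y|)\in\mathcal{P}_{fin}(\mathcal{M})$ for some $\lambda>0$; hence $E^\bot_\lambda(|x|)$ is finite and $x\in S(\mathcal{M})$ by the measurability criterion recalled in Section~2. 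The same argument, phrased through the generalized singular value function and the inequality $\mu_t(x)\leq\mu_t(y)$, shows that $S(\mathcal{M},\tau)$ is absolutely solid.

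The proof is genuinely short because the two substantial ingredients---the existence of the $t(\mathcal{M})$-controlled extension $\delta_{\mathcal{A}}$ (Theorem~\ref{ext}) and the ``descent'' of the implementing element from $LS(\mathcal{M})$ into $\mathcal{A}$ (\cite[Proposition 5.13]{BdPS})---are already in hand. The only point demanding care, and the one I would treat as the main (minor) obstacle, is the verification of absolute solidity for the concrete algebras $S(\mathcal{M})$ and $S(\mathcal{M},\tau)$, that is, the monotonicity $|x|\leq|y|\Rightarrow E^\bot_\lambda(|x|)\preceq E^\bot_\lambda(|y|)$ and its singular-value analogue; these are standard comparison facts but are exactly what make the ``in particular'' clause legitimate.
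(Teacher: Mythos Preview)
Your proof is correct and follows exactly the approach the paper intends: extend $\delta$ via Theorem~\ref{ext}, use the innerness hypothesis on $LS(\mathcal{M})$, then descend the implementing element via \cite[Proposition~5.13]{BdPS}. The paper does not spell out the absolute solidity of $S(\mathcal{M})$ and $S(\mathcal{M},\tau)$, treating it as standard; your spectral-comparison argument $|x|\le|y|\Rightarrow E^\bot_\lambda(|x|)\preceq E^\bot_\lambda(|y|)$ is the right justification and makes the ``in particular'' clause honest.
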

The result of Corollary \ref{c3} extends and generalizes \cite[Theorem 6.8]{AK} and \cite[Proposition 5.17]{BdPS}. Note, that the conditions of Corollary \ref{c3} hold, in particular, for properly infinite von Neumann algebras, which do not have direct summand of type $II_\infty$ \cite{AAK},\cite{AK},\cite{BdPS}.


\begin{thebibliography}{99}

\bibitem{AAK} S.Albeverio, Sh.Ayupov and K.Kudaibergenov, \textit{Derivations on the algebra of measurable operators
 affiliated with type $I$ von Neumann algebras}, J. Funct. Anal., {\bf 256} (2009), No. 9, 2917-2943.

\bibitem{AK} Sh.A.Ayupov and K.K.Kudaybergenov,
\textit{Derivations on algebras of measurable operators}, Infinite
Dimensional Analysis, Quantum Probability and Related Topics {\bf 13}
(2010), No 2, 305--337.

\bibitem{BdPS} A.F.Ber, B.de Pagter and F.A.Sukochev, \textit{Derivations in
algebras of operator-valued functions}, J. Operator Theory, \textbf{66} (2011), No. 2, 261--300.

\bibitem{BCS} A.F.Ber, V.I.Chilin, F.A.Sukochev, \textit{Non-trivial derivations
on commutative regular algebras}, Extracta Math. {\bf 21} (2006),
107-147.


\bibitem{Ber} A.F.Ber, \textit{Continuity of derivations on properly infinite *-algebras of
$\tau$-measurable operators},  Mat. Zametki {\bf 90} (2011) No. 5, 776--780
(Russian). English translation: \textit{Math. Notes}, {\bf 90}
(2011) No. 5-6, 758--762.

\bibitem{Brat} O.Bratelli and D.W.Robinson, \textit{Operator algebras and quantum
statistical mechanics 1}, New York: Springer-Verlag, 1979.

\bibitem{CM} V.I.Chilin and M.A.Muratov,  \textit{Comparision of topologies on $*$-algebras of locally measurable operators}, Positivity (2012).

\bibitem{DDdP} P.G. Dodds, T.K. Dodds and B. de Pagter, \textit{
Noncommutative  Banach function spaces}, Math. Z., \textbf{201} (1989), 583-597.

\bibitem{Halmos} P. Halmos,\textit{ Measure Theory}, Springer-Verlag, New York, 1974.

\bibitem{KR} R.V.Kadison and J.R.Ringrose, \textit{Fundamentals
of the Theory of Operator Algebras II}, Academic Press, Orlando,
1986.

\bibitem {K-S} N.J. Kalton, F.A. Sukochev, \textit{Symmetric norms and spaces of operators.} J. Reine Angew. Math.  \textbf{621} (2008), 81--121.

\bibitem{Kusraev} A.G.Kusraev, \textit{Automorphisms and derivations in an extended complex $f$-algebra,} Siberian Math. J. {\bf 47} (2006), No. 1, 77--85.

\bibitem{MCh} M.A.Muratov and V.I.Chilin,  \textit{Algebras of measurable and locally measurable operators.}~-- Kyiv, Pratsi In-ty matematiki NAN
Ukraini. {\bf 69} (2007), 390~pp.(Russian).

\bibitem{MCh2} M.A.Muratov and V.I.Chilin,  \textit{Central extensions of $*$-algebras of measurable operators.}
Reports of the National Academy of Science of Ukraine, {\bf 7} (2009),
24-28.(Russian).

\bibitem{Ne} E.Nelson, \textit{ Notes on non-commutative integration,} J. Funct. Anal. {\bf 15} (1974) 103-116.

\bibitem{Olesen} D.Olesen, \textit{Derivations of $AW^*$-algebras are
inner}, Pacific J. Math., {\bf 53} (1974), No.1, 555--561.

\bibitem {R-S} M. Reed, B. Simon, \textit{Methods of Modern Mathematical Physics. Vol. 1: Functional Analysis,} Academic Press, 1980.

\bibitem{Saito} K.Saito, \textit{On the algebra of measurable operators for a
$AW^*$-algebra, II}, Tohoku Math. J., {\bf 23} (1971), No. 3, 525-534.

\bibitem{Sak} S.Sakai, \textit{$C^{*}$--algebras and $W^{*}$--algebras},
Springer-Verlag, New York, 1971.

\bibitem{Sak1} S.Sakai, \textit{Operator algebras in dynamical systems. The theory
of unbounded derivations in  $C^{*}$--algebras}, Cambridge:
Cambridge University Press, 1991.

\bibitem{San}
S.Sankaran, \textit{ The $^{\ast}$-algebra of unbounded operators,}  J. London Math. Soc. {\bf 34}  (1959), 337--344.

\bibitem{Seg} I.E.Segal, \textit{A non-commutative extension of abstract
integration}, Ann. of Math. {\bf 57} (1953), 401--457.

\bibitem{Tak} M.Takesaki, \textit{Theory of operator algebras I}, New York,
Springer-Verlag, 1979.

\bibitem{Yead} F.J.Yeadon, \textit{Convergence of measurable operators},
Proc.~Camb.~Phil.~Soc. {\bf 74} (1973), 257--268.



\end{thebibliography}
\end{document}